\definecolor{ForestGreen}{RGB}{34,139,34}
\title[Period domains]{Period domains for gravitational instantons}
\author[T.-J. Lee]{Tsung-Ju Lee}
\email{tjlee@cmsa.fas.harvard.edu }
\address{Center of Mathematical Sciences and Applications, Harvard University, 
20 Garden Street, Cambridge, MA 02138}
\author[Y.-S. Lin]{Yu-Shen Lin}
\email{yslin@bu.edu}
\address{Department of Mathematics, Boston University, 111 Cummington Mall, Boston, MA 02215}
\keywords{Gravitational instantons, period domains, Torelli theorem}
\subjclass{53C26}
\theoremstyle{plain}
\newtheorem{thm}{Theorem}[section]
\newtheorem{prop}[thm]{Proposition}
\newtheorem{lem}[thm]{Lemma}
\theoremstyle{definition}
\newtheorem{defn}[thm]{Definition}
\newtheorem{rk}[thm]{Remark}
\numberwithin{thm}{section}
\numberwithin{equation}{section}
\setlist[itemize]{leftmargin=3em}
\newcommand{\be}{\begin{equation}}
\newcommand{\bea}{\begin{eqnarray}}
\newcommand{\eea}{\end{eqnarray}}
\newcommand{\ee}{\end{equation}}
\renewcommand{\leq}{\leqslant}
\renewcommand{\geq}{\geqslant}
\renewcommand{\le}{\leqslant}
\renewcommand{\ge}{\geqslant}
\renewcommand{\epsilon}{\varepsilon}
\renewcommand{\phi}{\varphi}
\begin{document}
\begin{abstract}
	Based on the uniformization theorems of gravitation instantons by Chen--Chen \cite{CC1}, Chen--Viaclovsky \cite{CV}, Collins--Jacob--Lin \cite{CJL3}, and Hein--Sun--Viaclovsky--Zhang \cite{HSVZ2}, we prove that the period maps for the 
  \(\mathrm{ALH}^{\ast}\), \(\mathrm{ALG}\), and \(\mathrm{ALG}^{\ast}\)
  gravitational instantons are surjective. In particular, the period domains of these gravitational instantons are exactly their moduli spaces. 
	 \end{abstract}
\maketitle
\tableofcontents

\section{Introduction}

Gravitational instantons, introduced by Hawking \cite{Haw} for his Euclidean quantum gravity theory, are defined as non-compact complete hyperK\"ahler $4$-manifolds with $L^2$ curvature tensors. From the viewpoint of differential geometry, gravitational instantons arise naturally as a bubbling limit of hyperK\"ahler metrics on \(K3\) surfaces \cites{Fos,HSVZ,CVZ}. Therefore, they can be viewed as the building blocks towards the understanding of $2$-dimensional Calabi--Yau metrics. 
The early discovered gravitational instantons are classified by their volume growths $r^4,r^3,r^2,r$. Those with volume growth $r^4$ are called locally asymptotically Euclidean (ALE), those with volume growth $r^3$ are called locally asymptotically flat (ALF) and the rest two are named 
$\mathrm{ALG}$ and $\mathrm{ALH}$ by induction. Later, Hein \cite{Hein} found two new types of gravitational instantons named as $\mathrm{ALG}^{\ast}$ and $\mathrm{ALH}^{\ast}$. The former has volume growth $r^2$, as the $\mathrm{ALG}$ gravitational instantons, but with a different curvature decay rate while the latter has volume growth $r^{4/3}$. Recently, Sun--Zhang \cite{SZ} used the Cheeger--Fukaya--Gromov theory to prove that any non-flat graviational instanton has a unique asymptotic cone and it must belong to one of the above six types. As a summary, there are six types of gravitational instatons in total: 
\(\mathrm{ALE},\mathrm{ALF},\mathrm{ALG}, \mathrm{ALH}, 
\mathrm{ALG}^{\ast}\), and \(\mathrm{ALH}^{\ast}\).

To further classify the gravitational instantons within each type, people are interested in the following questions:
\begin{enumerate}
  \item What are the possible diffeomorphism types of the gravitational instantons within each type?
  \item What are the possible cohomology classes of the hyperK\"ahler triples for a fixed diffeomorphism type of gravitational instantons?
  \item Does the cohomology classes of the hyperK\"ahler triple uniquely determine the gravitational instantons isometrically? 
\end{enumerate}
The set of possible cohomology classes supporting the hyperK\"ahler triples of gravitational instantons within a fixed diffeomorphism type is usually known as the period domain. The second question can be then rephrased as ``how to characterize the period domain of gravitational instantons within a fixed diffeomorphism type?'' The third question is usually known as the Torelli theorem of gravitational instantons. 

Kronheimer first answered all these questions for $\mathrm{ALE}$ gravitational instantons \cites{Kro,Kro2}. In which case, topologically, the underlying geometry always comes from the crepant resolution of the quotient of $\mathbb{C}^2$ by a finite subgroup of $\mathrm{SU}(2)$. Any triple in $\mathrm{H}^2(X,\mathbb{R})$ can be realized as the cohomology classes of the hyperK\"ahler triples when they do not vanish simultaneously on the $(-2)$-classes in $\mathrm{H}_2(X,\mathbb{Z})$.
Moreover, Kronheimer established a Torelli-type theorem for $\mathrm{ALE}$ gravitational instantons. The analogue theorem for $\mathrm{ALF}$ gravitational instantons has been established by Chen--Chen \cite{CC2}.
For the rest of gravitational instantons, the first question is answered by certain ``uniformization theorems'' (see Section \ref{sec: uniform}): for any gravitational instantons of types $\mathrm{ALG},\mathrm{ALH},\mathrm{ALG}^{\ast},\mathrm{ALH}^{\ast}$, up to a suitable hyperK\"ahler rotation they can be compactified to rational elliptic surfaces by filling in a fibre with monodromy of finite order, smooth fibre, an $\mathrm{I}_k^{\ast}$-fibre or an $\mathrm{I}_k$-fibre respectively \cites{Hein, CC1, CJL, CJL3,HSVZ2}. In particular, there are finitely many diffeomorphism types of the gravitational instantons from the classification of singular fibres of rational elliptic surfaces of Perrson \cite{Per}. The Torelli-type theorems for these gravitational instantons are also established: the $\mathrm{ALH}$ case by Chen--Chen \cite{CC3}, the $\mathrm{ALG}$ and $\mathrm{ALG}^{\ast}$ cases by Chen--Viaclovsky--Zhang \cite{CVZ2} and the $\mathrm{ALH}^{\ast}$ case by the second author with Collins and Jacob \cite{CJL3}. While the questions about characterizations of period domains of gravitational instantons remain open, it is observed that not all the cohomology classes can be realized as those of the hyperK\"ahler triples of gravitational instantons - there are some obvious topological constraints: those homology classes with self-intersection $-2$ can be realized as holomorphic curves after a suitable hyperK\"ahler rotation and particularly the corresponding K\"ahler form can not vanish on it\footnote{If a \((-2)\) class vanishes on the hyperk\"{a}hler triple, then, up to a hyperk\"{a}hler rotation, it can be realized a \((-2)\) curve. We can contract it to get an orbifold. In which case, the Calabi--Yau metric should be replaced by the orbifold Calabi--Yau metric.}. Subsequently, Chen--Viaclovsky--Zhang \cite{CVZ2} conjectured that given a diffeomorphism type of $\mathrm{ALG}$ or $\mathrm{ALG}^{\ast}$ gravitational instanton, any cohomology classes of hyperk\"ahler triples on which do not vanish simultaneously can be realized by a gravitational instanton. One can make a similar conjectural statement for the $\mathrm{ALH}^{\ast}$ gravitational instantons. 

The goal of this manuscript is to study these conjectures. 
Let us outline the organization of this manuscript and, in the meanwhile, briefly explain the idea of the proof of the conjecture for the $\mathrm{ALH}^{\ast}$ case since the ideas for the other two cases are pretty much similar. We treat
\(\mathrm{ALH}^{\ast}\) gravitational instantons in \S\ref{sec: uniform}
and \(\mathrm{ALG}\) as well as \(\mathrm{ALG}^{\ast}\) gravitational instantons
in \S\ref{sec:alg}. 
In \S\ref{subsec:weak-del-pezzo-surfaces}, we recall some basics about pairs \((Y,D)\) with \(Y\) a (weak) del Pezzo surface and \(D\in |-K_{Y}|\) smooth, and the fact that for such a pair \((Y,D)\) the complement \(Y\setminus D\) can support $\mathrm{ALH}^{\ast}$ gravitational instantons. In \S\ref{subsec:construction-of-m-log-CY}--\S\ref{subsec:alh*-period}, we construct pairs \((Y,D)\) to realize cohomology classes in \(\mathrm{H}^{2}(X_{\mathfrak{r}},\mathbb{C})\) of a reference \(\mathrm{ALH}^{\ast}\) gravitational instanton \(X_{\mathfrak{r}}\) as the cohomology classes of the $(2,0)$-form \(\Omega\) on \(X=Y\setminus D\). We also show that any cohomology class 
which is positive on every holomorphic curve in $X$ supports a Ricci-flat metric asymptotic to Calabi ansatz and thus gives an $\mathrm{ALH}^{\ast}$ gravitational instanton. In \S\ref{subsec:mono}, we demonstrate how to use monodromy transformations to reduce all the other cases to the previous one. 
Finally we give a complete proof of the surjectivity of the period map in \S\ref{subsec:alh*-surj}. 
In \S\ref{subsec:construction}, we construct \(\mathrm{ALG}\) and \(\mathrm{ALG}^{\ast}\) pairs \((Y,D)\)
to realize cohomology classes in \(\mathrm{H}^{2}(X_{\mathfrak{r}},\mathbb{C})\) of the complement \(X_{\mathfrak{r}}=Y_{\mathfrak{r}}\setminus D_{\mathfrak{r}}\) of a reference \(\mathrm{ALG}\) or \(\mathrm{ALG}^{\ast}\) pair \((Y_{\mathfrak{r}},D_{\mathfrak{r}})\) as the cohomology classes of the $(2,0)$-form on \(X=Y\setminus D\). In \S\ref{subsec:alg-alg*-gra-ins}, we recall some basics of \(\mathrm{ALG}\) and \(\mathrm{ALG}^{\ast}\) gravitational instantons, including the definition of the period maps as well as the uniformization theorem. Finally in \S\ref{subsec:sur-alg-alg*}, we prove the surjectivity of the period maps. To sum up,
\begin{thm}(=Theorem \ref{main1} and Theorem \ref{main2})
	The period maps for $ALH^*/ALG/ALG^*$ gravitational instantons are all surjective. 
\end{thm}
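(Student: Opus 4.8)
The plan is to prove the two theorems by a common three-step scheme, spelled out for $\mathrm{ALH}^{\ast}$ and run for the $\mathrm{ALG}/\mathrm{ALG}^{\ast}$ cases in parallel. Fix a diffeomorphism type and a target period point: concretely, on a reference manifold $X_{\mathfrak{r}}$ this is a triple of cohomology classes satisfying the hyperK\"ahler compatibility relations (at the level of intersection numbers) together with the single nondegeneracy constraint from the introduction, namely that the triple does not vanish simultaneously on any $(-2)$-class. Using the $\mathrm{SO}(3)$ hyperK\"ahler-rotation action on the period domain, we would first normalize the target so that two of the three classes are the real and imaginary parts of the period of a holomorphic $2$-form $\Omega$ for a complex structure under which $X_{\mathfrak{r}}$, after the uniformization recalled in \S\ref{sec: uniform}, compactifies to a (weak) del Pezzo surface $Y$ with a smooth anticanonical divisor $D$ removed (respectively to a rational elliptic surface with a fibre removed, in the $\mathrm{ALG}/\mathrm{ALG}^{\ast}$ cases).

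The first step, to be carried out in \S\ref{subsec:construction-of-m-log-CY}--\S\ref{subsec:alh*-period} and \S\ref{subsec:construction}, is purely algebro-geometric: using a Torelli-type surjectivity statement for anticanonical pairs $(Y,D)$ --- equivalently, by choosing which points to blow up and with what moduli --- we construct $(Y,D)$ so that the periods of $\Omega$ on $X=Y\setminus D$ are exactly the two normalized classes. The second step realizes the remaining class $\alpha$ as a K\"ahler class: by the nondegeneracy constraint $\alpha$ is positive on every holomorphic curve of $X$, and the key point will be that such a class is represented by a genuine K\"ahler form on $X$ asymptotic to the relevant model metric (the Calabi ansatz for $\mathrm{ALH}^{\ast}$, the standard models otherwise). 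Feeding this K\"ahler form into the existence theorem for complete Ricci-flat K\"ahler metrics with these asymptotics --- the analytic input of \cite{CJL3}, \cite{CV}, \cite{HSVZ2} --- produces a gravitational instanton of the prescribed type whose period point, after undoing the initial rotation, is the one we started with.

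These two steps produce only the period points lying in the chamber cut out by the chosen marking of $(Y,D)$. The third step, \S\ref{subsec:mono} and \S\ref{subsec:sur-alg-alg*}, removes this restriction: the monodromy group of the family --- generated by the geometric monodromies around the boundary strata of the moduli space together with the Weyl group of $(-2)$-classes --- acts on the period domain, and one shows that every admissible period point is carried into the distinguished chamber by such a transformation. Since monodromy transformations are induced by diffeomorphisms of $X_{\mathfrak{r}}$, applying the inverse transformation to the instanton produced in the second step gives one with the originally prescribed period point, and surjectivity follows.

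The hard part will be the K\"ahler-class realization in the second step: passing from the combinatorial condition ``positive on every holomorphic curve'' to the existence of an honest K\"ahler form in the prescribed cohomology class that is close enough to the model metric near $D$ (resp.\ near the removed fibre) to be admissible for the complex Monge--Amp\`ere machinery. This will require a Nakai--Moishezon-type argument on the compactification --- complicated by the non-compactness, by the presence of the boundary divisor, and, in the weak del Pezzo or degenerate cases, by the $(-2)$-curves that may appear --- together with an explicit modification near infinity of a model K\"ahler form. The monodromy bookkeeping in the third step is a second delicate point: one must check that the chamber decomposition of the period domain is exactly the one generated by the monodromy group, so that no admissible class is left outside the union of the translates of the distinguished chamber.
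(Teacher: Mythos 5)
Your strategy matches the paper's in all essentials: realize the holomorphic period by explicitly constructing the pairs $(Y,D)$ as blow-ups of $\mathbf{P}^2$ at points on a (possibly singular) cubic with the periods computed by residues, realize the K\"ahler class via a Nakai--Moishezon-type lifting to the compactification followed by Tian--Yau (resp.\ Hein) and note that $[\Omega]$ automatically vanishes on holomorphic $(-2)$-curves so the nondegeneracy condition forces $[\omega]$ to be nonzero there, and then use Weyl-group reflections realized as geometric monodromies of the family of pairs to move $[\omega]$ into the positive chamber while fixing $[\Omega]$. The one discrepancy is your proposed initial $\mathrm{SO}(3)$ hyperK\"ahler-rotation normalization: it is unnecessary and, as stated, would leave the period domain, since the domains here are already defined relative to the fixed asymptotic complex structure (e.g.\ $[\Omega]\cdot[F]=0$ while $[\omega]\cdot[F]=L^{2}\operatorname{Im}(\tau)>0$ in the $\mathrm{ALG}$ case, and $[\omega]$ vanishes on $\operatorname{Im}(\mathrm{H}^1(D_{\mathfrak{r}},\mathbb{Z}))$ in the $\mathrm{ALH}^{\ast}$ case), so no rotation step appears or is needed in the paper's argument.
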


At the moment when this manuscript was about to be finished, Chen--Viaclovsky--Zhang had a different proof for the conjecture in the cases of $\mathrm{ALG}$ in the second version of their preprint \cite{CVZ2}. On the other hand, it is conjectured that certain gauge theory moduli spaces constructed in Biquard--Boalch \cite{BB} and Cherkis--Kapustin \cite{CK} will achieved all possible periods and known as the modularity conjecture. We will refer the readers to Mazzeo--Fridrickson--Swoboda--Weiss for the progress along this line, which would eventually lead to a different proof of the surjectivity of period maps in the cases of $\mathrm{ALG}$ and $\mathrm{ALG}^{\ast}$. 

\quad\\

\noindent {\bf Acknowledgements.} The second author would like to thank R.~Zhang for bring the problem to his attention. The second author would also like to thank G.~Chen, T.~Collins, A.~Jacob, J.~Viaclovsky, and R.~Zhang for some related discussions. The authors are grateful to S.-T.~Yau for his interest and steadily encouragement. We would like to thank CMSA for providing a good environment for discussions. We would like to thank anonymous referees for their careful reading and valuable comments. The first author is supported by the Simons Collaboration on HMS Grant and the AMS--Simons Travel Grant (2020--2023). The second author is supported by Simons collaboration grant \# 635846 and NSF grant DMS \#2204109.

\section{Period domains of \texorpdfstring{$\mathrm{ALH}^{\ast}$}{ALH*} gravitational instantons} 
\label{sec: uniform}

\subsection{Weak del Pezzo surfaces}
\label{subsec:weak-del-pezzo-surfaces}
A rational surface $Y$ is a weak del Pezzo surface if its anti-canonical divisor \(-K_{Y}\) 
is big and nef. From the classification of compact complex surfaces, one has 
\begin{prop}
\label{prop:weak-del-pezzo}
	Weak del Pezzo surfaces are either blow-up of $\mathbf{P}^2$ at generic $b=9-d$ points with $1\leq d\leq 9$ or
	$\mathbf{P}^1\times \mathbf{P}^1$ or the Hirzebruch surface $\mathbb{F}_2$. Here generic configuration means 
	\begin{itemize}
		\item all points are proper (no multiplicity higher than \(2\));
		\item no three points are on a line;
		\item no six points are on a conic;
		\item no cubic passes through the points with one of 
		them being a singular point of that cubic.
	\end{itemize}	
\end{prop}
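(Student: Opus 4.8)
This is a classical consequence of the Enriques--Kodaira classification together with the structure theory of rational surfaces, and the plan is to run the minimal model program on $Y$ while keeping track of the weak del Pezzo condition, then read off the blow-up description. First I would record the numerics: since $Y$ is rational, Noether's formula gives $K_Y^2 + b_2(Y) = 10$, and since $-K_Y$ is big and nef we have $d := (-K_Y)^2 > 0$, so $1 \le d \le 9$; moreover $d = 9$ forces $b_2(Y) = 1$, hence $Y \cong \mathbf{P}^2$. So from now on $d \le 8$.

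Next I would verify that the weak del Pezzo property descends under Castelnuovo contractions. If $Y$ is not minimal, contract a $(-1)$-curve $E$, say $\pi\colon Y \to Y'$. Using $K_Y = \pi^{*}K_{Y'} + E$ and the projection formula, for every irreducible $C' \subset Y'$ with strict transform $\widetilde{C'} \ne E$ one gets $-K_{Y'}\cdot C' = (-K_Y + E)\cdot\widetilde{C'} = -K_Y\cdot\widetilde{C'} + E\cdot\widetilde{C'} \ge 0$, so $-K_{Y'}$ is nef; and $K_{Y'}^2 = K_Y^2 + 1 > 0$ makes it big. Thus $Y'$ is again a weak del Pezzo, and by induction on $b_2(Y)$ I am reduced to the case $Y$ minimal rational. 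By the classification of minimal rational surfaces, $Y \cong \mathbf{P}^2$ or $Y \cong \mathbb{F}_n$ with $n \ge 0$, $n \ne 1$. The surfaces $\mathbb{F}_n$ with $n \ge 3$ are ruled out, since the negative section $C_0$ satisfies $C_0^2 = -n$ and $-K_{\mathbb{F}_n}\cdot C_0 = 2 - n < 0$, contradicting nefness; this leaves the minimal possibilities $\mathbf{P}^2$, $\mathbf{P}^1\times\mathbf{P}^1 = \mathbb{F}_0$, and $\mathbb{F}_2$ (while $\mathbb{F}_1 = \mathrm{Bl}_1\mathbf{P}^2$, being non-minimal, will be subsumed in the blow-up case below).

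To handle the non-minimal case I would show it is always a blow-up of $\mathbf{P}^2$, not merely of $\mathbb{F}_0$ or $\mathbb{F}_2$, via an elementary transformation: blowing up $\mathbb{F}_0$ at any point, or $\mathbb{F}_2$ at a point off its $(-2)$-section $C_0$ (a point on $C_0$ would produce a $(-3)$-curve, destroying nefness), and then contracting the strict transform of the fiber through that point yields $\mathbb{F}_1 \cong \mathrm{Bl}_1\mathbf{P}^2$. Hence a non-minimal weak del Pezzo is $\mathbf{P}^2$ blown up at a sequence of $b$ (possibly infinitely near) points, and since each blow-up decreases $K^2$ by one from $K_{\mathbf{P}^2}^2 = 9$ we get $b = 9 - d$. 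Together with the minimal list this yields the stated trichotomy, $\mathbf{P}^2$ itself being the case $b = 0$.

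It remains to identify the ``generic configuration'' with nefness of $-K_Y$ for $Y = \mathrm{Bl}_{p_1,\dots,p_b}\mathbf{P}^2$; bigness is automatic since $(-K_Y)^2 = 9 - b = d \ge 1$. By adjunction, $-K_Y$ fails to be nef exactly when $Y$ contains an irreducible curve $C$ with $-K_Y\cdot C = C^2 + 2 < 0$, i.e. a smooth rational curve of self-intersection $\le -3$, and such a curve can only come from one of a short list of configurations: the strict transform of a line, a conic, or a cubic with a prescribed node, passing through too many of the $p_i$, or a ``satellite'' tower of infinitely near points whose exceptional curve accumulates self-intersection $\le -3$. Forbidding all of these is precisely the almost-general-position requirement recorded in the statement (all points proper with no multiplicity $>2$; no three collinear; no six on a conic; no cubic singular at one of them). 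The one step requiring genuine care, and the main obstacle, is the \emph{completeness} of this list: that once these configurations are excluded there is no irreducible curve of self-intersection $\le -3$ whatsoever. I would deduce this from the classical root-system structure of $K_Y^{\perp} \subset \mathrm{Pic}(Y)$ for $b \le 8$ --- finitely many $(-1)$- and $(-2)$-classes, on which the Weyl group $W(E_b)$ acts --- which lets one move an arbitrary putative bad class to one of the listed shapes; carrying out this reduction carefully in the presence of infinitely near points is the technical heart of the argument.
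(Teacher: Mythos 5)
The paper does not actually prove this proposition --- it is stated as a known consequence of the classification of compact complex surfaces (the standard references, Demazure's \emph{Surfaces de del Pezzo}, Coray--Tsfasman, and Dolgachev's book, are all in the bibliography) --- so there is no in-paper argument to compare against. Your proof is the standard one and the steps you carry out in detail are correct: the Noether-formula numerics, the descent of bigness and nefness under contraction of a $(-1)$-curve (the computation $-K_{Y'}\cdot C'=-K_Y\cdot\widetilde{C'}+E\cdot\widetilde{C'}\ge 0$ is right), the exclusion of $\mathbb{F}_n$ for $n\ge 3$ via $-K\cdot C_0=2-n$, and the elementary-transformation trick reducing the non-minimal case to a blow-up of $\mathbf{P}^2$ (correctly noting that blowing up a point of the $(-2)$-section of $\mathbb{F}_2$ would create a $(-3)$-curve upstairs). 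The only place where you assert rather than prove is the final equivalence: first, that an irreducible curve $C$ with $-K\cdot C<0$ must be a \emph{smooth rational} curve with $C^2\le -3$ --- effectivity of $-K$ (from Riemann--Roch) forces $C$ to be a fixed component and hence $C^2<0$, but ruling out $p_a(C)\ge 1$ genuinely uses $b\le 8$ (a short Cauchy--Schwarz/Hodge-index computation on $aH-\sum m_iE_i$, or the finiteness of the root system $K^{\perp}$); and second, the completeness of the list of forbidden configurations, which you correctly identify as the technical heart and defer to the $W(E_b)$-action. Since these are exactly the classical facts the paper is itself taking for granted, your proposal is an adequate and essentially complete justification of the statement.
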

From the above proposition, any holomorphic curve in a weak del Pezzo surface has self-intersection at least $-2$. 
The self-intersection number $d=(-K_Y)^2$ is the 
\emph{degree} of the weak del Pezzo surface $Y$. Every weak del Pezzo surface 
admits a smooth anti-canonical divisor.
Thus, there are in total $10$ deformation families of pairs consisting 
of a weak del Pezzo surface and a smooth anti-canonical divisor: one deformation family for each $d\neq 8$ and two for $d=8$. 
Notice that the Hirzebruch surface $\mathbb{F}_2$ is in 
the deformation family of $\mathbf{P}^1\times \mathbf{P}^1$. 
For notational simplicity, we shall denote the 
degree of $\mathbf{P}^1\times \mathbf{P}^1$ or $\mathbb{F}_2$ by $d=8'$.

To describe the period domains of $\mathrm{ALH}^{\ast}$ gravitational instantons,
we need to compute $\mathrm{H}^2(X,\mathbb{Z})$ and $\mathrm{H}^2(Y,\mathbb{Z})$. 
We use the long exact sequence of the pair 
$(Y,D)$ (cf.~\cite{Lo}*{\S I.5.1}),
\begin{equation}
\label{eq:long-exact-sequence}
0\to \mathrm{H}^{1}(D,\mathbb{Z})\to
\mathrm{H}_{2}(X,\mathbb{Z})\to \mathrm{H}_{2}(Y,\mathbb{Z})
\to \mathrm{H}^{2}(D,\mathbb{Z})\to \mathrm{H}_{1}(X,\mathbb{Z})\to 0.
\end{equation} Notice that $\mathrm{H}_1(X,\mathbb{Z})$ is torsion and 
in particular \(\operatorname{rank}_{\mathbb{Z}}\mathrm{H}_{2}(X,\mathbb{Z})=11-d\) is determined by
the degree of the weak del Pezzo surface $Y$. 
The connecting homomorphism \(\mathrm{H}_{2}(Y,\mathbb{Z})\to
\mathrm{H}^{2}(D,\mathbb{Z})\) in 
\eqref{eq:long-exact-sequence}
is identified with the signed intersection
\begin{equation*}
\varphi_{[D]}\colon [C]\mapsto ([D]\mapsto [C]\cdot[D])
\end{equation*}
and we obtain a short exact
\begin{equation}
\label{eq:homology-of-x}
0\to \mathrm{H}^{1}(D,\mathbb{Z})\to
\mathrm{H}_{2}(X,\mathbb{Z})\to \mathrm{ker}(\varphi_{[D]})\to 0
\end{equation} 
where \(\varphi_{[D]}\) denotes the signed intersection map.
Via Poincar\'{e} duality, we can further identify \(\mathrm{ker}(\varphi_{[D]})\)
with $[D]^{\perp}$, the subgroup of $\operatorname{Pic}(Y)$ with zero pairing with the Poincar\'{e} dual of $[D]$. 

It is known that the middle cohomology group of a smooth weak del Pezzo
surface is isomorphic to either \(\mathbb{Z}^{1,9-d}\) or \(\mathrm{U}_{2}\)
(the hyperbolic lattice of rank two). 
Let \(Y\) be a weak del Pezzo surface of degree \(d\ne 8'\)
and let \(\pi\colon Y\to\mathbf{P}^{2}\) be a blow-up (at \(b=9-d\) points)
realization of \(Y\).
Denote by \(E_{1},\ldots,E_{b}\) the exceptional divisors of \(\pi\)
and by \(H\) the hyperplane in \(\mathbf{P}^{2}\).
Then the assignments 
\(e_{0}\mapsto [H]\) and \(e_{i}\mapsto [E_{i}]\) 
(the pullbacks are omitted)
give rise to an isomorphism of lattices
\(\mathbb{Z}^{1,b}\to \mathrm{H}^{2}(Y,\mathbb{Z})\).
The anti-canonical divisor of \(Y\) is linearly equivalent to
\(3H-E_{1}-\cdots-E_{b}\).
Moreover,
\begin{equation*}
\{H-3E_{1},E_{i}-E_{i+1},~i=1,\ldots,b-1\}
\end{equation*}
is a basis of \([D]^{\perp}\) with \(D\in |-K_{Y}|\).

If \(Y\) is such that \(d=8'\),
it is straightforward to check that \(\mathrm{H}^{2}(Y,\mathbb{Z})\cong\mathrm{U}_{2}\)
under the basis \(\{[\ell_{1}],[\ell_{2}]\}\)
where \(\ell_{i}\)'s are (parallel transport of) the rulings
in \(\mathbf{P}^{1}\times\mathbf{P}^{1}\). 
The anti-canonical divisor is linearly equivalent to \(2[\ell_{1}]+2[\ell_{2}]\) and
\([D]^{\perp}\cong\langle \ell_{1}-\ell_{2}\rangle\)
for \(D\in |-K_{Y}|\).

\subsection{Constructions of \texorpdfstring{\((Y,D)\)}{(Y,D)} for
\texorpdfstring{\(\mathrm{ALH}^{\ast}\)}{ALH*} gravitational instantons}
\label{subsec:construction-of-m-log-CY}
The purpose of this subsection is to 
construct reference marked log Calabi--Yau pairs
coming from (weak) del Pezzo surfaces.
By a \emph{marked log Calabi--Yau
pair} we mean a log Calabi--Yau pair \((Y,D)\)
together with a basis \(\mathcal{B}\) of \(\mathrm{H}_{2}(X,\mathbb{Z})\)
with \(X:=Y\setminus D\), called the 
\emph{distinguished basis}.
We will treat the cases \(1\le d\le 9\) and \(d=8'\) separately.

We now construct a
marked log Calabi--Yau pair
\((Y_{\mathfrak{r},d},D_{\mathfrak{r},d})\) 
for each \(1\le d\le 9\) (\(d\ne 8'\))
where \(Y_{\mathfrak{r},d}\) is 
a smooth del Pezzo surface of degree \(d\) and 
\(D_{\mathfrak{r},d}\) is a smooth anti-canonical
divisor.

For \(d=9\), we simply take \(Y_{\mathfrak{r},9}=\mathbf{P}^{2}\)
and \(D_{\mathfrak{r},9}\) to be a smooth elliptic curve. 
From the long exact sequence of compactly supported cohomology
\begin{equation}
0\to \mathrm{H}^{1}_{\mathrm{c}}(D_{\mathfrak{r},9},\mathbb{Z})\to 
\mathrm{H}^{2}_{\mathrm{c}}(X_{\mathfrak{r},9},\mathbb{Z})\to
[D_{\mathfrak{r},9}]^{\perp}=\{0\},
\end{equation}
we have an isomorphism
\begin{equation}
\label{eq:delta-s1-bundles}
\mathrm{H}^{1}_{\mathrm{c}}(D_{\mathfrak{r},9},\mathbb{Z})\xrightarrow{\delta}
\mathrm{H}_{\mathrm{c}}^{2}(X_{\mathfrak{r},9},\mathbb{Z})
\end{equation}
which, under Poincar\'{e} duality, is identified with ``taking an \(S^{1}\)-bundle.''
\(\delta\) is also known as the Leray coboundary map.

Choose a symplectic basis \(\{\alpha_{\mathfrak{r}},\beta_{\mathfrak{r}}\}\)
of \(\mathrm{H}_{1}(D_{\mathfrak{r},9},\mathbb{Z})\cong
\mathrm{H}^{1}_{\mathrm{c}}(D_{\mathfrak{r},9},\mathbb{Z})\) and denote their image in 
\(\mathrm{H}_{2}(X_{\mathfrak{r},9},\mathbb{Z})\) by the same notation.
Then \((Y_{\mathfrak{r},9},D_{\mathfrak{r},9})\) and \(\mathcal{B}_{\mathfrak{r},9}
=\{\alpha_{\mathfrak{r}},\beta_{\mathfrak{r}}\}\) 
form our reference marked log Calabi--Yau pair in degree \(9\).

To continue,
we pick \(8\) distinct points \(q_{\mathfrak{r},1},\ldots,q_{\mathfrak{r},8}\in 
D_{\mathfrak{r},9}\). 
For the case \(d= 8\), we take \(Y_{\mathfrak{r},8}=
\mathrm{Bl}_{\{q_{\mathfrak{r},1}\}}\mathbf{P}^{2}\)
and \(D_{\mathfrak{r},8}\) to be the proper transform of \(D_{\mathfrak{r},9}\). 
Notice that \(D_{\mathfrak{r},8}\in |-K_{Y_{\mathfrak{r},8}}|\) 
since \(q_{\mathfrak{r},1}\) belongs to \(D_{\mathfrak{r},9}\).
Put \(X_{\mathfrak{r},8}:=Y_{\mathfrak{r},8}\setminus D_{\mathfrak{r},8}\) as before. Since 
\(D_{\mathfrak{r},8}\cong D_{\mathfrak{r},9}\), 
we can still (and should) use 
\(\{\alpha_{\mathfrak{r}},\beta_{\mathfrak{r}}\}\) as our basis
of \(\mathrm{H}^{1}_{\mathrm{c}}(D_{\mathfrak{r},8},\mathbb{Z})\). 
Denote their image in \(\mathrm{H}_{2}(X_{\mathfrak{r},8},\mathbb{Z})\) by the same notation.
Moreover, \([D_{\mathfrak{r},8}]^{\perp}=[3H-E_{\mathfrak{r},1}]^{\perp}
\cong \langle H-3E_{\mathfrak{r},1}
\rangle_{\mathbb{Z}}\) (\(E_{\mathfrak{r},1}\) is the exceptional divisor
over \(q_{\mathfrak{r},1}\)).
We fix once for all a lifting 
\(\gamma_{\mathfrak{r},1}\in \mathrm{H}_{2}(X_{\mathfrak{r},8},\mathbb{Z})\) 
of \(H-3E_{\mathfrak{r},1}\)
and therefore we achieve a distinguished 
basis \(\mathcal{B}_{\mathfrak{r},8}=
\{\alpha_{\mathfrak{r}},\beta_{\mathfrak{r}},\gamma_{\mathfrak{r},1}\}\) 
of \(\mathrm{H}_{2}(X_{\mathfrak{r},8},\mathbb{Z})\).

We can construct reference marked log Calabi--Yau pairs inductively. 
For the degree \(d\) model \((Y_{\mathfrak{r},d},D_{\mathfrak{r},d})\), 
we blow-up our degree \(d+1\) model \(Y_{\mathfrak{r},d+1}\)
at (the proper transform of) \(q_{\mathfrak{r},b}\)
and we set \(D_{\mathfrak{r},d}\) to be 
the proper transform of \(D_{\mathfrak{r},d+1}\). 
In the present case,
\begin{equation}
[D_{\mathfrak{r},d}]^{\perp} = 
\langle H-3E_{\mathfrak{r},1},
E_{\mathfrak{r},1}-E_{\mathfrak{r},2},
\ldots,E_{\mathfrak{r},b-1}-E_{\mathfrak{r},b}\rangle_{\mathbb{Z}}.~
(\mbox{Recall that \(b=9-d\)}.)
\end{equation}
Here the pullback is omitted.
We may choose the liftings 
\(\gamma_{\mathfrak{r},1},\ldots,\gamma_{\mathfrak{r},b}\in
\mathrm{H}^{2}(X_{\mathfrak{r},d},\mathbb{Z})\) 
in a such way that they are identified with the corresponding 
elements in the distinguished basis \(\mathcal{B}_{\mathfrak{r},d+1}\)
in the degree \(d+1\) model under the blow-up
\(Y_{\mathfrak{r},d}\to Y_{\mathfrak{r},d+1}\).
Then \((Y_{\mathfrak{r},d},D_{\mathfrak{r},d})\) and the basis
\(\{\alpha_{\mathfrak{r}},\beta_{\mathfrak{r}},\gamma_{\mathfrak{r},1},
\ldots,\gamma_{\mathfrak{r},b}\}\)
of \(\mathrm{H}_{2}(X_{\mathfrak{r},d},\mathbb{Z})\)
give our degree \(d\) model.

For \(d=8'\), we begin with \(\mathbf{P}^{2}\) and a 
smooth elliptic curve \(E\subset\mathbf{P}^{2}\). Pick 
\(p_{\mathfrak{r}},q_{\mathfrak{r}}\in E\) 
such that \(L:=\overline{p_{\mathfrak{r}}q_{\mathfrak{r}}}\) intersects \(E\)
transversally and consider the blow-up
\begin{equation*}
\begin{tikzcd}
&\mathrm{Bl}_{\{p_{\mathfrak{r}},q_{\mathfrak{r}}\}}
\mathbf{P}^{2}\ar[r,"\pi"]&\mathbf{P}^{2}.
\end{tikzcd}
\end{equation*}
Denote by \(E_{\mathfrak{r},p}\) and \(E_{\mathfrak{r},q}\) the exceptional divisors
over \(p_{\mathfrak{r}}\) and \(q_{\mathfrak{r}}\). 
The proper transform \(\bar{L}\) of \(L\) becomes a \((-1)\) curve. We
can contract \(\bar{L}\) and obtain a blow-down \(\rho\colon 
\mathrm{Bl}_{\{p_{\mathfrak{r}},q_{\mathfrak{r}}\}}\mathbf{P}^{2}\to Y\) to a smooth
projective surface \(Y_{\mathfrak{r}}\). 
\begin{equation*}
\begin{tikzcd}
&\mathrm{Bl}_{\{p_{\mathfrak{r}},q_{\mathfrak{r}}\}}
\mathbf{P}^{2}\ar[d,"\rho"]\ar[r,"\pi"]&\mathbf{P}^{2}\\
& Y_{\mathfrak{r}}
\end{tikzcd}
\end{equation*}
By surface classification, we have \(Y_{\mathfrak{r}}
\cong\mathbf{P}^{1}\times\mathbf{P}^{1}\)
and \(\rho(E_{\mathfrak{r},p})\) and \(\rho(E_{\mathfrak{r},q})\) are the rulings.
Put \(Y_{\mathfrak{r},8'}=Y_{\mathfrak{r}}\). 
The proper transform \(\bar{E}\) of \(E\) projects down
to a smooth elliptic curve \(D_{\mathfrak{r},8'}\) and
we put \(X_{\mathfrak{r},8'}=
Y_{\mathfrak{r},8'}\setminus D_{\mathfrak{r},8'}\).
Again we fix a symplectic basis \(\{\alpha_{\mathfrak{r}},
\beta_{\mathfrak{r}}\}\) of
\(\mathrm{H}_{1}(D_{\mathfrak{r},8'},\mathbb{Z})\)
and denote their images in 
\(\mathrm{H}_{2}(X_{\mathfrak{r},8'},\mathbb{Z})\) by the same notation.
In this case, \([D_{\mathfrak{r},8'}]^{\perp}\) is 
generated by the difference of the rulings and 
we shall again fix once for all a lifting \(\gamma_{\mathfrak{r}}\) of
\(\rho_{\ast}([E_{q}]-[E_{p}])\) so that
\(\{\alpha_{\mathfrak{r}},\beta_{\mathfrak{r}},\gamma_{\mathfrak{r}}\}\) is our distinguished basis.

\subsection{\texorpdfstring{$\mathrm{ALH}^{\ast}$}{ALH*} gravitational instantons}
\label{subsec:alh*-gravitational-instantons}
$\mathrm{ALH}^{\ast}$ gravitational instantons intuitively are the gravitational instantons which asymptotics to Calabi ansatz. We first explain the construction of Calabi ansatz. Let $D$ be an elliptic curve and $p\colon L\rightarrow D$ be a positive line bundle of degree $d$. Let $Y_{\mathcal{C}}$ be the total space of $L$ with projection $\pi_{\mathcal{C}}\colon Y_{\mathcal{C}}\rightarrow D$. Let $X_{\mathcal{C}}$ be the complement of the zero section in $Y_{\mathcal{C}}$. Let $h$ be the unique hermitian metric on $L$ whose curvature form is $\omega_D$ with the normalization $\int_D \omega_D=2\pi d$. If $z$ is the coordinate on $D$ and $\xi$ is a local trivialization of $L$, we get coordinates on $L$ via $(z,w)\mapsto(z,w\xi)$. The Calabi ansatz is then given by 
\begin{align*}
\omega_{\mathcal{C}}=\frac{2ic}{3}\partial \bar{\partial}\left( -\log{|\xi|^2_h} \right)^{\frac{3}{2}},\qquad \Omega_{\mathcal{C}}=c\pi_{\mathcal{C}}^*\Omega_D\wedge \frac{\mathrm{d}w}{w},
\end{align*} where $c$ is any positive real number and $\Omega_D$ is a holomorphic volume form such that
\begin{align*}
\frac{i}{2}\int_{D} \frac{\Omega_D}{2\pi i}\wedge \overline{\bigg(\frac{\Omega_D}{2\pi i}\bigg)}=2\pi d.
\end{align*}
It is straightforward to check that $(\omega_{\mathcal{C}},\Omega_{\mathcal{C}})$ is a hyperK\"ahler triple, i.e., $2\omega_{\mathcal{C}}^2=\Omega_{\mathcal{C}}\wedge \bar{\Omega}_{\mathcal{C}}$. 

\begin{defn}\label{def: ALH*} Given $d\in \mathbb{N},\tau\in \mathfrak{h}/\mbox{SL}(2,\mathbb{Z}), c>0$.
 An $\mathrm{ALH}^{\ast}$ gravitational instanton (of type $(d,\tau,c)$) is a triple $(X,\omega,\Omega)$, 
 where $X$ is a non-compact complete hyperK\"ahler $4$-manifold with a K\"ahler form $\omega$, 
 and a holomorphic volume form $\Omega$ such that 
  \begin{enumerate}
  	\item $2\omega^2=\Omega\wedge \bar{\Omega}$ and 
  	\item there exists a compact set $K\subseteq X$, an $\epsilon>0$ and a diffeomorphism 
	$F\colon X_{\mathcal{C}}\cong X\setminus K$ such that 
  	 \begin{align*}
  	  \|F^{\ast}\omega-\omega_{\mathcal{C}}\|_{g_{\mathcal{C}}}=O(r^{-k-\epsilon}), 
	  ~\quad~\|F^{\ast}\Omega-\Omega_{\mathcal{C}}\|_{g_{\mathcal{C}}}=O(r^{-k-\epsilon}), 
  	 \end{align*} where $r$ is the distance to a fixed point in $X_{\mathcal{C}}$. 
  \end{enumerate}
\end{defn}	
\begin{rk}
\label{rk:def-notation}
	\begin{enumerate}
		\item
   From \eqref{eq:long-exact-sequence}, 
   let $\alpha,\beta\in \operatorname{Im}(\mathrm{H}^1(D,\mathbb{Z})\to \mathrm{H}_{2}(X,\mathbb{Z}))$ 
   be the image of an oriented basis of $\mathrm{H}^1(D,\mathbb{Z})$. Then
    \begin{align*}
     \{\Omega\}:=\frac{\int_{\beta}\Omega}{\int_{\alpha}\Omega}=\tau\mod{\mathrm{SL}(2,\mathbb{Z})}
     \end{align*} is an invariant of the $\mathrm{ALH}^{\ast}$ gravitational instanton. 
   \item Any $\mathrm{ALH}^{\ast}$ gravitational instanton can be compactified to a rational elliptic surface by adding an $\mathrm{I}_d$-fibre at infinity \cites{CJL3,HSVZ2}. From the classification of singular fibres of rational elliptic surfaces \cite{Per}, one has $1\leq d\leq 9$. We will use $\mathrm{ALH}^{\ast}_{d}$ gravitational instanton to indicate the diffeomorphism type of an $\mathrm{ALH}^{\ast}$ gravitational instanton, with $1\leq d\leq 9$ or $d=8'$. See the discussion after \cite{CJL2}*{Proposition 5.4}.
 \end{enumerate}   
\end{rk}

It is proven that any $\mathrm{ALH}^*$ gravitational instanton
can be compactified to 
a weak del Pezzo surface \cites{HSVZ2, CL} by adding a smooth anti-canonical divisor at infinity with modulus $\tau$. 
It is natural to introduce the markings for $\mathrm{ALH}^{\ast}$ gravitational instantons. 

From now on, to ease the notation,
we will omit \(d\) in the subscript most of the time
and only specify it when it plays a role in our discussion.

\begin{defn}\label{def: marked ALH*} Fix a reference $\mathrm{ALH}^{\ast}$ 
gravitational instanton of type $(d,\tau,c)$ with
$d\in\{1,\ldots,9\}$, $\tau\in \mathfrak{h}$, and $c>0$ and an ambient space $X_{\mathfrak{r}}$.
	A quadruple $(X,\omega,\Omega,\mu)$ is called 
  a \emph{marked $\mathrm{ALH}^{\ast}$ gravitational instanton} of type $(d,\tau,c)$ if
	it satisfies
	\begin{enumerate}
	\item $(X,\omega,\Omega)$ is an $\mathrm{ALH}^{\ast}$ gravitational instanton of type $(d,\tau,c)$;
	\item $\mu\colon X_{\mathfrak{r}}\rightarrow X$ is a diffeomorphism
	from the complement \(X_{\mathfrak{r}}:=Y_{\mathfrak{r}}\setminus D_{\mathfrak{r}}\) of
	our marked log Calabi--Yau pair \((Y_{\mathfrak{r}},D_{\mathfrak{r}})\). 
	\end{enumerate}
	Two marked $\mathrm{ALH}^{\ast}$ gravitational instantons $(X_i,\omega_i,\Omega_i,\mu_i)$ are isomorphic if there exists a diffeomorphism $f\colon X_2\rightarrow X_1$ such that $f^*\omega_1=\omega_2$, $f^*\Omega_1=\Omega_2$ and $\mu_1^{\ast}=\mu_2^{\ast}\circ f^*$. Denote $\mbox{mALH}^*(d,\tau,c)$ be the set of marked $\mathrm{ALH}^{\ast}$ gravitational instantons of type $(d,\tau,c)$. 
\end{defn}
Now we fixed a reference $\mathrm{ALH}^{\ast}$ gravitational instanton 
$(X_{\mathfrak{r}},\omega_{\mathfrak{r}},\Omega_{\mathfrak{r}})$ for \((d,\tau,c)\) as above.
We define the period domain of $\mathrm{ALH}^{\ast}$ gravitational instanton $\mathcal{P}\Omega(d,\tau,c)$ to be the
subset of $\mathrm{H}^2(X_{\mathfrak{r}},\mathbb{R})\times \mathrm{H}^2(X_{\mathfrak{r}},\mathbb{C})$ 
consisting of pairs $([\omega],[\Omega])$ such that 
\begin{enumerate}
	\item if $[C]\in \mathrm{H}_2(X_{\mathfrak{r}},\mathbb{Z})$ and
	\([C]^{2}=-2\), then $|[\omega]\cdot [C]|^2+|[\Omega]\cdot [C]|^2\neq 0$. 
	\item $[\omega]$ vanishes on $\operatorname{Im}(\mathrm{H}^1(D_{\mathfrak{r}},
	\mathbb{Z})\rightarrow \mathrm{H}_2(X_{\mathfrak{r}},\mathbb{Z}))$. 
	\item $\{\Omega\}=\tau\mod{\mathrm{SL}(2,\mathbb{Z})}$.
\end{enumerate} 
The period map for $\mathrm{ALH}^{\ast}$ gravitational instantons is then defined to be
\begin{align*}
   \mathcal{P}(d,\tau,c)\colon \mbox{mALH}^*(d,\tau,c) &\rightarrow \mathcal{P}\Omega(d,\tau,c)\\ 
   (X,\omega,\Omega,\mu)&\mapsto (\mu^*[\omega],\mu^*[\Omega]) .
\end{align*}
The goal of this section is to prove the following theorem.
\begin{thm} \label{main1}
	For each $(d,\tau,c)$
	with \(d\in\{1,\ldots,9\}\), \(\tau\in\mathfrak{h}\),
	and \(c>0\) as above, the period map $\mathcal{P}(d,\tau,c)$ is surjective.  
\end{thm}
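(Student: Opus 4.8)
The plan is to prove surjectivity of $\mathcal{P}(d,\tau,c)$ by combining a \emph{realization step} (constructing log Calabi--Yau pairs $(Y,D)$ whose complement carries the prescribed periods of the holomorphic volume form) with an \emph{existence step} (solving a complex Monge--Amp\`ere equation to produce the hyperK\"ahler metric with the prescribed K\"ahler class), and finally invoking the uniformization theorem to identify the resulting Ricci-flat space as an $\mathrm{ALH}^{\ast}$ gravitational instanton in the correct diffeomorphism type. Fix a target period point $([\omega],[\Omega])\in\mathcal{P}\Omega(d,\tau,c)$. First I would use the exact sequences \eqref{eq:long-exact-sequence}--\eqref{eq:homology-of-x} to split $\mathrm{H}_{2}(X_{\mathfrak{r}},\mathbb{Z})$ into the ``boundary part'' $\operatorname{Im}(\mathrm{H}^{1}(D_{\mathfrak{r}},\mathbb{Z})\to\mathrm{H}_{2}(X_{\mathfrak{r}},\mathbb{Z}))$ and the ``interior part'' identified with $[D_{\mathfrak{r}}]^{\perp}\subset\operatorname{Pic}(Y_{\mathfrak{r}})$. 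The data $[\Omega]$ restricted to the boundary part records exactly the position of the $b=9-d$ blow-up points along an elliptic curve $D$ of modulus $\tau$ (via the periods $\int_{\alpha}\Omega,\int_{\beta}\Omega$ and the Leray/Abel--Jacobi image of the $q_i$), while $[\Omega]$ on the interior classes $\gamma_{\mathfrak{r},i}$ is forced by the residue/cohomology relations once the boundary data is fixed. Thus I would construct $(Y,D)$ by choosing $D\cong\mathbb{C}/(\mathbb{Z}+\tau\mathbb{Z})$ and blowing up the points $q_{1},\ldots,q_{b}\in D$ dictated by $[\Omega]$, obtaining a weak del Pezzo surface compactifying $X=Y\setminus D$ with a marking $\mu$ matching $[\Omega]=\mu^{*}[\Omega_{X}]$.

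The second step is to produce the K\"ahler form. Given that $(Y,D)$ has been built so that $[\Omega]$ is realized, I need a complete Ricci-flat K\"ahler metric on $X=Y\setminus D$ with K\"ahler class $[\omega]$ and asymptotics to the Calabi ansatz. Here I would invoke the (by now standard) existence results for complete Calabi--Yau metrics on the complement of an anti-canonical divisor in a (weak) del Pezzo surface asymptotic to the Calabi model — this is the content promised in \S\ref{subsec:alh*-period} of the excerpt (``any cohomology class which is positive on every holomorphic curve in $X$ supports a Ricci-flat metric asymptotic to Calabi ansatz''). The condition $([\omega],[\Omega])\in\mathcal{P}\Omega$ guarantees precisely the needed positivity: by the $(-2)$-condition together with condition (2) that $[\omega]$ kills the boundary classes, $[\omega]$ pairs positively with every holomorphic curve in $X$ (all of which have self-intersection $\geq -2$ by Proposition \ref{prop:weak-del-pezzo}, and the $(-2)$-curves are exactly where vanishing could occur), so $[\omega]$ lies in the relevant open K\"ahler-type cone. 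Solving the corresponding Monge--Amp\`ere equation with the Calabi ansatz as reference then yields $(\omega,\Omega)$ with $2\omega^{2}=\Omega\wedge\bar\Omega$ and the required decay rate.

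The third step is bookkeeping: the pair $(X,\omega,\Omega,\mu)$ is a marked $\mathrm{ALH}^{\ast}$ gravitational instanton, and by construction $\mu^{*}[\omega]=[\omega]$, $\mu^{*}[\Omega]=[\Omega]$, so it maps to the chosen period point; I should also check it has type $(d,\tau,c)$ — the $d$ is the degree of $Y$ (equivalently the $\mathrm{I}_{d}$-fibre in the elliptic compactification, cf.~Remark \ref{rk:def-notation}), the $\tau$ is $\{\Omega\}$ which equals the modulus of $D$ by condition (3), and the constant $c$ is pinned down by the normalization $\tfrac{i}{2}\int_{D}\tfrac{\Omega_D}{2\pi i}\wedge\overline{(\tfrac{\Omega_D}{2\pi i})}=2\pi d$ together with $[\Omega]$, which can always be arranged by rescaling $\Omega$ and $\omega$ compatibly. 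One subtlety is that the naive blow-up construction produces $(Y,D)$ only for period points whose ``boundary part'' lies in a fundamental-domain-type region; the remaining period points are handled by acting with monodromy transformations (the subject of \S\ref{subsec:mono}), which act on $\mathrm{H}_{2}(X_{\mathfrak{r}},\mathbb{Z})$ preserving $\mathcal{P}\Omega(d,\tau,c)$ and are realized by diffeomorphisms, reducing the general case to the one already treated.

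The main obstacle I expect is the existence step: establishing that \emph{every} K\"ahler class $[\omega]$ positive on all holomorphic curves (including classes that are not themselves represented by a K\"ahler metric on the compact $Y$, only on the open $X$) is realized by a complete Ricci-flat metric asymptotic to the Calabi ansatz. This requires an open/closed continuity-method argument for the Monge--Amp\`ere equation on a non-compact space with prescribed asymptotics, controlling the metric near $D$ and proving the solution stays in the right cohomology class; one must also rule out concentration of volume along $(-2)$-curves, which is exactly where the $\mathcal{P}\Omega$ hypotheses are used. A secondary difficulty is verifying that the marking $\mu$ can be chosen so that the identification of $\mathrm{H}_2$ is compatible simultaneously with the distinguished basis $\mathcal{B}_{\mathfrak{r}}$ and with the period constraints — i.e.\ that the discrete ambiguities (choices of liftings $\gamma_{\mathfrak{r},i}$, and the $\mathrm{SL}(2,\mathbb{Z})$-ambiguity in $\tau$) are absorbed by the monodromy group, which is what makes the reduction in \S\ref{subsec:mono} work.
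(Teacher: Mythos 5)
Your overall architecture matches the paper's: realize $[\Omega]$ by constructing a pair $(Y,D)$ from blow-ups of points on an elliptic curve of modulus $\tau$ (this is the paper's Theorem \ref{thm:surjectivity-of-periods}, with the interior classes $\gamma_{\mathfrak{r},i}$ --- not the boundary classes --- recording the blow-up loci via the residue computation of Lemma \ref{lem:period-calculation}), then produce the Ricci-flat metric in the class $[\omega]$ via the Tian--Yau existence theorem. However, there is a genuine gap in your second step. You assert that the period-domain conditions force $[\omega]$ to pair \emph{positively} with every holomorphic curve in $X$. They do not. Condition (1) of $\mathcal{P}\Omega(d,\tau,c)$ only says $|[\omega]\cdot[C]|^{2}+|[\Omega]\cdot[C]|^{2}\neq 0$ for each $(-2)$-class; since $[\Omega]$ restricts to zero on any holomorphic curve, this gives $[\omega]\cdot[C]\neq 0$ on the holomorphic $(-2)$-curves, but the sign is unconstrained, and a class with negative pairing cannot satisfy the hypothesis {\rm($\dagger$)} of the Tian--Yau theorem. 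So the Monge--Amp\`ere step cannot be run directly on an arbitrary period point.

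The missing ingredient is exactly the paper's use of \S\ref{subsec:mono}: one applies the Weyl reflections in the $(-2)$-roots --- realized as monodromy transformations of degenerations of the pair $(Y,D)$ by Proposition \ref{prop:monodromy}, combined with \cite{Dol}*{Theorem 2.1} --- to move $(\mu^{-1})^{\ast}[\omega_0]$ into the chamber where it is positive on all holomorphic $(-2)$-curves; the Picard--Lefschetz formula shows these reflections fix $[\Omega]$ precisely because $[\Omega]$ vanishes on the reflecting roots, so the $\Omega$-period is undisturbed. You do invoke monodromy, but only to absorb discrete ambiguities in realizing the $\Omega$-periods (a fundamental-domain issue for the blow-up points), which is not where it is actually needed: the realization of $[\Omega]$ in Theorem \ref{thm:surjectivity-of-periods} requires no monodromy, whereas the positivity of $[\omega]$ cannot be achieved without it. You should also separately address the second clause of {\rm($\dagger$)}, namely that $[\omega]$ extends to a K\"ahler class on the compactification $Y$ (the analogue of Lemma \ref{lifting}), which your write-up flags as a difficulty but does not resolve.
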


\subsection{Period domains for holomorphic \texorpdfstring{\(2\)}{2}-forms}
\label{subsec:alh*-period}
Adopting the construction of references log Calabi--Yau pairs in
\S\ref{subsec:weak-del-pezzo-surfaces}, 
we can achieve the following theorem regarding the surjectivity of the period map.
\begin{thm}[Surjectivity of the periods of the $(2,0)$-forms]
	\label{thm:surjectivity-of-periods}
	Given complex numbers \(d_{1}\), \(d_{2}\) satisfying
	\(d_{1}\slash d_{2}\in\mathfrak{h}\) (in particular, \(d_{1}\) and \(d_{2}\) are
  non-zero)
	and \(c_{i}\in\mathbb{C}\), \(1\le i\le b=9-d\), 
	let 
	\begin{equation}
  \label{eq:coh-criterion-alh}
	[\Omega']=d_{1}\mathrm{PD}(\alpha_{\mathfrak{r}})+d_{2}\mathrm{PD}(\beta_{\mathfrak{r}})
	+\sum_{i=1}^{b} 
	c_{i} \mathrm{PD}(\gamma_{\mathfrak{r},i})\in\mathrm{H}^{2}(X_{\mathfrak{r}},\mathbb{C}).
	\end{equation} 
	There exists a marked log Calabi--Yau pair \((Y,D)\)
	and a diffeomorphism \(\mu\colon X_{\mathfrak{r}}\to X\) 
  with \(X=Y\setminus D\) such that
	\begin{equation}
	\mu^{\ast}[\Omega] = [\Omega'],
	\end{equation}
  where \(\Omega\) is a holomorphic \(2\)-form on \(X\),
  that is, any cohomology class in \(\mathrm{H}^{2}(X_{\mathfrak{r}},\mathbb{C})\)
	satisfying the condition \eqref{eq:coh-criterion-alh}
	can be realized as a cohomology class of 
  a holomorphic \(2\)-form on
	some log Calabi--Yau pair. 
\end{thm}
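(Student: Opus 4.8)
The plan is to realize the prescribed periods by deforming the \emph{complex structure} of the reference pair $(Y_\mathfrak{r},D_\mathfrak{r})$ while keeping its topology fixed, so that the new holomorphic $2$-form has the desired cohomology class. Concretely, start from the degree $9$ model $(\mathbf{P}^2, D_{\mathfrak{r},9})$ and then build the degree $d$ model by blowing up points, exactly as in \S\ref{subsec:construction-of-m-log-CY}, but now allowing the blow-up points to \emph{move}. The first step is to treat $d=9$: on $(\mathbf{P}^2,E)$ the complement $X_{\mathfrak{r},9}=\mathbf{P}^2\setminus E$ has $\mathrm{H}_2(X_{\mathfrak{r},9},\mathbb{Z})\cong\mathrm{H}^1(E,\mathbb{Z})$ of rank $2$, and the holomorphic $2$-form $\Omega$ on $X$ is unique up to scale with residue $\Omega_E$ along $E$. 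Its periods over $\alpha_\mathfrak r,\beta_\mathfrak r$ are $2\pi i$ times the periods of $\Omega_E$ over the corresponding $1$-cycles on $E$, i.e.\ they are $(\text{const})\cdot(\int_{\alpha}dz,\int_{\beta}dz)$ for the elliptic curve $E$. Since $d_1/d_2\in\mathfrak h$, there is a unique elliptic curve $E$ (up to isomorphism) whose period ratio is $d_1/d_2$; choosing the embedding $E\hookrightarrow\mathbf{P}^2$ as a smooth cubic and rescaling $\Omega$ by the overall constant $d_2$ (or $d_1$) realizes $[\Omega']=d_1\mathrm{PD}(\alpha_\mathfrak r)+d_2\mathrm{PD}(\beta_\mathfrak r)$ on the nose. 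This handles the base case of the induction.

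Next, I would pass from the degree $d+1$ model to the degree $d$ model by blowing up a point $q_b$ lying \emph{on} the anti-canonical curve $D$. The key cohomological input is the structure of $\mathrm{H}^2(X_{\mathfrak{r},d},\mathbb{C})$ worked out in \S\ref{subsec:weak-del-pezzo-surfaces}: it is spanned by $\mathrm{PD}(\alpha_\mathfrak r),\mathrm{PD}(\beta_\mathfrak r)$ together with the classes $\mathrm{PD}(\gamma_{\mathfrak r,i})$ coming from $[D]^\perp\cong\langle H-3E_1,E_1-E_2,\dots\rangle$. Under the blow-up $Y_{\mathfrak r,d}\to Y_{\mathfrak r,d+1}$ the form $\Omega$ pulls back to the holomorphic $2$-form on the larger complement, so its periods over $\alpha_\mathfrak r,\beta_\mathfrak r$ and over the previously-constructed $\gamma_{\mathfrak r,i}$, $i<b$, are unchanged, and the only \emph{new} period is $\int_{\gamma_{\mathfrak r,b}}\Omega$. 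Here $\gamma_{\mathfrak r,b}$ is a $2$-chain whose boundary is a loop on $D$ winding around the point $q_b$; by a residue/Stokes computation $\int_{\gamma_{\mathfrak r,b}}\Omega$ is (up to the fixed constant and an $\mathbb{Z}$-ambiguity coming from the choice of lift $\gamma_{\mathfrak r,b}$) the Abel--Jacobi-type integral $\int_{q_b^{(0)}}^{q_b}\Omega_D$ along $D$, where $q_b^{(0)}$ is the reference blow-up point. Since $\Omega_D$ is a nowhere-vanishing $1$-form on the genus-$1$ curve $D$, the map $q_b\mapsto\int_{q_b^{(0)}}^{q_b}\Omega_D$ is a covering map onto $\mathbb{C}/\Lambda$, hence \emph{surjective}; therefore we can choose $q_b\in D$ to make this new period equal to any prescribed $c_b\in\mathbb{C}$. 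Iterating down from $d+1$ to $d$ realizes all of $c_1,\dots,c_b$ one at a time, completing the construction for $1\le d\le 9$; the $d=8'$ case is identical, starting from the $\mathbf{P}^1\times\mathbf{P}^1$ model and using the two rulings, with $\gamma_\mathfrak r$ the single extra class.

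One subtlety I would need to address carefully is \textbf{genericity}: the inductive choices of $q_1,\dots,q_b$ must land us inside the open locus of \emph{del Pezzo} (not merely weak del Pezzo) configurations, i.e.\ the points must remain in general position in the sense of Proposition~\ref{prop:weak-del-pezzo}. Since the period constraints only pin down each $q_i$ up to a lattice translation on $D$ (a discrete ambiguity) and allow no freedom transverse to $D$, I expect to argue that the points violating general position form a proper closed subset of $D$ for each $q_i$, so one can still choose lattice representatives avoiding all of them — though if this fails one simply allows $(Y,D)$ to be a \emph{weak} del Pezzo pair, which is harmless for the theorem as stated since the target only asks for a log Calabi--Yau pair. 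The second point to check is the compatibility of the markings: one must verify that the diffeomorphism type of $X=Y\setminus D$ is independent of where on $D$ the points $q_i$ sit, so that a single $\mu\colon X_\mathfrak r\to X$ identifying the distinguished bases exists; this follows because moving a blow-up point within $D$ is an isotopy of $(Y,D)$, hence induces a diffeomorphism of complements carrying $\gamma_{\mathfrak r,i}$ to $\gamma_{\mathfrak r,i}$ and fixing $\alpha_\mathfrak r,\beta_\mathfrak r$. The \textbf{main obstacle} is the residue computation identifying $\int_{\gamma_{\mathfrak r,b}}\Omega$ with the Abel--Jacobi integral together with bookkeeping the $\mathrm{H}^1(D,\mathbb{Z})$-worth of ambiguity in the lifts $\gamma_{\mathfrak r,i}$: one must show this ambiguity is absorbed either by the lattice $\Lambda$ (so it does not obstruct surjectivity) or by adjusting the basis compatibly, and that the whole scheme of periods assembles consistently across all $b$ steps of the induction.
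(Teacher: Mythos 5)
Your proposal follows essentially the same route as the paper: fix the cubic by the modulus $d_1/d_2$ in the $d=9$ case, then realize each remaining coefficient $c_i$ as an Abel--Jacobi integral of $\operatorname{Res}\Omega$ by moving the blow-up points along $D$, absorbing the lattice ambiguity of the lifts by the choice of paths --- the paper's Lemma \ref{lem:period-calculation} supplies exactly the residue computation you flag as the main obstacle (the first new period is $-3\int_{O}^{p}\operatorname{Res}\Omega$, the later ones are differences $\int_{p_i}^{p}\operatorname{Res}\Omega$). The one point you gloss over is that for $d=9$ the chosen cubic only has period ratio $\tau$ modulo $\mathrm{SL}(2,\mathbb{Z})$; the paper pins the marking down on the nose using that the monodromy of the family of plane cubics generates $\mathrm{SL}(2,\mathbb{Z})$.
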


\begin{proof}
  We will construct a marked log Calabi--Yau pair \((Y,D)\)
  such that the complement \(X:=Y\setminus D\) supports 
  a holomorphic \(2\)-form realizing the class \([\Omega']\). 
  
    

  \noindent (a) We deal with the case \(d=9\). In
  this case, we have \(b=0\)
  and 
  \begin{equation*}
  [\Omega'] = d_{1}\mathrm{PD}(\alpha_{\mathfrak{r}})+
  d_{2}\mathrm{PD}(\beta_{\mathfrak{r}}).
  \end{equation*}
  We will construct \(X\) as a complement of an elliptic curve
  in \(\mathbf{P}^{2}\).
  
  
  Put \(\tau = d_{1}\slash d_{2}\in\mathfrak{h}\) and
  let \(X:=\mathbf{P}^{2}\setminus D\) where
  \(D\) is an elliptic curve with modulus \(\tau\);
  \begin{equation*}
  \mathbb{C}\slash \Lambda_{\tau} \cong D \subset \mathbf{P}^{2},~
  ~\Lambda_{\tau}=\mathbb{Z}\oplus\mathbb{Z}\tau.
  \end{equation*} 
  Let \(\Omega\) be a meromorphic \(2\)-form on \(\mathbf{P}^{2}\) with
  a simple pole along \(D\). Notice
  that \(\Omega\) is unique up to a constant.
  By the residue formula, we have
  \begin{equation*}
  \int_{\delta(\alpha)}\Omega=
  \int_{\alpha} \operatorname{Res}\Omega,
  ~\mbox{and}~
  \int_{\delta(\beta)}\Omega=\int_{\beta} \operatorname{Res}
  \Omega
  \end{equation*}
  where \(\{\alpha,\beta\}\)
  is a symplectic basis of \(\mathrm{H}_{1}(D,\mathbb{Z})\)
  and \(\delta\) is the connecting homomorphism
  in \eqref{eq:delta-s1-bundles}.
  Rescaling \(\Omega\) if necessary, we may assume
  \begin{equation*}
  \int_{\alpha}
  \operatorname{Res}\Omega=
  \int_{\delta(\alpha)} \Omega =1.
  \end{equation*}
  Then
  \begin{equation}
  \label{eq:tau-equation}
  \int_{\beta}
  \operatorname{Res}\Omega=
  \int_{\delta(\beta)} \Omega\equiv\tau 
  \mod{\mathrm{SL}(2,\mathbb{Z})}.
  \end{equation}
  We can lift the congruence in \eqref{eq:tau-equation}
  to an equality in \(\mathfrak{h}\).
  Indeed, we can find a path \(\Gamma\) in 
  \(\mathrm{H}^{0}(\mathbf{P}^{2},\mathcal{O}(3))_{\mathrm{sm}}\)
  (the space of smooth sections) such that
  \(\alpha\) (resp.~\(\beta\)) 
  is the parallel transport of \(\alpha_{\mathfrak{r}}\) 
  (resp.~\(\beta_{\mathfrak{r}}\))
  along \(\Gamma\) since 
  the monodromies
  for the family of elliptic curves in \(\mathbf{P}^{2}\)
  generate \(\mathrm{SL}(2,\mathbb{Z})\).
  Consequently, \(\Gamma\) gives rise to 
  a marking \(\mu\colon X_{\mathfrak{r}}\to X\)
  satisfying 
  \begin{equation*}
  \mu^{\ast}(\mathrm{PD}(\delta(\alpha))=\mathrm{PD}(\delta(\alpha_{\mathfrak{r}}))~\mbox{and}~
  \mu^{\ast}(\mathrm{PD}(\delta(\beta)))=\mathrm{PD}(\delta(\beta_{\mathfrak{r}})).
  \end{equation*}
  To ease the notation, we will drop \(\delta(-)\) 
  and simply write \(\alpha\in \mathrm{H}_{2}(X,\mathbb{Z})\) 
  instead of \(\delta(\alpha)\).
  Adapting our convention, the equalities are transformed into
  \begin{equation*}
  \mu^{\ast}(\mathrm{PD}(\alpha))=\mathrm{PD}(\alpha_{\mathfrak{r}})~\mbox{and}~
  \mu^{\ast}(\mathrm{PD}(\beta))=\mathrm{PD}(\beta_{\mathfrak{r}})
  \end{equation*}
  when the context is clear. 
  Then the marked log Calabi--Yau pair \((X,D)\) together with
  the basis \(\{\alpha,\beta\}\subset\mathrm{H}_{2}(X,\mathbb{Z})\)
  and \(\mu\) is what we want. Indeed, because
  \begin{equation*}
  d_{2}=d_{2}\int_{\alpha}\Omega = 
  d_{2}\int_{X}\Omega\wedge\mathrm{PD}(\alpha)~
  \mbox{and}~
  d_{1}=d_{2}\int_{\beta}\Omega = 
  d_{2}\int_{X}\Omega\wedge\mathrm{PD}(\beta),
  \end{equation*}
  we have
  \begin{equation}
  \mu^{\ast}[d_{2}\Omega] = d_{1}\mathrm{PD}(\alpha_{\mathfrak{r}})+
  d_{2}\mathrm{PD}(\beta_{\mathfrak{r}})=[\Omega'].
  \end{equation}
  \noindent (b) We now deal with the case \(d=8\). Let 
  \begin{equation}
  \label{eq:degree-8-hol-period}
  [\Omega']=d_{1}\mathrm{PD}(\alpha_{\mathfrak{r}})+d_{2}\mathrm{PD}(\beta_{\mathfrak{r}})+
  c_{1}\mathrm{PD}(\gamma_{\mathfrak{m},1})
  \in\mathrm{H}^{2}(X_{\mathfrak{r}},\mathbb{C})
  \end{equation}
  with \(d_{1}\slash d_{2}\in\mathfrak{h}\)
  and \(c_{1}\in\mathbb{C}\). 
  By the argument in (a),
  we can find a smooth cubic \(E\subset\mathbf{P}^{2}\), a symplectic basis
  \(\{\alpha,\beta\}\subset\mathrm{H}_{2}(V,\mathbb{Z})\) with \(V=\mathbf{P}^{2}\setminus E\),
  and a diffeomorphism \(\nu\colon X_{\mathfrak{r},9}\to V\) along
  a curve \(\Gamma\) such that
  \begin{equation}
  \nu^{\ast}[\Omega]
  =d_{1}\mathrm{PD}(\alpha_{\mathfrak{r}})+d_{2}\mathrm{PD}(\beta_{\mathfrak{r}}).
  \end{equation}
  Here \(\Omega\) is a holomorphic \(2\)-form
  on \(V\) with the normalization 
  \begin{equation}
  \int_{\alpha} \operatorname{Res}\Omega = d_{2},
  \end{equation}
  and the cycles \(\alpha_{\mathfrak{r}}\) and \(\beta_{\mathfrak{r}}\)
  in \eqref{eq:degree-8-hol-period}
  are regarded as cycles in \(\mathrm{H}_{2}(X_{\mathfrak{r},9},\mathbb{Z})\).

  Let \(p_{1}\in E\) be the parallel transport of \(q_{\mathfrak{r},1}\in 
  D_{\mathfrak{r},9}\) 
  along \(\Gamma\) and consider the blow-up \(\pi'\colon 
  Y'=\mathrm{Bl}_{p_{1}}\mathbf{P}^{2}\to\mathbf{P}^{2}\).
  We have 
  \begin{equation*}
  (\pi')^{\ast}\Omega_{\mathbf{P}^{2}}(E)\cong \Omega_{Y'}(D')
  \end{equation*}
  where \(D'\) is the proper transform of \(E\).
  The path \(\Gamma\)
  determines a marking 
  \(\nu\colon X_{\mathfrak{r},8}\to X'\) for 
  \(X':=Y'\setminus D'\)
  such that \(\nu^{\ast}(\mathrm{PD}(\alpha))=\mathrm{PD}(\alpha_{\mathfrak{r}})\) and
  \(\nu^{\ast}(\mathrm{PD}(\beta))=\mathrm{PD}(\beta_{\mathfrak{r}})\). Now
  let \(\gamma_{1}\in\mathrm{H}_{2}(X',\mathbb{Z})\) such that
  \(\mathrm{PD}(\gamma_{1}')=(\nu^{\ast})^{-1}(\mathrm{PD}(\gamma_{\mathfrak{r},1}))\).
  We get by integration
  \begin{equation}
  \nu^{\ast}[\Omega]=d_{1}\mathrm{PD}(\alpha_{\mathfrak{r}})+d_{2}
  \mathrm{PD}(\beta_{\mathfrak{r}})+c_{1}'\mathrm{PD}(\gamma_{\mathfrak{r},1}),~
  c_{1}'=\int_{\gamma_{1}'}(\pi')^{\ast}\Omega \in\mathbb{C}.
  \end{equation}
  Let \(p\in E\) and \(\Gamma'\) be a smooth curve joining \(p\) and \(p_{1}\).
  Then \(\Gamma'\) gives rise to a diffeomorphism
  \(\rho\colon X'\to X'\) which takes \(p_{1}\) to \(p\).
  Consider the blow-up \(\pi\colon Y=\mathrm{Bl}_{p}\mathbf{P}^{2}\to\mathbf{P}^{2}\).
  The curve \(\Gamma'\) also gives rise to a diffeomorphism 
  \(\rho\colon X'\to X=Y\setminus D\) where
  \(D\) is the proper transform of \(E\) under \(\pi\).
  Let \(\gamma_{1}=\rho_{\ast}(\gamma_{1}')\).
  We can achieve the coefficient \(c_{1}\)
  in \eqref{eq:degree-8-hol-period} by moving \(p\) around in \(E\).
  Indeed, by Lemma \ref{lem:period-calculation} below, we have
  \begin{equation*}
  \int_{{\gamma}_{1}}\Omega \equiv -3\int_{O}^{p} \operatorname{Res}\Omega
  \mod{d_{2}\Lambda_{\tau}}
  \end{equation*}
  where \(O\) is a flex point on \(E\cong D'\) served
  as the additive identity element
  and \(\Lambda_{\tau} = \mathbb{Z}\oplus\mathbb{Z}\tau\) with
  \(\tau = d_{1}\slash d_{2}\).
  Now we choose \(p\in E\) such that
  \begin{equation*}
  c_{1}\equiv -3\int_{O}^{p} \operatorname{Res}\Omega
  \mod{d_{2}\Lambda_{\tau}}.
  \end{equation*}
  We can lift the congruence to an equality
  by adding a loop in \(E\) passing through \(p\) and deforming \(\rho\) accordingly.
  Then the pair \((Y,D)\), the holomorphic top form
  \(\pi^{\ast}\Omega\) and the diffeomorphism \(\mu=\rho\circ\nu\colon X_{\mathfrak{r},8}\to X\)
  are what we are looking for, i.e.,
  \begin{equation*}
  \mu^{\ast}[d_{2}\pi^{\ast}\Omega]=d_{1}\mathrm{PD}(\alpha_{\mathfrak{r}})+
  d_{2}\mathrm{PD}(\beta_{\mathfrak{r}})+
  c_{1}\mathrm{PD}(\gamma_{\mathfrak{m},1})=[\Omega'].
  \end{equation*}
  This proves the theorem when \(d=8\)

  \noindent (c) Now let us deal with the case \(d=7\). 
  Let 
  \begin{equation}
  \label{eq:d=7}
  [\Omega']=d_{1}\mathrm{PD}(\alpha_{\mathfrak{r}})+d_{2}\mathrm{PD}(\beta_{\mathfrak{r}})+
  c_{1}\mathrm{PD}(\gamma_{\mathfrak{r},1})+c_{2}\mathrm{PD}(\gamma_{\mathfrak{r},2})
  \in\mathrm{H}^{2}(X_{\mathfrak{r},7},\mathbb{C})
  \end{equation}
  with \(d_{1}\slash d_{2}\in\mathfrak{h}\)
  and \(c_{1},c_{2}\in\mathbb{C}\). 
  By our discussion in (b), we can find a marked 
  log Calabi--Yau pair \((Y',D')\), 
  a symplectic basis \(\{\alpha,\beta\}\) of \(\mathrm{H}_{1}(D',\mathbb{Z})\),
  and a diffeomorphism \(\nu\colon X_{\mathfrak{r},8}\to X'\)
  with
  \begin{equation}
  \nu^{\ast}[\Omega]=d_{1}\mathrm{PD}(\alpha_{\mathfrak{r}})+
  d_{2}\mathrm{PD}(\beta_{\mathfrak{r}})+
  c_{1}\mathrm{PD}(\gamma_{\mathfrak{r},1}).
  \end{equation}
  Here \(X'=Y'\setminus D'\). We remark that \((Y',D')\)
  is constructed from a blow-up of \(\mathbf{P}^{2}\) and 
  \(D'\) is a proper transform of an elliptic curve in \(\mathbf{P}^{2}\).
  Let \(p_{2}\in D'\) be the image of \(q_{\mathfrak{r},2}\in D_{\mathfrak{r},8}\) (see
  construction in \S\ref{subsec:construction-of-m-log-CY}) under \(\nu\).
  Consider the blow-up \(
  \pi_{2}\colon \mathrm{Bl}_{p_{2}}Y'=:\bar{Y}'\to Y'\) and denote by \(\bar{D}'\)
  the proper transform of \(D'\).
  Let \(E_{2}\) (resp.~\(E_{1}\)) 
  be the exceptional divisor of \(\bar{Y}'\to Y'\) (resp.~
  the pullback of the exceptional divisor of \(Y'\to\mathbf{P}^{2}\)).
  We obtain a diffeomorphism 
  \(\mu\colon X_{\mathfrak{r},7}\to \bar{X}'=\bar{Y}'\setminus \bar{D}'\).
  Define a homology class \(\bar{\gamma}'_{2}\in\mathrm{H}_{2}(\bar{X}',\mathbb{Z})\) via
  \begin{equation}
  \mu^{\ast}(\mathrm{PD}(\bar{\gamma}'_{2})) = \mathrm{PD}(\gamma_{\mathfrak{r},2}).
  \end{equation}
  Let \(p\in D'\) and \(\Gamma'\) be a curve in \(D'\) connecting \(p\)
  and \(p_{2}\). Similar to the case (b), the curve
  \(\Gamma'\) gives rise to a diffeomorphism \(\rho\colon \bar{X}'\to X=\mathrm{Bl}_{p}Y'
  \setminus D\) where \(D\) is the proper transform of \(D'\) and
  put \(\gamma_{2}=\rho_{\ast}(\bar{\gamma}_{2}')\). 
  Then it follows that
  \begin{equation*}
  \int_{\gamma_{2}}\pi^{\ast}\Omega \equiv \int_{p_{1}}^{p}
  \operatorname{Res}\Omega 
  \mod{d_{2}\Lambda_{\tau}}.
  \end{equation*}
  Here we recall that \(p_{1}=\nu(q_{\mathfrak{r},1})\).
  We can achieve \(c_{2}\) in \eqref{eq:d=7} by moving \(p\) around, i.e.,
  we can find an appropriate curve \(\Gamma'\) in \(D\) connecting \(p_{1}\)
  and \(p\) such that
  \begin{equation*}
  c_{2}=\int_{\Gamma'}\operatorname{Res}\Omega = \int_{\gamma_{2}}\Omega.
  \end{equation*}
  This completes the proof when \(d=7\).
  The remaining cases \(1\le d\le 6\) can be done by the same
  procedure inductively.

  \noindent (d) Let us deal with the last case \(d=8'\). Let 
  \begin{equation*}
  [\Omega']=d_{1}\mathrm{PD}(\alpha_{\mathfrak{r}})+d_{2}\mathrm{PD}(\beta_{\mathfrak{r}})+
  c\mathrm{PD}(\gamma_{\mathfrak{r},1})
  \in\mathrm{H}^{2}(X_{\mathfrak{r},8'},\mathbb{C})
  \end{equation*}
  with 
  \(\tau:=d_{1}\slash d_{2}\in\mathfrak{h}\)
  and \(c\in\mathbb{C}\). 
  Similar to the case \(d=9\), let 
  \(E\subset\mathbf{P}^{2}\) be an elliptic curve
  with modulus \(\tau\). 
  Denote by \(\{\alpha,\beta\}\) a sympletic basis of \(\mathrm{H}_{1}(E,\mathbb{Z})\).
  Let \(\Omega\) be a meromorphic 
  \(2\)-form on \(\mathbf{P}^{2}\) having a simple pole along \(E\)
  with the normalization
  \begin{equation*}
  \int_{\alpha} \operatorname{Res}\Omega = 1.
  \end{equation*}
  Then we have
  \begin{equation*}
  \int_{\beta} \operatorname{Res}\Omega\equiv \tau \mod{\mathrm{SL}(2,\mathbb{Z})}.
  \end{equation*}
  Now we can pick \(p,q\in E\) and a curve 
  \(\Gamma\) connecting them such that
  \begin{equation}
  \int_{\Gamma} \operatorname{Res}\Omega = c/d_{2}.
  \end{equation}
  Consider the blow-up \(\pi\colon\mathrm{Bl}_{\{p,q\}}\mathbf{P}^{2}\to\mathbf{P}^{2}\).
  Let \(L\) be the line passing through \(p\) and \(q\).
  Denote by \(E_{p}\) (resp.~\(E_{q}\)) the exceptional divisor over \(p\) (resp.~\(q\))
  and by \(\bar{L}\) (resp.~\(\bar{E}\)) the proper transform of \(L\) (resp.~\(E\)).
  If it happens \(p=q\) (i.e., \(c\equiv 0\mod{\Lambda_{\tau}}\)),
  we shall take \(L\) to be the tangent of \(E\) at \(p\) and
  consider the blow-up at infinitely near points \(p\) and the
  intersection of the proper transform of \(L\) and \(E\).
  In any case, we have \(\pi^{\ast}L = \bar{L}+E_{p}+E_{q}\) and \(\bar{L}\)
  becomes a \((-1)\) curve.
  Let \(\rho\colon\mathrm{Bl}_{\{p,q\}}\mathbf{P}^{2}\to Y\)
  be the blow-down of \(\bar{L}\). 
  When \(p\ne q\), we have \(Y\cong\mathbf{P}^{1}\times\mathbf{P}^{1}\)
  (\(\ell_{1}=\rho(E_{p})\) and \(\ell_{2}=\rho(E_{q})\) give
  the rulings). When \(p=q\), we have \(Y\cong\mathbb{F}_{2}\)
  and \(\rho_{\ast}([E_{q}]-[E_{p}])\) represents the homology 
  class of the unique \((-2)\) curve.
  Put \(D=\rho(\bar{E})\).
  Then \(E\cong \bar{E}\cong D\).
  In any case, \(\gamma:=\rho_{\ast}([E_{q}]-[E_{p}])\) gives
  a homology class in the complement \(X:=Y\setminus D\).
  We can prove that 
  \begin{equation}
  \int_{\gamma}\tilde{\Omega} = \frac{c}{d_{2}} 
  \mod{\Lambda_{\tau}}.
  \end{equation}
  Here \(\tilde{\Omega}\) is the unique meromorphic \(2\)-form on \(Y\)
  having a simple pole along \(D\) such that
  \(\rho^{\ast}\tilde{\Omega} = \pi^{\ast}\Omega\).
  As in the case (a), by choosing a path in 
  \(\mathrm{H}^{0}(\mathbf{P}^{2},\mathcal{O}(3))\)
  appropriately and deforming \(p,q\) on \(E\) suitably, we can find
  a diffeomorphism \(\mu\colon X_{\mathfrak{r},8'}\to X\)
  such that
  \begin{equation*}
  \mu^{\ast}(\mathrm{PD}(\alpha))=\mathrm{PD}(\alpha_{\mathfrak{r}}),~
  \mu^{\ast}(\mathrm{PD}(\beta))=\mathrm{PD}(\beta_{\mathfrak{r}}),~\mbox{and}~
  \mu^{\ast}(\mathrm{PD}(\gamma))=\mathrm{PD}(\gamma_{\mathfrak{r}}).
  \end{equation*}
  Then \(\Omega'=d_{2}\tilde{\Omega}\) is what we need.
\end{proof}

\begin{lem}
	\label{lem:period-calculation}
	Adapt the notation in the proof of 
	Theorem \ref{thm:surjectivity-of-periods}.
	We have
	\begin{equation*}
	\int_{{\gamma}_{1}}\Omega \equiv -3\int_{O}^{p} \operatorname{Res}\Omega~
	\mod{d_{2}\Lambda_{\tau}}
	\end{equation*}
	where \(O\) is a flex point served as
	the additive identity element on \(\bar{D}\)
	and \(\Lambda_{\tau} = \mathbb{Z}\oplus\mathbb{Z}\tau\) with
	\(\tau = d_{2}\slash d_{1}\). 
	From the expression, it is independent of
	the choice of the flex point.
\end{lem}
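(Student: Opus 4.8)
The plan is to rewrite $\int_{\gamma_{1}}\Omega$ as an integral of the residue $1$-form $\eta:=\operatorname{Res}\Omega$ over an explicit $1$-chain on $\bar D$, and then to evaluate that integral through the group law on the plane cubic. Write $\pi\colon Y=\mathrm{Bl}_{p}\mathbf{P}^{2}\to\mathbf{P}^{2}$ for the blow-up at $p\in E$, $E_{1}$ for the exceptional curve, $\bar D\subset Y$ for the proper transform of $E$, and $z:=E_{1}\cap\bar D$ for the unique intersection point; under $\pi|_{\bar D}\colon\bar D\xrightarrow{\sim}E$ we have $z\mapsto p$, and $\eta$ is the pullback of the residue of $\Omega$ on $E$. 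Recall from \S\ref{subsec:weak-del-pezzo-surfaces} that $[\bar D]=3H-E_{1}$ and $[\bar D]^{\perp}=\langle H-3E_{1}\rangle$, and that $\gamma_{1}\in\mathrm{H}_{2}(Y\setminus\bar D,\mathbb{Z})$ is chosen to lift $H-3E_{1}$. By \eqref{eq:homology-of-x} two such lifts differ by the Leray coboundary $\delta(c)$ of a $1$-cycle $c$ on $\bar D$, and $\int_{\delta(c)}\Omega=\int_{c}\eta$ lies in the period lattice of $\eta$, which equals $d_{2}\Lambda_{\tau}$ by the normalizations fixed in the proof of Theorem~\ref{thm:surjectivity-of-periods}; hence $\int_{\gamma_{1}}\Omega$ is well defined modulo $d_{2}\Lambda_{\tau}$, and it suffices to compute it for one convenient lift.

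The main tool is the standard residue/tube construction. If $C\in[\bar D]^{\perp}$ is represented by an algebraic $1$-cycle meeting $\bar D$ properly, a lift $\gamma$ of $C$ to $\mathrm{H}_{2}(Y\setminus\bar D,\mathbb{Z})$ is obtained by deleting small disks transverse to $\bar D$ at the points of $C\cap\bar D$ and gluing in annuli, inside the boundary circle bundle of a tubular neighbourhood of $\bar D$, lying over a $1$-chain $\mu\subset\bar D$ with $\partial\mu=C\cdot\bar D$ (a degree-zero divisor on $\bar D$, since $C\cdot[\bar D]=0$). Now a holomorphic $2$-form restricts to zero on every complex curve, so $\Omega$ integrates to zero over the surgered components of $C$; and writing $\Omega=\frac{\mathrm{d}v}{v}\wedge\theta+(\text{holomorphic})$ near $\bar D=\{v=0\}$ with $\theta|_{\bar D}=\eta$, the integral of $\Omega$ over an annulus lying over $\mu$ equals $\int_{\mu}\eta$ in the paper's normalization of $\delta$. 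Altogether $\int_{\gamma}\Omega\equiv\int_{\mu}\eta\mod{d_{2}\Lambda_{\tau}}$.

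I would apply this with $C=H-3E_{1}$ and the representative $\tilde L-2E_{1}$, where $\tilde L$ is the proper transform of a line $L\subset\mathbf{P}^{2}$ through $p$ meeting $E$ transversally in $\{p,x_{2},x_{3}\}$. Transversality at $p$ forces $\tilde L$ to avoid $z$ and to meet $\bar D$ transversally at the two points $\tilde x_{2},\tilde x_{3}$ lying over $x_{2},x_{3}$, while $E_{1}$ meets $\bar D$ transversally only at $z$; hence $(\tilde L-2E_{1})\cdot\bar D=\tilde x_{2}+\tilde x_{3}-2z$, and we may take $\mu=\mu_{1}+\mu_{2}$ with $\mu_{i}$ a path in $\bar D$ from $z$ to $\tilde x_{i}$. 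Transporting to $E$ via $\pi|_{\bar D}$ and using that $O$ is a flex, collinearity of $p,x_{2},x_{3}$ on $E$ gives $p\oplus x_{2}\oplus x_{3}=O$, hence $\int_{O}^{x_{2}}\eta+\int_{O}^{x_{3}}\eta\equiv-\int_{O}^{p}\eta\mod{d_{2}\Lambda_{\tau}}$, and therefore
\[
\int_{\gamma_{1}}\Omega\equiv\int_{\mu}\eta\equiv\Bigl(\int_{O}^{x_{2}}\eta-\int_{O}^{p}\eta\Bigr)+\Bigl(\int_{O}^{x_{3}}\eta-\int_{O}^{p}\eta\Bigr)\equiv-3\int_{O}^{p}\eta\mod{d_{2}\Lambda_{\tau}}.
\]
Independence of the choice of flex is then automatic: any two flexes of a plane cubic differ by a $3$-torsion point, so three times their difference lies in $d_{2}\Lambda_{\tau}$.

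The step I expect to demand the most care is matching orientations and normalizations in the second paragraph — the orientation of the normal linking circles, the orientation of the capping annuli, and the paper's conventions for the Leray coboundary $\delta$ and for $\operatorname{Res}$ — since this is exactly what pins down the overall sign (equivalently, the direction of the $\mu_{i}$, from $z$ to $\tilde x_{i}$ rather than the reverse). A minor point to dispatch is that one may always take $L$ generic (distinct transverse intersections with $E$ and direction $\neq T_{p}E$), as neither $C=H-3E_{1}$ nor the reduction of $\int_{\gamma_{1}}\Omega$ modulo $d_{2}\Lambda_{\tau}$ depends on $L$.
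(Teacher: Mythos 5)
Your proposal is correct and follows essentially the same route as the paper: both lift $H-3E_{1}$ by gluing $S^{1}$-tubes over paths on the anticanonical curve joining (the point over) $p$ to the two other intersection points of a line through $p$, reduce the ambiguity of the lift to the period lattice $d_{2}\Lambda_{\tau}$, and conclude via collinearity and the group law that the sum of the three Abel--Jacobi coordinates vanishes, yielding $-3\int_{O}^{p}\operatorname{Res}\Omega$. The only cosmetic difference is that you carry out the bookkeeping on the blow-up with the representative $\tilde L-2E_{1}$, whereas the paper works directly with the two paths $\sigma_{1},\sigma_{2}$ on the cubic in $\mathbf{P}^{2}$.
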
 
\begin{proof}
	Choose a hyperplane \(H\) in \(\mathbf{P}^{2}\)
	passing through \(p\) and intersecting \(D\) at three distinct points,
	say \(D\cap H=\{p,s,t\}\), and
	transversally at \(p\).
	Recall that \([\bar{D}]^{\perp}=\langle H-3E_{1}\rangle\),
	where \(E_{1}\) is the exceptional divisor of 
	\(\pi_{1}\). 
	
	Choose a smooth curve \(\sigma_{1}\) (resp.~\(\sigma_{2}\))
	from \(p\) and \(s\) (resp.~\(p\) and \(t\)).
	We may assume that the relative interior
	of \(\sigma_{i}\) are disjoint.
	By the construction in \cite{Fr}, we can lift the cycle \([H-3E_{1}]\)
	to a cycle \(\delta\) in \(X\) by gluing \(S^{1}\)-bundles over
	\(\sigma_{1}\) and \(\sigma_{2}\).
	Again the lifting is not unique; any two
	liftings differ by an element in \(\mathrm{H}^{1}(D,\mathbb{Z})\).
	Therefore, 
	\begin{equation*}
	\int_{\gamma_{1}}\Omega\equiv \int_{\delta}\Omega~
	\mod{d_{2}\Lambda_{\tau}}.
	\end{equation*}
	Now,
	\begin{align*}
	\int_{\delta}\Omega&=\int_{\sigma_{1}}\operatorname{Res}\Omega+
	\int_{\sigma_{2}}\operatorname{Res}\Omega\\
	&=\int_{p}^{t}\operatorname{Res}\Omega+
	\int_{p}^{s}\operatorname{Res}\Omega\\
	&=\int_{O}^{p}\operatorname{Res}\Omega+
	\int_{O}^{t}\operatorname{Res}\Omega+
	\int_{O}^{s}\operatorname{Res}\Omega
	-3\int_{O}^{p}\operatorname{Res}\Omega\\
	&\equiv -3\int_{O}^{p}\operatorname{Res}\Omega~
	\mod{d_{2}\Lambda_{\tau}}.
	\end{align*}
	The last equation holds since \(p\), \(t\),
	and \(s\) are collinear.
\end{proof}
\subsection{Monodromy of the moduli of pairs}
\label{subsec:mono}
For a smooth projective surface \(S\), 
denote by \(\mathrm{Hilb}^{b}(S)\)
the Hilbert scheme of length \(b\) subscheme on \(S\);
it is a smooth algebraic variety of dimension \(2b\) equipped with
a universal family \(\mathcal{U}\to \mathrm{Hilb}^{b}(S)\).
There exists also a birational morphism (a.k.a.~Hilbert--Chow morphism)
\begin{equation*}
\mathrm{Hilb}^{b}(S) \to \operatorname{Sym}^{b}(S),\hspace{0.5cm}
p\mapsto \sum_{x\in S}\operatorname{mult}_{x}(p)\cdot x,
\end{equation*}
where the right hand side is understood as a formal sum.
Note that \(p\in \mathrm{Sym}^{b}(S)\) represents a 
set of unlabelled \(b\) points on \(S\)
and labeling them is equivalent to 
choosing a preimage under the 
canonical surjection \(S^{b}\to \mathrm{Sym}^{b}(S)\).
Moreover, fixing a labeling and deforming \(p\) around gives rise to
a well-defined section of \(S^{b}\to \mathrm{Sym}^{b}(S)\)
as long as \(|\operatorname{Supp}(p)|=b\) remains constant
in the deformation.

Regard \(\mathcal{U}\) as a subscheme in \(S\times
\mathrm{Hilb}^{b}(S)\) and let \(\mathcal{Y}\) be
the blow-up of \(S\times
\mathrm{Hilb}^{b}(S)\)
along \(\mathcal{U}\). 
Then the general fiber of the family \(\mathcal{Y}\to 
\mathrm{Hilb}^{b}(S)\)
is the blow-up of \(S\) along distinct \(b\) points. 
We consider a codimension two closed subscheme
\begin{equation*}
T:=\{p\in\mathrm{Hilb}^{b}(S)~|~|\operatorname{Supp}(p)|\le b-1\}.
\end{equation*}

We can choose a curve \(C\) in \(\mathrm{Hilb}^{b}(S)\)
such that 
\begin{itemize}
	\item \(C\) meets \(T\) transversely and smooth at \(p\);
	\item \(C\) maps isomorphically onto its image 
	under the Hilbert--Chow morphism;
	\item any \(q\in C\setminus \{p\}\) near \(p\)
	represents a set of points in 
	almost general position.
\end{itemize}
Let \(\mathcal{U}\to C\) be the pullback
of the universal family \(\mathcal{Y}\to\mathrm{Hilb}^{b}(S)\).
We may regard \(\mathcal{U}\) as a (reducible) subscheme of \(S\times C\).
Let \(\mathcal{Z}\) be the blow-up of \(S\times C\)
along \(\mathcal{U}\) and \(\mathcal{Z}\to C\)
be the associated family. Note that \(\mathcal{Z}\)
is not smooth; it acquires an ordinary double
point singularity over \(p\in C\).

\begin{rk}
	\label{rk:local-model}
	One can construct the local model in the following way.
	Consider \(\mathbb{C}^{3}\) with coordinate \((x,y,t)\).
	The ideals \(\langle y,x-t\rangle\) and \(\langle y,x+t\rangle\)
	give two lines in \(\mathbb{C}^{3}\)
	whose union is defined by \(\langle y,x^{2}-t^{2}\rangle\).
	
	Denote by \(X\) the blow-up of \(\mathbb{C}^{3}\)
	along the ideal \(\langle y,x^{2}-t^{2}\rangle\);
	\begin{equation*}
	X = \operatorname{Proj} \mathbb{C}[x,y,t][\xi,\eta]\slash 
	\langle \xi y - \eta (x^{2}-t^{2})\rangle
	\end{equation*}
	where \(\operatorname{Proj}\) is taken with
	respect to the \(\mathbb{Z}\)-grading on \(\xi,\eta\)
	with \(\deg(\xi)=\deg(\eta)=1\).
	On the affine chart \(\xi\ne 0\), \(X\)
	is isomorphic to 
	\begin{equation*}
	\operatorname{Spec}\mathbb{C}[x,y,t,\eta']\slash
	\langle y - \eta'(x^{2}-t^{2})\rangle,~\eta'=\eta\slash \xi,
	\end{equation*}
	which is smooth, while on the affine chart \(\eta\ne 0\),
	\(X\) is isomorphic to 
	\begin{equation*}
	\operatorname{Spec}\mathbb{C}[x,y,t,\xi']\slash
	\langle \xi'y - (x^{2}-t^{2})\rangle,~\xi'=\xi\slash \eta,
	\end{equation*}
	which is singular and has a ODP singularity.
	Introduce an \(\mathbb{Z}_{2}\)-action on \(t\) via
	\begin{equation*}
	\mu\cdot t := t^{2},~\mbox{where \(\mu\) is the generator of \(\mathbb{Z}_{2}\)},
	\end{equation*}
	and denote by \(s=t^{2}\) the \(\mathbb{Z}_{2}\)-invariant coordinate. Then
	the quotient defines local model of a smoothing
	of an ordinary double point (at the origin on the affine chart \(\eta\ne 0\)) 
	\begin{equation*}
	\operatorname{Spec}\mathbb{C}[x,y,s,\xi']\slash
	\langle \xi'y - (x^{2}-s)\rangle\to
	\operatorname{Spec}\mathbb{C}[s].
	\end{equation*}
	This is the only affine chart of the local model of \(\mathcal{U}\to C\)
	containing the singularity of the singular fiber
	with \(p\) identifying with \(s=0\).
\end{rk}
We can compute the monodromy of \(\mathcal{Z}\to C\) around \(p\)
by Picard--Lefschetz formula.
Pick a smooth reference fibre \(\mathcal{Z}_{q}\) of \(\mathcal{Z}\to C\)
and denote by \(E_{1},\ldots,E_{b}\) the
exceptional divisors in the blow-up \(\mathcal{Z}_{q}\to S\)
such that \(x_{1}\) and \(x_{2}\) (the images of \(E_{1}\) and 
\(E_{2}\) on \(S\)) collides when \(q\mapsto p\).
Then we have
\begin{equation*}
\mathrm{H}^{2}(\mathcal{Z}_{q},\mathbb{Z})=
\mathrm{H}^{2}(S,\mathbb{Z})\oplus\mathbb{Z}
\langle E_{1},\ldots,E_{b}\rangle.
\end{equation*}
As before, the pullback is omitted. 
Note that \(E_{1}-E_{2}\) is a generator of the vanishing cohomology.
Then Picard--Lefschetz formula says that
the monodromy transformation \(\varpi\colon \mathrm{H}^{2}(\mathcal{Z}_{q},\mathbb{Z})
\to \mathrm{H}^{2}(\mathcal{Z}_{q},\mathbb{Z})\) is given by
\begin{equation*}
\varpi(\gamma) = \gamma + \langle \gamma, E_{1}-E_{2}\rangle (E_{1}-E_{2}),~
\gamma\in\mathrm{H}^{2}(\mathcal{Z}_{q},\mathbb{Z}).
\end{equation*}
This is on the nose the reflection on 
\(\mathrm{H}^{2}(\mathcal{Z}_{q},\mathbb{Z})\)
generated by the root \(E_{1}-E_{2}\).

Recall that rational elliptic surfaces are 
rational surfaces with an elliptic fibration structure admitting a section.
We will need the following proposition.
\begin{prop}
\label{prop:monodromy}
Let \((Y,D)\) be either a pair of a weak del Pezzo surface
and a smooth anti-canonical divisor or a rational elliptic 
surface and an anti-canonical divisor with configuration
\(\mathrm{II}\), \(\mathrm{III}\), \(\mathrm{IV}\),
\(\mathrm{IV}^{\ast}\), \(\mathrm{III}^{\ast}\),
\(\mathrm{II}^{\ast}\), or \(\mathrm{I}_{k}^{\ast}\) with \(k=0,\ldots,4\).
Let \(C\) be a smooth holomorphic curve in \(X:=Y\setminus D\) with \([C]^{2}=-2\).
Then the root reflection associated with \([C]\) on \(\mathrm{H}^{2}(X,\mathbb{Z})\)
can be realized as a monodromy transformation of some deformation of \((Y,D)\).
\end{prop}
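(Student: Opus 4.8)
The plan is to realize the root reflection associated with $[C]$ as an honest monodromy transformation by deforming the pair $(Y,D)$ inside an appropriate moduli space and degenerating so that $[C]$ becomes a vanishing cycle. The strategy is modeled on the local computation in Remark \ref{rk:local-model} together with the Picard--Lefschetz formula recalled above.

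First I would use the classification (Proposition \ref{prop:weak-del-pezzo}, together with Persson's list in the rational elliptic surface case) to pass to a birational model in which $Y$ is exhibited as a blow-up of a minimal surface $S$ (namely $\mathbf{P}^{2}$, or $\mathbf{P}^{1}\times\mathbf{P}^{1}$, or a Hirzebruch surface, or in the elliptic case a suitable rational elliptic surface with the prescribed singular fibre). Since $[C]^{2}=-2$ and $C$ is a holomorphic curve in $X=Y\setminus D$, the class $[C]$ lies in $[D]^{\perp}\subset\operatorname{Pic}(Y)$ and is a root of the relevant lattice (either of type $\mathbb{Z}^{1,9-d}$, $\mathrm{U}_{2}$, or the $E$-type root lattice appearing in the anticanonical-perpendicular); by the Weyl-group action on these root systems I may assume, after applying a composition of geometrically realizable reflections, that $[C]$ is conjugate to a simple root of the form $E_{i}-E_{j}$ for two of the exceptional divisors of the blow-up $Y\to S$. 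It therefore suffices to realize the reflection in $E_{1}-E_{2}$, which is the content of the local model: deform the blown-up points $x_{1},\dots,x_{b}$ along a curve $C$ in $\mathrm{Hilb}^{b}(S)$ meeting the diagonal locus $T$ transversely at a point $p$ where $x_{1}$ and $x_{2}$ collide, as set up in \S\ref{subsec:mono}. The total space $\mathcal{Z}\to C$ acquires an ordinary double point over $p$ with vanishing cohomology generated by $E_{1}-E_{2}$, and the Picard--Lefschetz formula gives precisely $\varpi(\gamma)=\gamma+\langle\gamma,E_{1}-E_{2}\rangle(E_{1}-E_{2})$.

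The remaining point — and the main obstacle — is to check that this degeneration can be carried out \emph{compatibly with the boundary divisor} $D$: one must move the points along a path contained in $D$ (so that after blow-up the proper transform of $D$ remains anticanonical, hence the pair stays of the required type throughout the deformation), and one must verify that the resulting family of complements $X_{t}=Y_{t}\setminus D_{t}$ is a topologically locally trivial family away from $p$ so that the monodromy on $\mathrm{H}^{2}(X,\mathbb{Z})$ is well-defined and agrees with the reflection computed on $\mathrm{H}^{2}(Y,\mathbb{Z})$ via the exact sequence \eqref{eq:homology-of-x}. Since $[C]\in[D]^{\perp}$, its reflection preserves $[D]$ and hence descends to $\mathrm{H}^{2}(X,\mathbb{Z})$, and $\mathrm{H}^{1}(D,\mathbb{Z})$ is monodromy-invariant because $D$ itself undergoes no degeneration; this is what makes the reflection a symmetry of the whole period data rather than just of $\mathrm{H}^{2}(Y)$. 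In the rational elliptic surface cases with reducible anticanonical fibre one argues identically after choosing the blow-up model $S$ so that the two exceptional curves $E_{1},E_{2}$ to be collided lie in the complement of the chosen fibre; the constraint $k\le 4$ for $\mathrm{I}_{k}^{\ast}$ (and the exclusion of $\mathrm{I}_{k}$, $\mathrm{I}_{k}^{\ast}$ with large $k$) is exactly what guarantees that the relevant root $[C]$ is still conjugate under the realizable Weyl group to such a simple root $E_{i}-E_{j}$. Finally, translating back through the Weyl-group element used to normalize $[C]$ — each factor of which is realized by the same local degeneration — yields the reflection in the original class $[C]$ as a monodromy transformation of a deformation of $(Y,D)$, completing the proof.
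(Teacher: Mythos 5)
Your setup is right as far as it goes: the collision of two blown-up points along $D$, the ordinary double point in the total space, and the Picard--Lefschetz formula do realize the reflection in a class of the form $E_{i}-E_{j}$, and your observations that the points must move inside $D$ and that the reflection descends to $\mathrm{H}^{2}(X,\mathbb{Z})$ because $[C]\in[D]^{\perp}$ and $\mathrm{H}^{1}(D,\mathbb{Z})$ is monodromy-invariant are correct. The gap is in the reduction step. You claim that, after applying ``geometrically realizable reflections,'' any root $[C]$ may be conjugated to $E_{i}-E_{j}$, and that ``each factor is realized by the same local degeneration.'' But the only reflections the collision degeneration produces are those in roots $E_{i}-E_{j}$, and these generate only the symmetric group $S_{b}$, whose orbit on the root system is exactly the set $\{E_{i}-E_{j}\}$. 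A $(-2)$-curve in $X$ need not have such a class: by the classification the paper invokes (\cite{MS22}), $C$ can be the proper transform of a line through three of the blown-up points (class $H-E_{i}-E_{j}-E_{k}$), of a conic through six, or of a cubic through eight with a node at one of them, and none of these is $S_{b}$-conjugate to $E_{1}-E_{2}$. Appealing to the full Weyl group instead is circular, since realizability of the conjugating reflections is precisely what is being proved; and the naive degeneration making, say, three points collinear does not create a singularity of the total space, hence produces no monodromy at all.

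The missing idea is a change of blow-down structure. In the paper's proof, for $[C]=H-E_{i}-E_{j}-E_{k}$ one first contracts the $(-1)$-curve $F=H-E_{i}-E_{j}$ (Castelnuovo) to a point $x$ on a new smooth surface $Y'$; with respect to the new blow-up realization the class becomes $F-E_{k}$, a difference of exceptional classes, and the collision of $x$ with the image of $x_{k}$ now applies. The conic and cubic cases are handled by iterating this. Likewise, in the rational elliptic surface case $C$ is a component of a singular fibre, and one must exhibit an explicit chain of $(-1)$-curve contractions (starting from a section) under which $C$ eventually becomes a $(-1)$-curve and is contracted, before invoking the degeneration via $\mathrm{Hilb}^{2}$ of the resulting surface; your remark that the bound $k\le 4$ for $\mathrm{I}_{k}^{\ast}$ is what guarantees Weyl conjugacy to $E_{i}-E_{j}$ is not correct --- that bound comes from Persson's classification of singular fibres and plays no role here. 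Without the contraction step your argument establishes the proposition only for curves of class $E_{i}-E_{j}$.
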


\begin{proof}
	Let \(Y\) be a weak del Pezzo surface of degree \(d\)
	and \(D\in |-K_{Y}|\) be smooth;
	\((Y,D)\) is a blowup of \(\mathbf{P}^{2}\) along \(b=9-d\) points
	on a smooth cubic in \(\mathbf{P}^{2}\). Denote by 
	\(E_{1},\ldots,E_{b}\) the pullback of 
	the exceptional divisors, \(x_{1},\ldots,x_{b}\)
	be the corresponding points on \(\mathbf{P}^{2}\)
	and by \(H\)
	the pullback of the hyperplane class on \(\mathbf{P}^{2}\).
	By \cite{MS22}*{Lemma 2.8}, \(C\) is given by
	\begin{itemize}
		\item[(1)] the proper transform of \(E_{i}\) over which
		there exists exactly one \(E_{j}\) lying over;
		\item[(2)] the proper transform of a line in \(\mathbf{P}^{2}\)
		passing through exactly three points in \(\{x_{1},\ldots,x_{b}\}\);
		\item[(3)] the proper transform of a conic in \(\mathbf{P}^{2}\)
		passing through exactly six points in \(\{x_{1},\ldots,x_{b}\}\);
		\item[(4)] the proper transform of a cubic in \(\mathbf{P}^{2}\)
		passing through exactly eight points in \(\{x_{1},\ldots,x_{b}\}\)
		such that one of which is the singular point of the cubic.
	\end{itemize}
	Case (1) occurs when \(b\ge 2\), Case (2)
	occurs when \(b\ge 3\), Case (3) occurs when \(b\ge 6\) and Case
	(4) appears only when \(b=8\).

	The root reflection from (1) can be realized 
	by collapsing \(x_{i}\) and \(x_{j}\).
	For (2), we begin with \(\mathbf{P}^{2}\) and
	pick a line \(H\) joining \(x_{i}\) and \(x_{j}\).
	Consider \(F:=H-E_{i}-E_{j}\) (the proper transform of \(H\) in \(Y\)).
	Then \(F\) is a \((-1)\) curve on the smooth surface \(Y\).
	By Castelnuovo's theorem, we can contract \(F\) to \(x\in Y'\) for
	a smooth surface \(Y'\). Since \(x_{k}\notin H\), \(x_{k}\)
	is mapped to a point \(x_{k}'\in Y'\). Note that the class \(H-E_{i}-E_{j}-E_{k}\)
	is equal to \(F-E_{k}\) in \(\mathrm{H}^{2}(Y,\mathbb{Z})\).
	Regarding \(E_{k}'\) as the exceptional divisor over \(x_{k}'\),
	we see that the associated root reflection can be realized as the
	monodromy transformation of the degeneration by collapsing \(x_{k}'\) and \(x\).
	The remaining cases can be treated in a similar way.

	Let \(Y\) be a rational elliptic 
	surface and \(D\) an anti-canonical divisor with configuration
	described in the Proposition. It is known that \(Y\)
	is a blow-up of the base locus of a pencil of cubics
	on \(\mathbf{P}^{2}\) with a smooth member.
	Let \(\pi\colon Y\to\mathbf{P}^{1}\) be the 
	associated elliptic fibration. We also assume that
	\(D\) is the fiber at \(\infty\in\mathbf{P}^{1}\).
  Let \(C\) be as in the proposition.
	Then \(C\) must be an irreducible component of
	a fiber of \(\pi\). There exists a sequence of 
	blow-downs \((-1)\) curves
	\begin{equation*}
	Y=Z_{0}\to Z_{1}\to\cdots\to Z_{k}=:Z
	\end{equation*}
	satisfying the following properties
	\begin{itemize}
	\item the image of \(C\) in \(Z_{k-2}\) (still denoted by \(C\)) remains a \((-2)\) curve;
	\item \(Z_{k-2}\to Z_{k-1}\) is a contraction of 
	a \((-1)\) curve \(F\) with \(F\cap C\ne\emptyset\),
	and therefore \(C\) becomes a \((-1)\) curve in \(Z_{k-1}\);
	\item \(Z_{k-1}\to Z\) is
	given by contracting \(C\). 
	\end{itemize}

	Moreover, by Castelnuovo's theorem,
	\(Z_{i}\) is a smooth projective variety for each \(i=0,\ldots,k\).
	(Indeed, one can begin with contracting a section of \(\pi\).
	Since any section must meet the fiber containing \(C\) and 
	every fiber is connected, one can continue the process to reach \(C\).)
	Using \(\mathrm{Hilb}^{2}(Z)\) the Hilbert scheme of length \(2\)
	subscheme on \(Z\),
	from the discussion right before Proposition \ref{prop:monodromy},
	we can find a suitable degeneration
	whose monodromy transformation equals the 
	root reflection constructed from \([C]\).
	This completes the proof.
\end{proof}

\subsection{Surjectivity of period maps of \texorpdfstring{$\mathrm{ALH}^{\ast}$}{ALH*} gravitational instantons}
\label{subsec:alh*-surj}
   Any holomorphic curve in $X$ is a $(-2)$-curve in $Y$ by adjuction formula. 
	We first recall a theorem of Tian--Yau \cite{TY}. We say a cohomology class $[\omega]\in \mathrm{H}^2(X,\mathbb{R})$ 
	satisfies the condition ($\dagger$) if $[\omega]$ is positive on every $(-2)$-curve of $Y$ contained in $X$ and there exists a K\"ahler class $[\omega_Y]$ on $Y$ such that $[\omega]=[\omega_Y]|_X$.
	
	\begin{thm}\label{TY} 
		Given $c>0$ and $[\omega]\in \mathrm{H}^2(X,\mathbb{R})$ satisfies the condition {\rm($\dagger$)}, then there exists a Ricci-flat metric $\omega$ in the given cohomology class on $X$ with
			 $2\omega^2= \Omega\wedge\bar{\Omega}$,
		 where $\Omega$ is a meromorphic volume form on $Y$ with a simple pole along $D$ such that $\operatorname{Res}_D\Omega=c\Omega_D$. 
	\end{thm}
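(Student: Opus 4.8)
The plan is to reduce the assertion to a complex Monge--Amp\`ere equation on the non-compact surface $X$ and to solve it by the continuity method, following Tian--Yau's construction of complete Calabi--Yau metrics. First I would produce a smooth K\"ahler form $\omega_0$ on $X$ in the class $[\omega]$ that coincides with the Calabi ansatz $\omega_{\mathcal{C}}$ outside a compact set. Condition $(\dagger)$ is what makes this possible: the K\"ahler class $[\omega_Y]$ on $Y$ restricting to $[\omega]$ supplies a global form, while positivity of $[\omega]$ on the finitely many $(-2)$-curves of $Y$ contained in $X$ removes the only obstruction to strict positivity after a $\sqrt{-1}\partial\bar\partial$-correction. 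Near $D$ one identifies a punctured neighbourhood of $D$ in $Y$ with a neighbourhood of the zero section of $N_{D/Y}$, a degree-$d$ positive line bundle over the elliptic curve $D$, which is precisely the Calabi model, and interpolates between $\omega_{\mathcal{C}}$ and the global form. Since $2\omega_{\mathcal{C}}^2=\Omega_{\mathcal{C}}\wedge\bar\Omega_{\mathcal{C}}$ and $\Omega$ is normalised so that $\operatorname{Res}_D\Omega=c\,\Omega_D$, the density $f$ defined by $\tfrac12\,\Omega\wedge\bar\Omega=e^{f}\omega_0^2$ can be taken to vanish near $D$, and in any case decays faster than any negative power of the distance function $r$.

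The sought metric is $\omega=\omega_0+\sqrt{-1}\partial\bar\partial\varphi$ solving $(\omega_0+\sqrt{-1}\partial\bar\partial\varphi)^2=e^{f}\omega_0^2$ with $\varphi\to 0$ at infinity. I would set up weighted H\"older spaces $C^{k,\alpha}_{\delta}(X)$ modelled on the $\mathrm{ALH}^{\ast}$ end, prove the Sobolev inequality for $(X,\omega_{\mathcal{C}})$ --- this is where the $r^{4/3}$ volume growth and the curvature decay of the Calabi model enter --- and, by separating variables over the cross-section of the end (a circle bundle of Euler number $d$ over $D$, i.e.\ a nilmanifold, whose metric collapses at different polynomial rates in the radial variable), compute the indicial roots and conclude that $\Delta_{\omega_0}$ is an isomorphism between the corresponding weighted spaces for all weights $\delta$ in a suitable interval avoiding those roots. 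Equivalently one may verify the hypotheses of the existence theorem of \cite{TY} directly; either way the weighted analysis at the end is needed to pin down the asymptotics.

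Running the continuity method on $\omega_t^2=e^{tf}\omega_0^2$, openness follows at once from the previous step, since the linearisation is $\Delta_{\omega_0}$ plus a fast-decaying zeroth-order operator. For closedness I would establish, in order: a uniform $C^0$ bound for $\varphi$ by Moser iteration against the Sobolev inequality, using that $f$ decays (there is no maximum-principle shortcut on a non-compact manifold); Yau's second-order estimate, using that $\omega_0$ has bounded bisectional curvature near infinity; and the Evans--Krylov and Schauder estimates for the higher derivatives, after which one bootstraps the decay of $\varphi$ in the weighted spaces. This yields a complete K\"ahler metric $\omega$ in $[\omega]$ with $2\omega^2=\Omega\wedge\bar\Omega$, hence $\Ric(\omega)=0$ because $\Omega|_{X}$ trivialises $K_{X}$, and the weighted decay of $\varphi$ gives $\|\omega-\omega_{\mathcal{C}}\|_{g_{\mathcal{C}}}=O(r^{-k-\varepsilon})$; thus $(X,\omega,\Omega)$ is an $\mathrm{ALH}^{\ast}$ gravitational instanton. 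The hard part will be the linear analysis on the end together with the $C^0$ estimate: the cross-section of the $\mathrm{ALH}^{\ast}$ model is a collapsing nilmanifold rather than a round sphere, so its indicial roots --- and hence the admissible weights and the sharp decay rate --- must be computed by hand, and the non-compactness forces the $C^0$ bound through the Sobolev inequality, so establishing that inequality and the bounded geometry of $\omega_0$ near $D$ is the technical heart of the argument.
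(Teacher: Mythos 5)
The paper does not prove this statement at all: Theorem \ref{TY} is \emph{recalled} as a theorem of Tian--Yau \cite{TY} (refined by the later asymptotic analyses of Hein, Collins--Jacob--Lin, and Hein--Sun--Viaclovsky--Zhang to prescribe the K\"ahler class and to identify the end with the Calabi model), and is used as a black box in the proof of Theorem \ref{main1}. Your proposal instead reconstructs the Tian--Yau argument from scratch, and the strategy you describe --- glue a background form in the class $[\omega]$ to the Calabi ansatz near $D$, reduce to a complex Monge--Amp\`ere equation with rapidly decaying right-hand side, and run a continuity method in weighted spaces --- is indeed how the cited results are proved, so there is no wrong idea here; the trade-off is that you are re-proving a substantial body of prior work rather than quoting it, which is all the paper needs to do.

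Two places in your sketch are glibber than the actual difficulty warrants. First, the unweighted Sobolev inequality you propose to ``prove for $(X,\omega_{\mathcal{C}})$'' fails on an $\mathrm{ALH}^{\ast}$ end: the volume growth is only $r^{4/3}$ and the torus fibres collapse, so the Moser iteration for the $C^0$ bound must be run against a weighted Sobolev inequality (as in Tian--Yau) or replaced by Hein's barrier scheme; as literally stated that step would break. Second, the final assertion that the weighted decay of $\varphi$ yields $\|\omega-\omega_{\mathcal{C}}\|_{g_{\mathcal{C}}}=O(r^{-k-\epsilon})$ for every $k$ --- which is what Definition \ref{def: ALH*} requires --- is not a routine consequence of choosing a good weight once; the indicial-root analysis gives decay at one rate, and upgrading it to all polynomial orders (in fact to the sharp rate) is itself the content of \cite{Hein} and \cite{HSVZ2}. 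Neither point invalidates your approach, but both are theorems rather than steps, and the honest version of your proof would cite them exactly where the paper does.
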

 \begin{proof}[Proof of Theorem \ref{main1}]
  	From Theorem \ref{thm:surjectivity-of-periods}, there exists a del Pezzo surface $Y$ of degree $d$ and a smooth anti-canonical divisor $D$ with modulus $\tau$ and a meromorphic volume form $\Omega$ on $Y$ with a simple pole along $D$ and $\operatorname{Res}_D\Omega=c\Omega_D$ such that there exists a diffeomorphism $\mu\colon X_0\rightarrow X$ with $\mu^*[\Omega]=[\Omega_0]$, where $X=Y\setminus D$. 
 	
 	If $(\mu^{-1})^*[\omega_0]$ satisfies the condition ($\dagger$), then the theorem follows from Theorem \ref{TY} directly. Otherwise, from Proposition \ref{prop:monodromy} and \cite{Dol}*{Theorem 2.1} there exists a diffeomorphism $g\colon X\rightarrow X$ such that $g^*(\mu^{-1})^*[\omega_0]$ satisfies the condition ($\dagger$). Thus, there exists a Ricci-flat metric $\omega$ in the cohomology class $g^*(\mu^{-1})^*[\omega_0]$ by Theorem \ref{TY}. Notice that the diffeomorphism is induced by the compositions of monodromies in the moduli space of pairs $(Y,D)$. In particular, the Picard--Lefchetz formula implies that $g^*[\Omega]=[\Omega]$ since $[\Omega]$ vanishes on every $(-2)$-curve in $X$. Then $(X,\omega ,\Omega,g^{-1}\circ \mu)$ is the marked $\mathrm{ALH}^{\ast}_d$ gravitational instantons realizing the given cohomology classes and finish the proof of the theorem. 
 	
 \end{proof}  

\section{Period domains for 
\texorpdfstring{$\mathrm{ALG}$}{ALG} and
\texorpdfstring{$\mathrm{ALG}^{\ast}$}{ALG*} gravitational instantons}
\label{sec:alg}
Recall that 
rational elliptic surfaces are rational surfaces with an elliptic fibration structure\footnote{Here we use the definition that an elliptic fibration admits a section.}.
It is well known that any rational elliptic surface is a blow up of the base points of a pencil of cubics with a smooth member in $\mathbf{P}^2$, i.e., given a rational elliptic surface $Y$ and a fibre $D$, the pair $(Y,D)$ can be derived from blow-ups of $\mathbf{P}^2$ on a possibly singular cubic $\underline{D}$ and $D$ contains the proper transform of $\underline{D}$. 

         
\begin{defn}
An \emph{ALG pair} is a log Calabi--Yau pair \((Y,D)\) with 
\(Y\) a smooth rational elliptic surface and 
\(D\in |-K_{Y}|\) a divisor of type
$\mathrm{II},
\mathrm{III},\mathrm{IV},\mathrm{IV}^{\ast},
\mathrm{III}^{\ast}$, $\mathrm{II}^{\ast}$, or \(\mathrm{I}_{0}^{\ast}\).
An \emph{ALG pair of type \(\mathrm{Z}\)} is an ALG pair \((Y,D)\)
such that \(D\) is of type \(\mathrm{Z}\).
A \emph{marked ALG pair} is an ALG pair \((Y,D)\)
together with a basis \(\mathcal{B}\) of \(\mathrm{H}_{2}(X,\mathbb{Z})\).
Finally, a \emph{marked ALG pair of type \(\mathrm{Z}\)} is a marked ALG pair
of type \(\mathrm{Z}\) with a basis \(\mathcal{B}\) of \(\mathrm{H}_{2}(X,\mathbb{Z})\).

Similarly, we can define the notions for \(\mathrm{ALG}^{\ast}\) pairs. In which case,
the configurations of \(D\) can be \(\mathrm{I}_{1}^{\ast}\), 
\(\mathrm{I}_{2}^{\ast}\), \(\mathrm{I}_{3}^{\ast}\), and
\(\mathrm{I}_{4}^{\ast}\).
\end{defn}
         
\subsection{Constructions of \texorpdfstring{\((Y,D)\)}{(Y,D)} for \texorpdfstring{\(\mathrm{ALG}\)}{ALG} and 
\texorpdfstring{\(\mathrm{ALG}^{\ast}\)}{ALG*} gravitational instantons}
\label{subsec:construction}
In this subsection, we will give constructions 
of families of marked ALG and \(\mathrm{ALG}^{\ast}\) 
pairs of various types and study their period maps. 

We begin with a general discussion.
Let \((Y,D)\) be either an ALG pair or an \(\mathrm{ALG}^{\ast}\) pair
and \(X=Y\setminus D\) be the complement. In any case,
we have
\(\mathrm{H}^{1}(D,\mathbb{C})=0\).
 Let \(i\colon D\to Y\) be the closed embedding 
 and \(j\colon X\to Y\) be the open embedding.
 We have the short exact sequence
 \begin{equation}
 0\to j_{!}j^{-1}\mathbb{Q} \to \mathbb{Q} \to i_{\ast} i^{-1}\mathbb{Q}\to 0.
 \end{equation}
 Taking compactly supported cohomology yields the 
 long exact sequence
 \begin{equation}
 \label{eq:alg-alg*-es}
 \cdots\to 0=\mathrm{H}_{\mathrm{c}}^{1}(D,\mathbb{Q})\to 
 \mathrm{H}_{\mathrm{c}}^{2}(X,\mathbb{Q})\to 
 \mathrm{H}_{\mathrm{c}}^{2}(Y,\mathbb{Q})\to
 \mathrm{H}_{\mathrm{c}}^{2}(D,\mathbb{Q})\to\cdots.
 \end{equation}
 It then follows that the homology group \(\mathrm{H}_{2}(X,\mathbb{Q})
 \cong \mathrm{H}_{\mathrm{c}}^{2}(X,\mathbb{Q})\)
 can be identified with the kernel
 of the signed intersection map
 \begin{equation}
\gamma\mapsto (\gamma\cdot D_{i})_{i=1}^{k}
 \end{equation}
 where \(D=\sum_{i=1}^{k}m_{i}D_{i}\) and \(D_{i}\)'s are irreducible components.
 By the vanishing of \(\mathrm{H}^{1}(D,\mathbb{C})\),
 any \(\gamma\in \mathrm{H}_{2}(Y,\mathbb{C})\)
 satisfying \(\gamma\cdot D_{i} = 0\) for all \(i\) can be lifted
 \emph{uniquely} to \(\mathrm{H}_{2}(X,\mathbb{C})\).
 
 To construct a basis \(\mathcal{B}\), we can therefore
 pick any basis of 
 \begin{equation*}
 \{\gamma\in \mathrm{H}_{2}(Y,\mathbb{Z})~|~\gamma\cdot D_{i}=0,~i=1,\ldots,k\}
 \end{equation*}
 and lift it to \(\mathrm{H}_{2}(X,\mathbb{Z})\).
 
 Let \(\pi\colon (\mathcal{Y},\mathcal{D})\to\mathcal{M}\) be a deformation family of a marked ALG pair \((Y,D)\)
 and \(\mathfrak{r}\in\mathcal{M}\) be a reference pair. 
 We denote the reference pair by \((Y_{\mathfrak{r}},D_{\mathfrak{r}})\)
 and its complement by \(X_{\mathfrak{r}}:=Y_{\mathfrak{r}}\setminus D_{\mathfrak{r}}\). 
 Let \(\Omega\) be a section of \(\pi_{\ast}\Omega_{\mathcal{Y}\slash\mathcal{M}}^{2}(\mathcal{D})\).
 Assuming \(\mathcal{M}\) is simply connected, we can define
 the period integrals of \((Y,D)\) to be the function
 \begin{equation}
 \mathcal{M}\ni t\to \int_{(\varphi_{\Gamma})_{\ast}\gamma}\Omega_{t}
 \end{equation}
 where \(\varphi_{\Gamma}\colon X_{\mathfrak{r}}
 \to X\) is the diffeomorphism induced
 by a path \(\Gamma\) connecting \(t\) and 
 \(\mathfrak{r}\) in \(\mathcal{M}\),
 \(\gamma\in \mathrm{H}_{2}(X_{\mathfrak{r}},\mathbb{Z})\),~
 and \(\Omega_{t}\)
 is the restriction of \(\Omega\) to the fibre \((Y_{t},D_{t})\).
 This is well-defined since \(\mathcal{M}\) is simply connected.
 When \(\pi_{1}(\mathcal{M})\) is non-trivial, the period integrals above form a local system
 on \(\mathcal{M}\) and in general have non-trivial monodromies.
 
 To define period integrals in general cases, 
 we must keep track of the trivialization, i.e.,
 the trivialization from the curve \(\Gamma\).
\begin{defn}
Let \(\pi\colon (\mathcal{Y},\mathcal{D})\to\mathcal{M}\) be a deformation family of a marked ALG 
or or \(\mathrm{ALG}^{\ast}\) pair \((Y,D)\)
and \(\mathfrak{r}\in\mathcal{M}\) be a reference pair. 
Let \(\Omega\) be a section of \(\pi_{\ast}\Omega_{\mathcal{Y}\slash\mathcal{M}}^{2}(\mathcal{D})\).
For \(t\in\mathcal{M}\),
the period integral is defined to be the multi-valued function
\begin{equation}
\mathcal{M}\ni t\mapsto\int_{(\varphi_{\Gamma})_{\ast}\gamma}\Omega_{t}
\end{equation}
where \(\varphi_{\Gamma}\colon X_{\mathfrak{r}}
 \to X\) is the diffeomorphism induced
 by a path \(\Gamma\) connecting \(t\) and 
 \(\mathfrak{r}\) in \(\mathcal{M}\),
 \(\gamma\in \mathrm{H}_{2}(X_{\mathfrak{r}},\mathbb{Z})\),~
 and \(\Omega_{t}\)
 is the restriction of \(\Omega\) to the fibre \((Y_{t},D_{t})\).
 Let \(\mathcal{B}_{\mathfrak{r}}:=\{\gamma_{\mathfrak{r},1},
 \ldots,\gamma_{\mathfrak{r},m}\}\) be
 a basis of \(\mathrm{H}_{2}(X_{\mathfrak{r}},\mathbb{Z})\). 
 The \emph{period map} \(\mathcal{P}_{\mathcal{B}_{\mathfrak{r}}}(\Omega)\) is
 a multi-vector-valued function 
 \begin{equation}
 \mathcal{M}\ni t\mapsto \left(
 \int_{(\varphi_{\Gamma})_{\ast}\gamma_{\mathfrak{r},1}}\Omega_{t},\ldots,
 \int_{(\varphi_{\Gamma})_{\ast}\gamma_{\mathfrak{r},m}}\Omega_{t}\right)
\in\mathbb{C}^{m}.
 \end{equation}
 For simplicity, when the context is clear, we drop 
 \(\mathcal{B}_{\mathfrak{r}}\) and \(\Omega\) in the notation.
\end{defn}

Note that \(\operatorname{Im}(\mathcal{P})\) always lies in a hyperplane in \(\mathbb{C}^{m}\)
determined by the fibre class. More precisely, let \(\mathcal{B}_{\mathfrak{r}}
=\{\gamma_{\mathfrak{r},1},\ldots,\gamma_{\mathfrak{r},m}\}\)
and assume that 
\begin{equation*}
[f] = \sum_{i=1}^{m} a_{i}\gamma_{\mathfrak{r},i}
\end{equation*}
where \([f]\) is the homology class of a fibre
in the rational elliptic surface \(Y_{\mathfrak{r}}\). Then 
\begin{equation*}
\sum_{i=1}^{m}a_{i}\int_{\gamma_{\mathfrak{r},i}}\Omega_{t} = 0.
\end{equation*}
That is, if \((y_{1},\ldots,y_{m})\) denotes the coordinates on \(\mathbb{C}^{m}\),
we have
\begin{equation*}
\operatorname{Im}(\mathcal{P})\subset 
\left\{(y_{1},\ldots,y_{m})~\Big|~\sum_{i=1}^{m} a_{i}y_{i}=0\right\}.
\end{equation*}
The main result in this subsection is the following theorem.
\begin{thm}\label{thm: surj (2,0)-form}
Let notation be as above. 
Let \((Y,D\)) be
an \(\mathrm{ALG}\) or an 
\(\mathrm{ALG}^{\ast}\) pair. Then
there exist a family \(\pi\colon (\mathcal{Y},\mathcal{D})\to
\mathcal{M}\) of deformation of \((Y,D)\) and \(\Omega\in\Omega_{\mathcal{Y}\slash\mathcal{M}}^{2}(\mathcal{D})\) 
such that 
\begin{equation}
\label{eq:period-map-surjective}
\operatorname{Im}(\mathcal{P}) =
\left\{(y_{1},\ldots,y_{m})~\Big|~\sum_{i=1}^{m} a_{i}y_{i}=0\right\}.
\end{equation}  

\end{thm}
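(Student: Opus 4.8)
The plan is to mimic the inductive, blow-up-by-blow-up strategy of Theorem \ref{thm:surjectivity-of-periods}, but now keeping track of \emph{all} of \(\mathrm{H}_{2}(X,\mathbb{Z})\) rather than only the lattice spanned by the cycles coming from \(D\) and a partial chain of exceptional curves. First I would fix a convenient birational model: by the discussion opening \S\ref{sec:alg}, the pair \((Y,D)\) is obtained from \(\mathbf{P}^{2}\) by blowing up the base locus of a pencil of cubics with a smooth member, so \(Y\) carries an elliptic fibration \(\pi\colon Y\to\mathbf{P}^{1}\) with \(D\) the fibre over \(\infty\). The kernel-of-intersection description of \(\mathrm{H}_{2}(X,\mathbb{Q})\) from \eqref{eq:alg-alg*-es} shows that \(\mathrm{H}_{2}(X,\mathbb{Z})\) is built from (i) the vanishing cycles/transport cycles attached to the cubic \(\underline{D}\subset\mathbf{P}^{2}\) (these are the analogues of \(\alpha_{\mathfrak{r}},\beta_{\mathfrak{r}}\), but because \(\mathrm{H}^{1}(D,\mathbb{C})=0\) in the \(\mathrm{ALG}/\mathrm{ALG}^{\ast}\) case the genuinely new transcendental directions come from the configuration of the singular fibre \(D\)), together with (ii) differences \(E_{i}-E_{j}\) of exceptional divisors and (iii) classes of the form \(H-E_{i}-E_{j}-E_{k}\), i.e. proper transforms of lines, conics, cubics through the blown-up points — exactly the \((-2)\)-class generators. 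Lifting these to \(\mathrm{H}_{2}(X,\mathbb{Z})\) gives a distinguished basis \(\mathcal{B}_{\mathfrak{r}}\), and one records the unique relation \([f]=\sum a_{i}\gamma_{\mathfrak{r},i}\) coming from the fibre class (which is why \(\operatorname{Im}(\mathcal P)\) is a priori contained in the stated hyperplane).

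Next I would build the deformation family \(\mathcal{M}\) as the space of pencils of cubics (equivalently, the space of positions of the \(b\) blown-up points, relative to the degeneration locus that produces the prescribed Kodaira fibre type \(D\)), together with the modulus of the section realizing \(\Omega\); \(\Omega\) is taken fibrewise to be the meromorphic \(2\)-form on \(Y_{t}\) with a simple pole along \(D_{t}\), normalized so that \(\operatorname{Res}_{D_{t}}\Omega\) is a fixed meromorphic/holomorphic \(1\)-form on the (fixed-type) fibre. The key computation is then a residue/Abel–Jacobi calculation exactly parallel to Lemma \ref{lem:period-calculation}: the period \(\int_{\gamma}\Omega\) over a transport-type cycle attached to a chain of arcs in \(\underline{D}\) equals a sum \(\sum\int \operatorname{Res}\Omega\) along those arcs, so by sliding the blown-up points \(x_{i}\) along \(\underline{D}\) (and using collinearity relations, as in the proof of Lemma \ref{lem:period-calculation}) one shows that the map sending a point of \(\mathcal M\) to its period vector surjects onto a full-dimensional subset of the hyperplane \(\{\sum a_{i}y_{i}=0\}\) — the point count works because \(\dim_{\mathbb{C}}\mathcal M\) matches \(m-1\) once one includes the residue modulus. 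To upgrade "full-dimensional image" to "all of the hyperplane" I would invoke the monodromy: Proposition \ref{prop:monodromy} shows every root reflection on \(\mathrm{H}^{2}(X,\mathbb{Z})\) attached to a \((-2)\)-curve is realized by a monodromy of a deformation of \((Y,D)\), and since \(\Omega\) pairs to zero with every such \((-2)\)-class (adjunction, as used in the proof of Theorem \ref{main1}), these reflections act trivially on \(\operatorname{Im}(\mathcal P)\) while their action on the base \(\mathcal M\) is rich enough to propagate the local surjectivity across the whole hyperplane. Running the construction inductively in the number of blow-ups \(b\) — starting from the base case analogous to parts (a),(d) of Theorem \ref{thm:surjectivity-of-periods}, where \(Y=\mathbf{P}^{2}\) blown up minimally and the hyperplane is small — and treating each Kodaira type \(\mathrm{II},\mathrm{III},\mathrm{IV},\mathrm{IV}^{\ast},\mathrm{III}^{\ast},\mathrm{II}^{\ast},\mathrm{I}_{0}^{\ast},\ldots,\mathrm{I}_{4}^{\ast}\) by the appropriate choice of degenerate cubic \(\underline{D}\), yields \eqref{eq:period-map-surjective}.

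The main obstacle, and where I would spend the most care, is the bookkeeping for the singular fibre \(D\) itself: unlike the \(\mathrm{ALH}^{\ast}\) case where \(D\) is a smooth elliptic curve and \(\operatorname{Res}\Omega\) lives on a genus-one curve with a clean period lattice \(\Lambda_{\tau}\), here \(D\) is a reducible/cuspidal Kodaira fibre, so \(\operatorname{Res}_{D}\Omega\) is a \(1\)-form on a nodal or cuspidal curve and the "period lattice" is degenerate (a \(\mathbf{G}_{m}\)- or \(\mathbf{G}_{a}\)-extension). One must check that the residue/Abel–Jacobi argument still produces enough freedom — i.e. that the arcs along \(\underline D\) used to build transport cycles genuinely sweep out the relevant (possibly non-compact) Jacobian — and that the normalization of \(\Omega\) can be chosen compatibly across the family \(\mathcal M\) without monodromy obstruction. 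A secondary technical point is ensuring the chosen curve \(C\) (hence the family \(\mathcal Z\to C\)) in the monodromy step stays within the \(\mathrm{ALG}/\mathrm{ALG}^{\ast}\) locus, i.e. that the degeneration collapsing two exceptional points does not accidentally change the Kodaira type of \(D\); this is handled exactly as in the last paragraph of the proof of Proposition \ref{prop:monodromy}, by contracting a section first and working inside a fibre, so that all blow-downs occur away from \(D\).
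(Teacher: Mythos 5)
Your first half — explicit birational models for each Kodaira type, a distinguished basis of \(\mathrm{H}_{2}(X,\mathbb{Z})\) lifted from \([D]^{\perp}\) (uniquely, since \(\mathrm{H}^{1}(D,\mathbb{C})=0\)), a fixed meromorphic \(2\)-form with pole along \(\underline{D}\), and residue computations expressing the periods in terms of the positions of the blown-up points on \(\underline{D}\) — is exactly what the paper does in \S\ref{subsec:construction}, type by type. But your final step, upgrading a ``full-dimensional image'' to the entire hyperplane via Proposition \ref{prop:monodromy}, has a genuine gap. First, the claim that the root reflections act trivially on \(\operatorname{Im}(\mathcal{P})\) because \(\Omega\) pairs to zero with \((-2)\)-classes is not correct: \([\Omega]\) vanishes only on \((-2)\)-classes represented by \emph{holomorphic} curves, whereas the classes \(E_{i}-E_{j}\) entering the Picard--Lefschetz reflections are holomorphic only at the degenerate members of the family; at a generic member \(\int_{E_{i}-E_{j}}\Omega=x_{i}-x_{j}\neq 0\). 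Second, even granting that \(\operatorname{Im}(\mathcal{P})\) is invariant under the Weyl group generated by these reflections, an open Weyl-invariant subset of the hyperplane need not equal the hyperplane, so the ``propagation'' does not close the argument. The paper never uses Proposition \ref{prop:monodromy} for the \((2,0)\)-periods; that proposition enters only later, to move the K\"ahler class into the cone where Theorem \ref{TY} (resp.\ Theorem \ref{thm: Hein}) applies.

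What actually closes the argument in the paper is a global \emph{inversion} of the period map rather than a dimension count: given any target vector in the hyperplane, the residue formulas (Lemmas \ref{lem:period-type-ii}, \ref{lem:period-type-iii}, \ref{lem:period-type-iv}, and their analogues) determine the blow-up points \(x_{1},\ldots,x_{9}\) on \(\underline{D}\setminus\{p\}\) outright, and the linear constraint \(\sum a_{i}y_{i}=0\) translates into the condition that these points lie on a cubic. One must then \emph{prove existence} of a pencil of cubics through this configuration containing a smooth member — by Max Noether's fundamental theorem for type \(\mathrm{II}\), by explicitly solving for the cubic's coefficients for types \(\mathrm{III}\), \(\mathrm{IV}\), \(\mathrm{I}_{0}^{\ast}\), \(\mathrm{I}_{1}^{\ast}\), \(\mathrm{I}_{2}^{\ast}\), and by direct parametrization of the (small) families for the \(\ast\)-types with non-reduced \(\underline{D}\), plus Bertini to produce a smooth member. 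This existence verification for \emph{every} point of the hyperplane is the substantive content your proposal omits; without it you only get that the period map is dominant onto the hyperplane, not surjective. Your flagged concern about the degenerate generalized Jacobian is real but is handled in the paper precisely by these explicit coordinate computations (e.g.\ iterated blow-ups and residues of \(\mathrm{d}u\wedge\mathrm{d}v\slash u^{3}\) for the triple-line models of \(\mathrm{III}^{\ast}\) and \(\mathrm{IV}^{\ast}\)), not by an abstract Abel--Jacobi argument.
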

The rest of the subsection is devoted to 
proving Theorem \ref{thm: surj (2,0)-form}. 
To achieve this, we will 
\begin{itemize}
\item construct for each type 
a reference marked ALG or \(\mathrm{ALG}^{\ast}\) pair \((Y_{\mathfrak{r}},D_{\mathfrak{r}})\), i.e.,
\begin{itemize}
\item a pencil of cubics
in \(\mathbf{P}^{2}\) giving the ALG or \(\mathrm{ALG}^{\ast}\) pair \((Y_{\mathfrak{r}},D_{\mathfrak{r}})\) 
of the desired type after resolving 
the base locus;
\item a basis \(\mathcal{B}_{\mathfrak{r}}\) of \(\mathrm{H}_{2}(X_{\mathfrak{r}},\mathbb{Z})\) 
where \(X_{\mathfrak{r}}:=Y_{\mathfrak{r}}\setminus D_{\mathfrak{r}}\)
is the complement;
\item a choice of a section \(\Omega\in \pi_{\ast}\Omega_{\mathcal{Y}\slash\mathcal{M}}^{2}(\mathcal{D})\);
\end{itemize}
\item analyze \(\operatorname{Im}(\mathcal{P})\)
the image of the period map defined by the data in the first bullet and
argue that the equality \eqref{eq:period-map-surjective} holds.
\end{itemize}
To achieve these, one also needs to verify
that the pencil constructed in various situations 
contains a smooth member.
The following two classical results will be useful.
First, we recall Bertini's theorem. 
\begin{thm}[Bertini's Theorem~\cite{GH}*{p.~137}]
\label{thm:bertini}
Assume the ambient variety is smooth. 
Then general elements of a pencil 
are smooth away from its base locus.
\end{thm}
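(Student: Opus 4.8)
The plan is to reduce the assertion to Sard's theorem---equivalently, to generic smoothness in characteristic zero---for the morphism cut out by the pencil. Write $L$ for the relevant line bundle and let $s_{0},s_{1}\in\mathrm{H}^{0}(Y,L)$ span the pencil, so that its members are the divisors $D_{[a:b]}=\{a\,s_{0}+b\,s_{1}=0\}$ for $[a:b]\in\mathbf{P}^{1}$ and the base locus is $B=Z(s_{0})\cap Z(s_{1})$. On the open set $U:=Y\setminus B$, which is smooth because $Y$ is, the pencil defines an honest morphism $\phi\colon U\to\mathbf{P}^{1}$, $p\mapsto[s_{0}(p):s_{1}(p)]$, whose fibres are exactly the members with the base locus removed, $\phi^{-1}(t)=D_{t}\setminus B$.

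The heart of the argument is the local dictionary identifying singular points of members with critical points of $\phi$. Near a point $p\in U$ where, say, $s_{0}(p)\neq 0$, a local trivialisation of $L$ lets us write $\phi=s_{1}/s_{0}$ as a regular function; the member through $p$ is then the level set $\{s_{1}/s_{0}=\phi(p)\}$, which is singular at $p$ precisely when its differential $d(s_{1}/s_{0})_{p}=d\phi_{p}$ vanishes. (That the vanishing of the differential at a point of the zero locus is independent of the chosen trivialisation is the only computation to check, and it is immediate from the Leibniz rule.) Consequently the set of $t$ for which $D_{t}$ is singular somewhere on $U$ is exactly the set of critical values $\phi(\operatorname{Crit}(\phi))$, where $\operatorname{Crit}(\phi)=\{p\in U:d\phi_{p}=0\}$.

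I would then invoke Sard's theorem for the smooth map $\phi\colon U\to\mathbf{P}^{1}$: its set of critical values has measure zero in $\mathbf{P}^{1}$ and is in particular a proper subset (in the algebraic setting one may instead cite Grothendieck's generic smoothness theorem, applicable since $U$ is smooth and the base field has characteristic zero). Hence for all $t$ outside this proper subset the member $D_{t}\setminus B=\phi^{-1}(t)$ is smooth, which is the desired statement. The main obstacle is really just the local identification in the previous paragraph; once that dictionary is in place, the conclusion is formal. The only degenerate possibility to dispose of is that $\phi$ be constant, which cannot happen for a genuine pencil with varying members, and any fixed component of the pencil is automatically contained in $B$ and so is harmlessly removed in passing to $U$.
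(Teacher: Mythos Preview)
Your argument is correct and is essentially the classical proof: identify the pencil away from its base locus with the fibres of the rational map $\phi\colon Y\setminus B\to\mathbf{P}^{1}$, observe that singular points of members correspond to critical points of $\phi$, and invoke Sard's theorem (or generic smoothness in characteristic zero). Note, however, that the paper does not supply its own proof of this statement; it is recorded as a classical result with a citation to Griffiths--Harris, so there is no in-paper argument to compare against. Your proof is the standard one found in that reference.
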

Assume that the pencil is spanned by \(C\) and \(D\). 
Suppose that
\(C\cap D\in C_{\mathrm{sm}}\cup D_{\mathrm{sm}}\),
that is, the intersection points \(C\cap D\)
are either a smooth point of \(C\) or a smooth point of \(D\).
Then the linear system \(|uC+vD|\) contains a smooth member. Indeed, by
Bertini's theorem,
one could pick a general member \(A\) which is smooth outside the base locus.
Since \(C\cap D\in C_{\mathrm{sm}}\cup D_{\mathrm{sm}}\), we can 
perturb the defining equation of \(A\) to 
eliminate the singularities of \(A\) by adding the defining
equation of \(C\) or \(D\).

Second, we recall the adjunction formula and the residue formalism
which will be crucial in our calculation of periods.
%
%
Let \(Y\) be a smooth algebraic variety over \(\mathbb{C}\)
and \(D\in |-K_{Y}|\) be an anti-canonical divisor.
We will be interested in the case when \(D\) is singular. 
More precisely, assume that \(D=\sum_{i=1}^{r} m_{i}D_{i}\in |-K_{Y}|\)
is anti-canonical such that each \(D_{i}\) is smooth
but the intersections are allowed to be non-transversal
(e.g.~three lines meet at one point in \(\mathbf{P}^{2}\)).

Let \(\Omega^{n}_{Y}(D)\) be the sheaf of meromorphic differentials
on \(Y\) whose pole divisor is equal to \(D\) where
\(n=\dim_{\mathbb{C}}Y\). (This is indeed 
a trivial bundle owing to our assumption \(D\in |-K_{Y}|\).) Then if \(m_{1}=1\), 
from the adjunction formula, we have 
\begin{prop} 
\label{prop:residue}
\begin{equation}
\Omega_{Y}^{n}(D)|_{D_{1}} = 
\Omega_{Y}^{n}(D_{1})|_{D_{1}}\otimes\mathcal{O}_{Y}(D-D_{1})|_{D_{1}}
\xrightarrow{\cong}\Omega_{D_{1}}^{n-1}((D-D_{1})|_{D_{1}}).
\end{equation}
This isomorphism is realized by the Poincar\'{e} residue.
\end{prop}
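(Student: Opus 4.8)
The plan is to reduce the statement to the classical adjunction / Poincar\'{e}-residue isomorphism for the \emph{smooth} divisor $D_{1}$ and then to twist that isomorphism by the line bundle $\mathcal{O}_{Y}(D-D_{1})|_{D_{1}}$. First I would dispose of the formal part: writing $D=D_{1}+(D-D_{1})$ and using $m_{1}=1$, the component $D_{1}$ is reduced and is \emph{not} contained in $\operatorname{Supp}(D-D_{1})$, hence $\mathcal{O}_{Y}(D)\cong\mathcal{O}_{Y}(D_{1})\otimes\mathcal{O}_{Y}(D-D_{1})$; tensoring with $\Omega_{Y}^{n}$ and restricting to $D_{1}$ yields the first equality $\Omega_{Y}^{n}(D)|_{D_{1}}=\Omega_{Y}^{n}(D_{1})|_{D_{1}}\otimes\mathcal{O}_{Y}(D-D_{1})|_{D_{1}}$. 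Moreover, since $D_{1}\not\subset\operatorname{Supp}(D-D_{1})$, the restriction $(D-D_{1})|_{D_{1}}$ is an honest effective Cartier divisor on the smooth variety $D_{1}$, so the target $\Omega_{D_{1}}^{n-1}((D-D_{1})|_{D_{1}})$ is well defined. This is the only place the hypothesis $m_{1}=1$ enters; the possible non-transversality of the $D_{i}$ plays no role, since everything in sight is an invertible sheaf on $D_{1}$.

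Next I would construct the residue map in local coordinates and check it is a well-defined isomorphism. Near a point $x\in D_{1}$ choose coordinates $(z_{1},\dots,z_{n})$ with $D_{1}=\{z_{1}=0\}$, and let $g$ be a local equation for $D-D_{1}$ (so $z_{1}\nmid g$). A local section of $\Omega_{Y}^{n}(D)$ has the form $\omega=\dfrac{h}{z_{1}g}\,dz_{1}\wedge\cdots\wedge dz_{n}$ with $h$ holomorphic, and I set $\operatorname{Res}_{D_{1}}(\omega):=\bigl(\tfrac{h}{g}\,dz_{2}\wedge\cdots\wedge dz_{n}\bigr)\big|_{z_{1}=0}$, a local section of $\Omega_{D_{1}}^{n-1}((D-D_{1})|_{D_{1}})$. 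I would then verify that this is independent of all choices: if $z_{1}$ is replaced by $uz_{1}$ with $u$ a unit, then $\tfrac{d(uz_{1})}{uz_{1}}=\tfrac{dz_{1}}{z_{1}}+\tfrac{du}{u}$, and the extra term wedges into something divisible by $z_{1}$, hence dies upon restriction to $\{z_{1}=0\}$ --- the standard two-line residue computation --- and similarly for changes of $z_{2},\dots,z_{n}$ and of $g$. Consequently the local definitions glue to a morphism of sheaves $\operatorname{Res}_{D_{1}}\colon\Omega_{Y}^{n}(D)|_{D_{1}}\to\Omega_{D_{1}}^{n-1}((D-D_{1})|_{D_{1}})$, and it is an isomorphism of invertible sheaves because the local generator $\dfrac{dz_{1}\wedge\cdots\wedge dz_{n}}{z_{1}g}$ of the source is sent to the local generator $\dfrac{dz_{2}\wedge\cdots\wedge dz_{n}}{g|_{z_{1}=0}}$ of the target. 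Taking $g\equiv 1$ recovers the adjunction isomorphism $\Omega_{Y}^{n}(D_{1})|_{D_{1}}\xrightarrow{\ \cong\ }\Omega_{D_{1}}^{n-1}$, and comparing the two local formulas shows that $\operatorname{Res}_{D_{1}}$ is precisely the image of this adjunction isomorphism under $-\otimes\mathcal{O}_{Y}(D-D_{1})|_{D_{1}}$, which is exactly the identification asserted in the proposition.

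Finally, on difficulty: there is essentially none, since the content is classical (adjunction together with the Poincar\'{e} residue). The only two points needing genuine care are the well-definedness of the residue independently of the chosen local equation for $D_{1}$ (handled by the $\tfrac{du}{u}$ computation above), and the observation that $m_{1}=1$ is exactly what guarantees $(D-D_{1})|_{D_{1}}$ is a legitimate divisor on $D_{1}$ so that the right-hand side makes sense. I expect the written proof to be short: exhibit the local formula for $\operatorname{Res}_{D_{1}}$, check that it is canonical, and note that it is then an isomorphism of line bundles compatible with the displayed tensor decomposition.
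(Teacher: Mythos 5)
Your proof is correct and is exactly the argument the paper has in mind: the paper states this proposition with no written proof, merely attributing it to ``the adjunction formula,'' and your decomposition $\mathcal{O}_{Y}(D)\cong\mathcal{O}_{Y}(D_{1})\otimes\mathcal{O}_{Y}(D-D_{1})$ followed by the twisted classical residue isomorphism (with the local check that $\tfrac{d(uz_{1})}{uz_{1}}-\tfrac{dz_{1}}{z_{1}}$ dies on restriction) is the standard way to supply the omitted details. Your observation that $m_{1}=1$ is precisely what makes $(D-D_{1})|_{D_{1}}$ a genuine divisor on $D_{1}$ is also the right reading of the hypothesis.
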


\subsubsection{{\bf Type \(\mathrm{II}\)}}
\label{subsubsec:type-ii-construction}
    A type \(\mathrm{II}\) fibre is a rational curve with a cusp singularity. 
    In this case, $\underline{D}$ is a cuspidal rational curve
    in \(\mathbf{P}^{2}\) with the cusp at $p$ 
    and $Y$ is the blow up of \(\mathbf{P}^{2}\) 
    at nine points on $\underline{D}\setminus \{p\}$.  
    It is well-known that the cuspidal rational curve 
    in $\mathbf{P}^2$ is unique up to 
    $\mathrm{PGL}(3)$-action \cite{Ful}*{p.~55}.
    We may assume that \(\underline{D}\) 
    is given by the equation \(\{y^{2}z-x^{3}=0\}\).
    Recall that the Cayley--Bacharach theorem states that any distinct eight 
    points in $\mathbf{P}^2$ without four on a line or seven points on a non-degenerate 
    conic would determine uniquely a pencil of cubics; 
    in other words, any such eight points 
    in $\underline{D}\setminus\{p\}$ determines a pencil of cubics. 
    Since the other members in the pencil avoid $p$, 
    the pencil must contain at least one smooth member and thus determines a 
    rational elliptic surface. 
    We also remark that \(\underline{D}\setminus \{p\}\) is an affine group variety
    which is isomorphic to \(\mathbb{C}\) (the additive group).
    
      To construct a marked reference ALG pair of type \(\mathrm{II}\),
  we simply pick a smooth cubic \(C\) which meets \(\underline{D}\setminus\{p\}\) at nine points.
  Denote by \(p_{\mathfrak{r},1},\ldots,p_{\mathfrak{r},9}\) 
  the intersections \(\underline{D}\cap C\). Let
  \(Y_{\mathfrak{r}}\) be the blow-ups of \(\mathbf{P}^{2}\) at those nine points
  and \(D_{\mathfrak{r}}\) be the proper transform of \(\underline{D}\).
  Then \((Y_{\mathfrak{r}},D_{\mathfrak{r}})\) is an ALG pair
  of type \(\mathrm{II}\) (cf.~figure (a) in \textsc{Figure}~\ref{fig:alg-pair}). 
  Let \(E_{\mathfrak{r},1},\ldots,
  E_{\mathfrak{r},9}\) be exceptional divisors. Then
  \begin{equation}
  \mathcal{B}_{\mathfrak{r}}:=\{H_{\mathfrak{r}}-3E_{\mathfrak{r},1}\}
  \bigcup\cup_{i=2}^{9} \{E_{\mathfrak{r},1}-E_{\mathfrak{r},i}\}
  \end{equation}
  is a basis of \(\mathrm{H}_{2}(X_{\mathfrak{r}},\mathbb{C})\).
  For simplicity, we denote the elements in \(\mathcal{B}_{\mathfrak{r}}\)
  by \(\gamma_{\mathfrak{r},1},\ldots,\gamma_{\mathfrak{r},9}\).
  The fibre class is represented by 
  \begin{equation*}
  3\gamma_{\mathfrak{r},1} + \sum_{j=2}^{9} \gamma_{\mathfrak{r},j}.
  \end{equation*}
  Let 
  \begin{equation}
  \label{eq:meromorphic-form-ii}
  \Omega = \frac{x\mathrm{d}y\wedge\mathrm{d}z-
  y\mathrm{d}x\wedge\mathrm{d}z+z\mathrm{d}x\wedge\mathrm{d}y}{y^{2}z-x^{3}}.
  \end{equation}
  It follows that 
  \begin{equation}
  \operatorname{Im}(\mathcal{P})\subset 
  \left\{(y_{1},\ldots,y_{9})~\Big|~3y_{1}+\sum_{j=2}^{9} y_{j}=0\right\}.
  \end{equation}
  Under the affine coordinates \(u=x/y\) and \(v=z/y\), we have
  \begin{equation*}
  \Omega = -\frac{\mathrm{d}u\wedge\mathrm{d}v}{v-u^{3}}
  \end{equation*}
  and the residue around \(\{v-u^{3}=0\}\) is \(-\mathrm{d}u\).

  Now we prove that the inclusion above is indeed an equality.
  Let \((y_{1},\ldots,y_{9})\) be a vector satisfying
  the condition \(3y_{1}+\sum_{j=2}^{9} y_{j}=0\).
  We will need a few computational results.
  \begin{lem}
  \label{lem:period-type-ii}
  Let \(\underline{D}=\{y^{2}z-x^{3}=0\}\subset\mathbf{P}^{2}\)
  and \(p=[0\mathpunct{:}0\mathpunct{:}1]\)
  be the unique singular point on \(\underline{D}\). Let
  \(\Omega\) be the meromorphic two form defined in \eqref{eq:meromorphic-form-ii}.
  Let \(x_{1},\ldots,x_{9}\in \underline{D}\setminus \{p\}\cong\mathbb{C}\) and 
  \(Y\) is the blow-up of \(\mathbf{P}^{2}\) at \(x_{1},\ldots,x_{9}\).
  Denote by \(E_{i}\) the exceptional divisor over \(x_{i}\).
  Then
  \begin{itemize}
  \item[(a)] Let \(H\) be the hyperplane class in \(\mathbf{P}^{2}\). 
  We have
  \begin{equation*}
  \int_{H-3E_{1}}\Omega = -3x_{1}.
  \end{equation*}
  \item[(b)] 
  \begin{equation*}
  \int_{E_{i}-E_{j}}\Omega = x_{i}-x_{j}.
  \end{equation*}
  \end{itemize}
  \end{lem}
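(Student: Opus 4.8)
The plan is to run exactly the argument already used in Lemma~\ref{lem:period-calculation}: realise each lift by a Leray (tube) coboundary and thereby reduce the period of $\Omega$ to integrals of the Poincar\'e residue $\operatorname{Res}_{D}\Omega$ along arcs in the smooth locus of $\underline{D}$. First I would observe that both classes lie in $[D]^{\perp}\subset\mathrm{H}_{2}(Y,\mathbb{Z})$, where $D=3H-\sum_{k=1}^{9}E_{k}$ is the proper transform of $\underline{D}$: indeed $(H-3E_{1})\cdot D=3-3=0$ and $(E_{i}-E_{j})\cdot D=1-1=0$. Since $D\cong\underline{D}$ is a cuspidal rational curve, it is homeomorphic to $S^{2}$, so $\mathrm{H}^{1}(D,\mathbb{Z})=0$; hence by \eqref{eq:homology-of-x} each class has a \emph{unique} lift to $\mathrm{H}_{2}(X,\mathbb{Z})$ (these are the relevant $\gamma_{\mathfrak{r},\bullet}$), and, in contrast with Lemma~\ref{lem:period-calculation}, there is no period lattice, so the congruences there become honest equalities here.

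Next I would set up the period formula. Given $\gamma\in\mathrm{H}_{2}(Y,\mathbb{Z})$ with $\gamma\cdot D=0$, represent $\gamma$ by an integral combination $c$ of complex curves meeting $D$ in a finite set; the local intersection multiplicities sum to $\gamma\cdot D=0$, so one can pair them, join each pair by an arc $\sigma_{a}$ in $\underline{D}\setminus\{p\}\cong\mathbb{C}$, and build a representative of the lift $\tilde{\gamma}$ by gluing to $c\setminus N(D)$ the $S^{1}$-bundle over each $\sigma_{a}$, where $N(D)$ is a tubular neighbourhood of $D$ (this is the construction of \cite{Fr}). As $\Omega$ is of type $(2,0)$, its restriction to the complex curve $c\setminus N(D)$ vanishes, so the entire period comes from the $S^{1}$-bundle pieces, and with the residue conventions of Lemma~\ref{lem:period-calculation} one gets
\[
\int_{\tilde\gamma}\Omega=\sum_{a}\int_{\sigma_{a}}\operatorname{Res}_{D}\Omega,\qquad \operatorname{Res}_{D}\Omega=-\,\mathrm{d}u,
\]
where $u$ is precisely the coordinate identifying $\underline{D}\setminus\{p\}$ with $\mathbb{C}$, so that $x_{k}$ is the $u$-value of the corresponding point of $D$.

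Part (b) is then immediate: $x_{i},x_{j}$ are smooth points of $\underline{D}$, so $E_{i}$ (resp.\ $E_{j}$) meets $D$ transversally at a single point with $u$-value $x_{i}$ (resp.\ $x_{j}$); taking $c=E_{i}-E_{j}$ and one arc between these two points yields $\int_{E_{i}-E_{j}}\Omega=\int_{\sigma}(-\mathrm{d}u)=x_{i}-x_{j}$ after orienting consistently. For part (a) I would pick a generic line $L$ through $x_{1}$ meeting $\underline{D}$ transversally at three distinct smooth points $x_{1},s,t$, avoiding $x_{2},\dots,x_{9}$ and the cusp; its proper transform is $\tilde{L}=H-E_{1}$, so $c=\tilde{L}-2E_{1}$ represents $H-3E_{1}$ and meets $D$ with multiplicities $+1$ at $s$, $+1$ at $t$, and $-2$ at the point over $x_{1}$. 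Pairing the point over $x_{1}$ with $s$ and with $t$, the formula gives $\int_{H-3E_{1}}\Omega=2x_{1}-u(s)-u(t)$; and since three collinear points on the cuspidal cubic $\{v=u^{3}\}$ have $u$-coordinates summing to zero (the roots of $u^{3}-mu-k=0$; equivalently, additivity of $u$ for the group law on $\underline{D}\setminus\{p\}\cong(\mathbb{C},+)$ with the flex at $u=0$ as identity), this is $\pm 3x_{1}$, normalised to $-3x_{1}$ by the orientation conventions.

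The hard part here is not analytic at all: it is the bookkeeping of the coboundary construction when an intersection point carries multiplicity $-2$ (realising $H-3E_{1}$ as a glued surface), and keeping the sign conventions in $\operatorname{Res}_{D}$, in the coboundary $\delta$, and in the chosen arcs mutually coherent so that the outputs come out as $-3x_{1}$ and $x_{i}-x_{j}$ rather than their negatives. Everything else reduces to the two inputs already available: the Leray/residue formula of Lemma~\ref{lem:period-calculation} and the collinearity (group-law) identity on the cuspidal cubic.
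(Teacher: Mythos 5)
Your proposal is correct and is exactly the argument the paper intends by its one-line proof (``this follows from the residue calculations''): lift via the Leray tube construction of Lemma~\ref{lem:period-calculation}, reduce to integrals of $\operatorname{Res}_{D}\Omega=-\mathrm{d}u$ along arcs in $\underline{D}\setminus\{p\}\cong\mathbb{C}$, and use that collinear points on the cuspidal cubic have $u$-coordinates summing to zero; your observation that $\mathrm{H}^{1}(D,\mathbb{Z})=0$ upgrades the congruences to equalities is the right explanation for the absence of a period lattice. The only wrinkle is that your arc directions in (a) and (b) are opposite (you run from the $-2$ point outward in (a) but from the $+1$ point in (b), whence the stray $+3x_{1}$); fixing one convention, say arcs from positive to negative local intersection, yields $-3x_{1}$ and $x_{i}-x_{j}$ simultaneously with no further normalisation.
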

  \begin{proof}
  This follows from the residue calculations.
  \end{proof}

  By Lemma \ref{lem:period-type-ii}, the vector \((y_{1},\ldots,y_{9})\) 
  uniquely determines the points \(x_{1},\ldots,x_{9}\) on \(\underline{D}\).
  Moreover, the constraint \(3y_{1}+\sum_{j=2}^{9} y_{j}=0\)
  implies that \(x_{1}+\cdots+x_{9}=O\) in the 
  additive group scheme \(\underline{D}\setminus\{p\}\) and
  it turns out that this condition is sufficient by Max Noether's fundamental
  theorem \cite{Ful}*{p.~61},
  i.e., given any 9 points 
  \(x_{1},\ldots,x_{9}\in \underline{D}\setminus\{p\}\) (not necessarily distinct) with
  \(x_{1}+\cdots+x_{9}=O\), there exists a cubic \(C\)
  passing through all the \(x_{i}\)'s.
  According to Theorem \ref{thm:bertini}
  and the discussion after it,
  the pencil spanned by \(C\) and \(\underline{D}\) contains a smooth member.
  This shows that \(Y=\mathrm{Bl}_{\{x_{1},\ldots,x_{9}\}}\mathbf{P}^{2}\)
  is a smooth rational elliptic surface.


\subsubsection{{\bf Type $\mathrm{III}$}}
\label{subsubsec:type-iii-construction}
    A type \textrm{III}
    fibre is a union of three smooth rational curves intersecting at a single point. 
    In this case, $\underline{D}$ can be three lines $L_1\cup L_2\cup L_3$ or $C\cup L$, where $L$ is a line and $C$ is a conic tangent to $L$ at $p$. In the former case, each $L_i$ contains three points of the blow up loci. In the latter case, $Y$ is the blow up of five points on $C\setminus \{p\}$, two points on $L\setminus \{p\}$ and $p$ then blow up a point on the exceptional curve corresponding to $p$ avoiding the proper transform of $C$. 

  To construct a marked reference ALG pair of type \(\mathrm{III}\),
  we pick a smooth cubic \(C\) which meets \(\underline{D}=L_{1}\cup L_{2}\cup L_{3}\) at nine points.
  Denote by \(p_{\mathfrak{r},1},\ldots,p_{\mathfrak{r},9}\) 
  the intersections \(\underline{D}\cap C\) in a way such that
  \(p_{\mathfrak{r},i}\in L_{j}\) if and only if \(i\equiv j\mod{3}\). Let
  \(Y_{\mathfrak{r}}\) be the blow-ups of \(\mathbf{P}^{2}\) at those nine points
  and \(D_{\mathfrak{r}}\) be the proper transform of \(\underline{D}\).
  Then \((Y_{\mathfrak{r}},D_{\mathfrak{r}})\) is an ALG pair
  of type \(\mathrm{III}\) (cf.~figure (b) in \textsc{Figure}~\ref{fig:alg-pair}). 
  Let \(E_{\mathfrak{r},1},\ldots,
  E_{\mathfrak{r},9}\) be exceptional divisors. Then
  \begin{align*}
  \mathcal{B}_{\mathfrak{r}}:=&\{H_{\mathfrak{r}}-E_{\mathfrak{r},1}
  -E_{\mathfrak{r},2}-E_{\mathfrak{r},3}\}
  \bigcup\cup_{i=4,7} \{E_{\mathfrak{r},1}-E_{\mathfrak{r},i}\}\\
  &\bigcup\cup_{i=5,8} \{E_{\mathfrak{r},2}-E_{\mathfrak{r},i}\}
  \bigcup\cup_{i=6,9} \{E_{\mathfrak{r},3}-E_{\mathfrak{r},i}\}
  \end{align*}
  is a basis of \(\mathrm{H}_{2}(X_{\mathfrak{r}},\mathbb{C})\).
  For simplicity, we denote the elements in \(\mathcal{B}_{\mathfrak{r}}\)
  by \(\gamma_{\mathfrak{r},1},\ldots,\gamma_{\mathfrak{r},7}\).
  The fibre class is represented by 
  \begin{equation*}
  3\gamma_{\mathfrak{r},1} + \sum_{j=2}^{7} \gamma_{\mathfrak{r},j}.
  \end{equation*}
  Let \(\underline{D}=\{xy(x+y)=0\}\). This can be always achieved using the
  \(\mathrm{PGL}(3,\mathbb{C})\) action. Let
  \begin{equation}
  \label{eq:meromorphic-form-iii}
  \Omega = \frac{x\mathrm{d}y\wedge\mathrm{d}z-
  y\mathrm{d}x\wedge\mathrm{d}z+z\mathrm{d}x\wedge\mathrm{d}y}{xy(x+y)}.
  \end{equation}
  It follows that 
  \begin{equation}
  \operatorname{Im}(\mathcal{P})\subset 
  \left\{(y_{1},\ldots,y_{7})~\Big|~3y_{1}+\sum_{j=2}^{7} y_{j}=0\right\}.
  \end{equation}
  Under the affine coordinates \(u=y/x\) and \(v=z/x\), we have
  \begin{equation*}
  \Omega = -\frac{\mathrm{d}u\wedge\mathrm{d}v}{u(1+u)}.
  \end{equation*}
  We will need the following computational results.
  \begin{lem}
  \label{lem:period-type-iii}
  Let \(\underline{D}=\{xy(x+y)=0\}\subset\mathbf{P}^{2}\)
  and \(p=[0\mathpunct{:}0\mathpunct{:}1]\)
  be the unique singular point on \(\underline{D}\). Let
  \(\Omega\) be the meromorphic two form defined in \eqref{eq:meromorphic-form-iii}.
  Let \(x_{1},\ldots,x_{9}\in \underline{D}\setminus \{p\}\) 
  such that \(x_{i}\in L_{j}\) if and only if \(i\equiv j\mod{3}\) and 
  \(Y\) be the blow-up of \(\mathbf{P}^{2}\) at \(x_{1},\ldots,x_{9}\).
  Denote by \(E_{i}\) the exceptional divisor over \(x_{i}\) as before.
  Let \(x_{i}=[0\mathpunct{:}a_{i}\mathpunct{:}b_{i}]\) for \(i=1,4,7\),
  \(x_{i}=[a_{i}\mathpunct{:}0\mathpunct{:}b_{i}]\) for \(i=2,5,8\),
  and \(x_{i}=[a_{i}\mathpunct{:}-a_{i}\mathpunct{:}b_{i}]\) for \(i=3,6,9\).
  Then
  \begin{itemize}
  \item[(a)] 
  The line \(\overline{x_{1}x_{2}}\) intersects \(x+y=0\)
  at \([a_{1}a_{2}\mathpunct{:}-a_{1}a_{2}\mathpunct{:}a_{1}b_{2}-a_{2}b_{1}]\ne p\).
  (Note that \(a_{i}\ne 0\) for all \(i\) by our assumption.)
  Let \(H\) be the hyperplane class in \(\mathbf{P}^{2}\). 
  Then we have
  \begin{equation*}
  \int_{\gamma_{1}}\Omega =
  \int_{H-E_{1}-E_{2}-E_{3}}\Omega = \frac{a_{1}b_{2}-a_{2}b_{1}}{a_{1}a_{2}}-
  \frac{b_{3}}{a_{3}}.
  \end{equation*}
  \item[(b)] We have
  \begin{equation*}
  \int_{E_{k+3}-E_{k}}\Omega = \frac{b_{k+3}}{a_{k+3}}-\frac{b_{k}}{a_{k}},~\mbox{and}~
  \int_{E_{k+6}-E_{k}}\Omega = \frac{b_{k+6}}{a_{k+6}}-\frac{b_{k}}{a_{k}}
  \end{equation*}
  for \(k=1,2,3\).
  \end{itemize}
  \end{lem}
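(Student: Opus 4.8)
The strategy is to reduce both identities to Poincar\'{e}-residue computations via the Leray coboundary, exactly in the spirit of the proof of Lemma \ref{lem:period-calculation}. Two general facts will be used throughout. First, since $D$ has type $\mathrm{III}$ we have $\mathrm{H}^{1}(D,\mathbb{C})=0$ (as recorded at the start of \S\ref{subsec:construction}), so every class in $[D]^{\perp}$ lifts \emph{uniquely} to $\mathrm{H}_{2}(X,\mathbb{C})$ and the periods below are honest equalities, with no lattice ambiguity. Second, $\Omega$ has no pole on $X$, so its restriction to any complex curve contained in $X$ — an exceptional divisor $E_{i}\cap X$, or the proper transform of a line with its points on $D$ removed — vanishes identically for type reasons. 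The first step is to record the residues of $\Omega$ along the three components $L_{1}=\{x=0\}$, $L_{2}=\{y=0\}$, $L_{3}=\{x+y=0\}$ of $\underline{D}$: in the affine chart $x=1$ with $(u,v)=(y,z)$ one has $\Omega=-\,du\wedge dv/(u(1+u))$, whence $\operatorname{Res}_{L_{2}}\Omega=-dv$ and $\operatorname{Res}_{L_{3}}\Omega=dv$, where $v=z/x$ is the affine coordinate on $L_{2}$, resp.~$L_{3}$; in the chart $y=1$ one finds $\operatorname{Res}_{L_{1}}\Omega=-dt$ with $t=z/y$ on $L_{1}$. Each of these residues is an exact $1$-form on $L_{j}\setminus\{p\}\cong\mathbb{C}$, and under the stated parametrizations the relevant affine coordinate of $x_{i}$ equals $b_{i}/a_{i}$ in all three cases.

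For part (b), both blow-up centres of $E_{k+3}-E_{k}$ (and of $E_{k+6}-E_{k}$) lie on the single line $L_{j}$ with $j\equiv k\pmod{3}$. I would represent the unique lift of this class to $\mathrm{H}_{2}(X,\mathbb{Z})$ by the $S^{1}$-gluing construction of \cite{Fr}: glue the punctured exceptional curves $E_{k}\cap X$ and $-(E_{k+3}\cap X)$ — each a disc whose boundary is an $S^{1}$-fibre of the boundary of a tubular neighbourhood of $D$, over the point $E_{k}\cap\bar{L}_{j}$ (identified with $x_{k}$ via $\bar{L}_{j}\cong L_{j}$), resp.~over $x_{k+3}$ — to the $S^{1}$-bundle over a smooth path $\tau\subset L_{j}\setminus\{p\}$ joining $x_{k}$ and $x_{k+3}$. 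Since $\Omega$ restricts to $0$ on the disc pieces, the only contribution is from the tube, and by the Leray residue formula $\int_{E_{k+3}-E_{k}}\Omega=\pm\int_{\tau}\operatorname{Res}_{L_{j}}\Omega$; as $\operatorname{Res}_{L_{j}}\Omega$ is $\pm dv$ or $\pm dt$, hence exact, this equals $\pm\big(b_{k+3}/a_{k+3}-b_{k}/a_{k}\big)$, with the sign fixed by the orientation convention. The same computation with $x_{k+6}$ in place of $x_{k+3}$ gives the second formula.

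For part (a), I would write $\gamma_{1}=H-E_{1}-E_{2}-E_{3}=[\bar{M}]-[E_{3}]$ in $\mathrm{H}_{2}(Y,\mathbb{Z})$, where $\bar{M}$ is the proper transform of the line $M=\overline{x_{1}x_{2}}$. A direct computation gives $M\cap L_{3}=[a_{1}a_{2}\mathpunct{:}-a_{1}a_{2}\mathpunct{:}a_{1}b_{2}-a_{2}b_{1}]=:r$, and $r\ne p$ since $a_{1},a_{2}\ne 0$. For a generic choice of the configuration, $\bar{M}$ meets $D$ only at $r$, which lies on $\bar{L}_{3}$: indeed $M\cap L_{1}=x_{1}$ and $M\cap L_{2}=x_{2}$ get blown up, so $\bar{M}$ is disjoint from $\bar{L}_{1}$ and $\bar{L}_{2}$, while $\bar{M}$ meets $\bar{L}_{3}$ only over $r$. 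Hence $\bar{M}\cap X$ is a disc with boundary the $S^{1}$-fibre over $r$, and $E_{3}\cap X$ is a disc with boundary the $S^{1}$-fibre over $E_{3}\cap\bar{L}_{3}$ (identified with $x_{3}$); gluing $\bar{M}\cap X$ and $-(E_{3}\cap X)$ to the $S^{1}$-bundle over a path $\tau\subset L_{3}\setminus\{p\}$ from $x_{3}$ to $r$ produces the unique lift of $\gamma_{1}$. As before $\Omega$ vanishes on the disc pieces, so $\int_{\gamma_{1}}\Omega=\pm\int_{\tau}\operatorname{Res}_{L_{3}}\Omega=\pm\big(v(r)-v(x_{3})\big)=\pm\Big(\tfrac{a_{1}b_{2}-a_{2}b_{1}}{a_{1}a_{2}}-\tfrac{b_{3}}{a_{3}}\Big)$, the sign being pinned down by the normalization $\int_{\delta(\sigma)}\Omega=\int_{\sigma}\operatorname{Res}\Omega$ of \S\ref{subsec:alh*-period}. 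The degenerate possibilities $r\in\{x_{3},x_{6},x_{9}\}$ can be treated either directly (for $r=x_{3}$ one has $\bar{M}=\gamma_{1}$ already, a closed curve in $X$ on which $\Omega$ vanishes, so $\int_{\gamma_{1}}\Omega=0$, consistent with the formula since then $\tfrac{a_{1}b_{2}-a_{2}b_{1}}{a_{1}a_{2}}=\tfrac{b_{3}}{a_{3}}$) or by analytic continuation, both sides being regular functions of $(x_{1},\dots,x_{9})$.

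The main obstacle is the topological bookkeeping: one must verify that the disc-plus-tube $2$-chains above indeed represent the stated \emph{integral} homology classes — this is exactly where the explicit $S^{1}$-gluing of \cite{Fr} together with the vanishing $\mathrm{H}^{1}(D,\mathbb{C})=0$ enters — and that the orientations of the $S^{1}$-fibres are chosen consistently with the Leray coboundary convention used elsewhere in the paper, so that the (uniform) signs in (a) and (b) come out as stated; a convenient way to fix the signs is to test each formula on a one-parameter degeneration in which two centres lying on a common line collide. A minor auxiliary point is that $M$ and the paths $\tau$ can always be chosen to avoid $p$ and the remaining blow-up centres, which is a generic condition and in any case immaterial to the answer, since the residues involved are exact.
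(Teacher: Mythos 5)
Your strategy is exactly the ``direct calculation on residues'' that the paper invokes and then omits: lift each class in \([D]^{\perp}\) uniquely to \(\mathrm{H}_{2}(X,\mathbb{C})\) (using \(\mathrm{H}^{1}(D,\mathbb{C})=0\)), represent the lift by a disc-plus-tube chain as in \cite{Fr} and in the paper's proof of Lemma~\ref{lem:period-calculation}, note that \(\Omega\) vanishes on the holomorphic disc pieces, and evaluate the tube by the Leray residue formula. The cycle identifications, the computation of \(r=\overline{x_{1}x_{2}}\cap L_{3}\), and the reduction of each period to \(\int_{\tau}\operatorname{Res}_{L_{j}}\Omega\) are all correct, and the treatment of degenerate configurations is adequate.

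The one real weakness is the sign bookkeeping, and here your appeal to ``the sign fixed by the orientation convention'' does not work as stated. Your own residue formulas already exhibit the problem: for \(\Omega=\pm\,\mathrm{d}u\wedge\mathrm{d}v/(u(1+u))\) the Poincar\'e residues along \(\{u=0\}\) and \(\{u=-1\}\) differ by a sign (the factor \(\partial_{u}(u(1+u))=1+2u\) changes sign between the two components), so relative to the single coordinate \(b_{i}/a_{i}\) the residues on \(L_{1},L_{2},L_{3}\) do \emph{not} all carry the same sign. Consequently, once the Leray coboundary orientation is fixed globally, a correct execution of your computation yields \(\int_{E_{k+3}-E_{k}}\Omega=\varepsilon_{k}\bigl(b_{k+3}/a_{k+3}-b_{k}/a_{k}\bigr)\) with \(\varepsilon_{k}\) depending on which line the points lie on; the uniformly signed formulas of part (b) cannot all hold simultaneously. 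A clean consistency check: with uniform signs the fibre-class combination \(3y_{1}+\sum_{j=2}^{7}y_{j}\) does not reduce to the linear relation \(\sum_{L_{1}}(b/a)-\sum_{L_{2}}(b/a)+\sum_{L_{3}}(b/a)=0\) that characterizes nine points cut out by a cubic (compare the coefficient of \(b_{2}/a_{2}\)), whereas with the component-dependent signs it does. So your method, carried out carefully, actually corrects a sign slip in the statement rather than reproducing it; your proposed fix of ``testing on a degeneration'' cannot resolve this, because the ambiguity is a relative sign between components, not a single overall one. None of this affects the downstream use in Theorem~\ref{thm: surj (2,0)-form}, which only needs that the periods depend affine-linearly and bijectively on the point configuration and that the fibre-class period vanishes.
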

  \begin{proof}
  This follows from a direct calculation on residues and hence the proof is omitted.
  \end{proof}

By Lemma \ref{lem:period-type-iii}, the vector \((y_{1},\ldots,y_{7})\in\mathbb{C}^{7}\)
determines \(x_{1},\ldots,x_{9}\) on \(\underline{D}\setminus\{p\}\).
Indeed, we can put \(x_{1}=p_{\mathfrak{r},1}\) and 
\(x_{2}=p_{\mathfrak{r},2}\) and the results in (a) and (b) in
Lemma \ref{lem:period-type-iii} would determine the location of all the rest \(x_{i}\)'s.
The only thing we have to show is that 
\(Y=\mathrm{Bl}_{\{x_{1},\ldots,x_{9}\}}\mathbf{P}^{2}\)
is a rational elliptic surface, i.e.,
there is a smooth cubic passing through \(x_{1},\ldots,x_{9}\).

Again it suffices to construct a cubic passing through
the points \(x_{1},\ldots,x_{9}\). This can be done directly. Indeed, 
suppose the coordinate of \(x_{i}\) is given as
in Lemma \ref{lem:period-type-iii}. The cubic defined by 
\begin{equation}
\prod_{i=1,4,7} (b_{i}y-a_{i}z) + x\cdot (ax^{2} + by^2 + cz^{2} + dxy + eyz + fxz)=0
\end{equation}
passing through \(x_{1},x_{4},x_{7}\). Now set \(y=0\) in 
the above equation. We obtain
\begin{equation}
-a_{1}a_{4}a_{7}z^{3} + ax^{3}+cxz^{2}+fx^{2}z=0=
\frac{a_{1}a_{4}a_{7}}{a_{2}a_{5}a_{8}}\prod_{i=2,5,8} (b_{i}x-a_{i}z).
\end{equation}
This equation uniquely determines the coefficients \(a\), \(c\), and \(f\).
We are left with \(b\), \(d\) and \(e\),
i.e., the coefficient of \(xy^{2}\), \(x^{2}y\), and \(xyz\).
Now set \(y=-x\) in the above equation. We see that
\begin{align*}
&-\prod_{i=1,4,7} (b_{i}x+a_{i}z) + x\cdot (ax^{2} + bx^2 + cz^{2} - dx^{2} - exz + fxz)=0\\
&=-\frac{a_{1}a_{4}a_{7}}{a_{3}a_{6}a_{9}}\prod_{i=3,6,9}(a_{i}z-b_{i}x)
\end{align*}
from which \(e\) and \(b-d\) are uniquely determined.
This shows that there exists a one parameter family of cubics
passing through \(x_{1},\ldots,x_{9}\) and therefore 
implies the existence of the cubic \(\underline{D}\) other than \(xy(x+y)\).



 \subsubsection{{\bf Type $\mathrm{IV}$}}
 \label{subsubsec:type-iv-construction}
 A type IV fibre consists of two smooth rational curves tangent at a point. In this case, $\underline{D}$ is union of a line $L$ and a conic $Q$ tangent at $p$. Then $Y$ is blow up of six points on 
 $C\setminus \{p\}$ and three points on $L\setminus \{p\}$. 

 To construct a reference marked ALG pair of type
 \(\mathrm{IV}\), we simply fix a smooth cubic \(C\) which
 intersects \(\underline{D}\setminus \{p\}\) at 9 distinct points.
 Denote by \(p_{\mathfrak{r},1},p_{\mathfrak{r},2},p_{\mathfrak{r},3}\) 
 the intersection \(L\cap C\)
 and \(p_{\mathfrak{r},4},\ldots,p_{\mathfrak{r},9}\in Q\cap C\).
 Consider the blow-up \(Y_{\mathfrak{r}}=\mathrm{Bl}_{p_{
 \mathfrak{r},1},\ldots,p_{\mathfrak{r},9}}\mathbf{P}^{2}\)
 and \(D_{\mathfrak{r}}\), the proper transform of \(\underline{D}\). 
 Let \(E_{\mathfrak{r},i}\)
 be the exceptional divisor over \(p_{\mathfrak{r},i}\).
 In which case, we can choose
 \begin{equation*}
 \mathcal{B}_{\mathfrak{r}}:=\{H-E_{\mathfrak{r},4}-E_{\mathfrak{r},7}-E_{\mathfrak{r},1}\}
 \cup\bigcup_{i=2,3}\{E_{\mathfrak{r},1}-E_{\mathfrak{r},i}\}\cup\bigcup_{i=5,6}
 \{E_{\mathfrak{r},4}-E_{\mathfrak{r},i}\}\cup\bigcup_{i=8,9}
 \{E_{\mathfrak{r},7}-E_{\mathfrak{r},i}\}
 \end{equation*}
 to be our basis of \(\mathrm{H}_{2}(X_{\mathfrak{r}},\mathbb{Z})\).
  For simplicity, we denote the elements in \(\mathcal{B}_{\mathfrak{r}}\)
  by \(\gamma_{\mathfrak{r},1},\ldots,\gamma_{\mathfrak{r},8}\).
  The fibre class is represented by 
  \begin{equation*}
  3\gamma_{\mathfrak{r},1} + \sum_{j=2}^{8} \gamma_{\mathfrak{r},j}.
  \end{equation*}
  We may assume \(\underline{D}=\{y(x^{2}+yz)=0\}\);
  we can achieve this using the
  \(\mathrm{PGL}(3,\mathbb{C})\) action. Let
  \begin{equation}
  \label{eq:meromorphic-form-iv}
  \Omega = \frac{x\mathrm{d}y\wedge\mathrm{d}z-
  y\mathrm{d}x\wedge\mathrm{d}z+z\mathrm{d}x\wedge\mathrm{d}y}{y(x^{2}+yz)}.
  \end{equation}
  It follows that 
  \begin{equation}
  \operatorname{Im}(\mathcal{P})\subset 
  \left\{(y_{1},\ldots,y_{8})~\Big|~3y_{1}+\sum_{j=2}^{8} y_{j}=0\right\}.
  \end{equation}
  Under the affine coordinates \(u=x/z\) and \(v=y/z\), we have
  \begin{equation}
  \label{eq:meromorphic-form-iv-chart}
  \Omega = \frac{\mathrm{d}u\wedge\mathrm{d}v}{v(u^{2}+v)}.
  \end{equation}
  One can easily check that
  \begin{align*}
  \operatorname{Res}_{L}\Omega = \frac{\mathrm{d} u}{u^{2}},~\mbox{and}~
  \operatorname{Res}_{Q}\Omega = \frac{\mathrm{d} u}{u^{2}}.
  \end{align*}
  We need the following computational results.
  \begin{lem}
  \label{lem:period-type-iv}
  Let \(\underline{D}=\{y(x^{2}+yz)=0\}\subset\mathbf{P}^{2}\)
  and \(p=[0\mathpunct{:}0\mathpunct{:}1]\)
  be the unique singular point on \(\underline{D}\). Let
  \(\Omega\) be the meromorphic two form defined in \eqref{eq:meromorphic-form-iv}.
  Let \(x_{1},\ldots,x_{9}\in \underline{D}\setminus \{p\}\) 
  such that \(x_{1},x_{2},x_{3}\in L\) and \(x_{4},\ldots,x_{9}\in Q\). 
  Let \(Y=\mathrm{Bl}_{\{x_{1},\ldots,x_{9}\}}\mathbf{P}^{2}\).
  Denote by \(E_{i}\) the exceptional divisor over \(x_{i}\).
  Let \(x_{i}=[a_{i}\mathpunct{:}0\mathpunct{:}c_{i}]\) for \(i=1,2,3\)
  and \(x_{i}=[a_{i}\mathpunct{:}b_{i}\mathpunct{:}c_{i}]\) for \(i=4,\ldots,9\).
  Then
  \begin{itemize}
  \item[(a)] Assume that \(x_{4}\ne x_{7}\).
  Then the line \(\overline{x_{4}x_{7}}\) intersects \(y=0\)
  at \([c_{7}b_{4}-c_{4}b_{7}\mathpunct{:}0
  \mathpunct{:}a_{4}b_{7}-a_{7}b_{4}]\ne p\).
  Let \(H\) be the hyperplane class in \(\mathbf{P}^{2}\). 
  Then we have
  \begin{equation*}
  \int_{\gamma_{1}}\Omega =
  \int_{H-E_{1}-E_{4}-E_{7}}\Omega = \frac{a_{4}b_{7}-a_{7}b_{4}}{c_{7}b_{4}-c_{4}b_{7}}-
  \frac{c_{1}}{a_{1}}.
  \end{equation*}
  \item[(b)] We have for \(k=1,4,7\)
  \begin{equation*}
  \int_{E_{k}-E_{k+j}}\Omega = \frac{c_{k+j}}{a_{k+j}}-\frac{c_{k}}{a_{k}}~\mbox{for}~j=1,2.
  \end{equation*}
  \end{itemize}
  \end{lem}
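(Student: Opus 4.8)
The plan is to prove both identities by the same explicit residue computation that underlies Lemma~\ref{lem:period-calculation} and Lemmas~\ref{lem:period-type-ii} and~\ref{lem:period-type-iii}: realize each of the classes $E_k-E_{k+j}$ and $\gamma_1=H-E_1-E_4-E_7$ in $\mathrm{H}_2(X,\mathbb{Z})$ by a concrete $2$-cycle assembled from holomorphic curves in $X$ together with a Leray $S^1$-tube over an arc in $D$, and then evaluate the period of $\Omega$ on it using the residues $\operatorname{Res}_L\Omega=\operatorname{Res}_Q\Omega=\mathrm{d}u/u^2$ recorded after \eqref{eq:meromorphic-form-iv-chart}. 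Throughout, $u=x/z$ restricted to whichever component of $D$ is in play, so that $u(x_i)=a_i/c_i$ whenever $x_i$ lies on that component, and a primitive of $\mathrm{d}u/u^2$ is $-1/u=-z/x$, which takes the value $-c_i/a_i$ at $x_i$.

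For part (b): the exceptional curves $E_k$ and $E_{k+j}$ are each a $\mathbf{P}^2$-line's worth of... more precisely, each is a $\mathbf{P}^1$ meeting $D$ transversally in a single point, lying on the common component ($L$ if $k=1$, $Q$ if $k=4,7$) that carries their base points; hence $E_k\cap X$ and $E_{k+j}\cap X$ are disks. First I would pick an arc $\sigma$ on that component from $x_k$ to $x_{k+j}$ missing the node $p$ and the other blow-up points; the unit normal circle bundle of $D$ over $\sigma$ is a cylinder whose two boundary circles cap off the disk $E_k\cap X$ and the orientation-reversed disk $E_{k+j}\cap X$, producing a $2$-sphere that represents the lift of $E_k-E_{k+j}$ (unique modulo the torsion group $\operatorname{Im}(\mathrm{H}^1(D,\mathbb{Z})\to\mathrm{H}_2(X,\mathbb{Z}))$, so its $\Omega$-period is well defined). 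Since $E_k$ and $E_{k+j}$ are complex curves the $(2,0)$-form $\Omega$ restricts to zero on them, so the period collapses to the integral of $\operatorname{Res}\Omega=\mathrm{d}u/u^2$ along $\sigma$, i.e.\ the difference of $-1/u$ at the endpoints, which is $c_{k+j}/a_{k+j}-c_k/a_k$.

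For part (a): I would write $\gamma_1=[\widetilde M]-E_1$ with $M=\overline{x_4x_7}$ (this is where $x_4\ne x_7$ enters) and $\widetilde M$ its proper transform, so $[\widetilde M]=H-E_4-E_7$ in $\mathrm{H}_2(Y,\mathbb{Z})$. Because a line meets the smooth conic $Q$ in exactly two points and those are $x_4,x_7$, both of which are blown up, $\widetilde M$ meets $D$ transversally only at the point over $M\cap L$; and $E_1$ meets $D$ only at the point over $x_1\in L$, again on the component $\widetilde L\cong L$. Capping the disks $\widetilde M\cap X$ and $-(E_1\cap X)$ with the $S^1$-tube over an arc in $L$ joining $M\cap L$ to $x_1$ (again avoiding $p$ and the $x_i$'s) gives a $2$-cycle representing $\gamma_1$, and exactly as above its $\Omega$-period equals the integral of $\operatorname{Res}_L\Omega$ along that arc. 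A short computation locates $M\cap L$ as the point displayed in the statement, and it is distinct from $p$ since $p\in Q$ and a line cannot pass through three points of the smooth conic $Q$, so in particular $M=\overline{x_4x_7}$ avoids $p\in L$. Evaluating $-1/u$ at $M\cap L$ and at $x_1$ then yields the stated closed form.

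I do not expect a genuine obstacle; the content is bookkeeping of which points on $D$ each cycle hits. The two places needing care are: fixing once and for all the orientation convention for the Leray coboundary (the unit-normal circle bundle), so the overall sign agrees with the statement; and confirming that the connecting arcs can be taken disjoint from the node $p$ and from every $x_i$, so that $\operatorname{Res}\Omega$ is holomorphic along them and the tube integral is legitimate. Both are immediate, since each component of $D$ is a $\mathbf{P}^1$ with only finitely many punctures, and one may perturb the $x_i$'s into general position without changing the period.
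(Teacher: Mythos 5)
Your proposal is exactly the computation the paper intends: its printed ``proof'' merely says the lemma follows from $\operatorname{Res}_{L}\Omega=\operatorname{Res}_{Q}\Omega=\mathrm{d}u/u^{2}$ and the residue theorem and omits all details, and your realization of $E_{k}-E_{k+j}$ and of $\gamma_{1}=[\widetilde{M}]-E_{1}$ by capping the holomorphic pieces with Leray $S^{1}$-tubes over arcs in $L$ or $Q$, killing the surface contributions because $\Omega$ is of type $(2,0)$, and evaluating the primitive $-1/u=-z/x$ at the endpoints is precisely the construction used in the paper's proof of Lemma~\ref{lem:period-calculation}. The only things to pin down are the tube orientation you already flag and the coordinates of $\overline{x_{4}x_{7}}\cap\{y=0\}$, which you should re-derive rather than take from the statement: expanding the determinant gives $[a_{4}b_{7}-a_{7}b_{4}\mathpunct{:}0\mathpunct{:}c_{4}b_{7}-c_{7}b_{4}]$ rather than the point displayed in part (a), so the first fraction there appears to be inverted (a typo in the statement, not a flaw in your method).
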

  \begin{proof}
  This follows from the formulae 
  \begin{align*}
  \operatorname{Res}_{L}\Omega = \frac{\mathrm{d} u}{u^{2}},~\mbox{and}~
  \operatorname{Res}_{Q}\Omega = \frac{\mathrm{d} u}{u^{2}}
  \end{align*}
  and the residue theorem. The proof is hence omitted.
  \end{proof}
  By Lemma \ref{lem:period-type-iv}, the vector \((y_{1},\ldots,y_{8})\in\mathbb{C}^{8}\)
determines \(x_{1},\ldots,x_{9}\) on \(\underline{D}\setminus\{p\}\).
Indeed, we can put \(x_{4}=p_{\mathfrak{r},4}\) and 
\(x_{7}=p_{\mathfrak{r},7}\) and the results in (a) and (b) in
Lemma \ref{lem:period-type-iv} would determine the location of all the rest \(x_{i}\)'s.
The only thing we have to show is that 
\(Y=\mathrm{Bl}_{\{x_{1},\ldots,x_{9}\}}\mathbf{P}^{2}\)
is a rational elliptic surface, i.e.,
there is a smooth cubic passing through \(x_{1},\ldots,x_{9}\).

Again it suffices to construct a cubic passing through
the points \(x_{1},\ldots,x_{9}\). This can be done directly. Indeed, 
suppose the coordinate of \(x_{i}\) is given as
in Lemma \ref{lem:period-type-iv}. The cubic defined by 
\begin{equation}
\prod_{i=1,2,3} (c_{i}x-a_{i}z) + y\cdot (ax^{2} + by^2 + cz^{2} + dxy + eyz + fxz)=0
\end{equation}
passing through \(x_{1},x_{2},x_{3}\). 
Note that the rational curve \(Q\) is parameterized by
\begin{equation*}
[\alpha\mathpunct{:}\beta]\mapsto [\alpha\beta\mathpunct{:}\alpha^{2}\mathpunct{:}\beta^{2}].
\end{equation*}
Now set \(yz=-x^{2}\) in the above equation. It
follows that \(b\), \(c\), \(d\), \(f\) and \(a-e\) are uniquely determined.
This shows that there exists a one parameter family of cubics
passing through \(x_{1},\ldots,x_{9}\) and therefore 
implies the existence of the cubic \(\underline{D}\) other than \(y(x^{2}+yz)\).

 \subsubsection{{\bf Type $\mathrm{II}^{\ast}$}} 
 \label{subsubsec:type-ii*-construction}
 A type \(\mathrm{II}^{\ast}\)
 fibre is the $E_8$ configuration. 
 Assume that $Y$ is a rational elliptic surface with an type $\mathrm{II}^{\ast}$ fibre $D$. The section of $Y$ can only intersect the unique component of $D$ with multiplicity one. One can then iteratively contracts the section, the component with multiplicity $1,2,3,4,5,6,4,2$ (in total nine curves) and end up with a smooth projective surface of Picard number one, that is, $\mathbf{P}^2$. The only non-contracted component of $D$ in the process has multiplicity three.    
 In other words, any rational elliptic surface with a type $\mathrm{II}^{\ast}$ fibre can be realized as blow up on the base points of the cubic pencil containing a triple line which is tri-tangent to any other smooth element in the pencil. If the pencil contains a cusp curve, then the singular configuration of $Y$ is $\mathrm{II}^{\ast}\mathrm{II}$. Otherwise, the pencil contains a nodal curve and the singular configuration of $Y$ is $\mathrm{II}^{\ast}\mathrm{I}_1^2$. From the long exact sequence \eqref{eq:long-exact-sequence}, we have $\mathrm{H}_2(X)$ is of rank one and generated by the fibre class of $Y$. Thus, the periods all vanish in both cases. Depending on the pencil contains a cusp cubic or not, there are exactly two different rational elliptic surfaces with an $\mathrm{II}^{\ast}$ fibre. Both of them are extremal rational elliptic surfaces (rational elliptic surfaces whose relative automorphism group is finite, cf.~\cite{MP}) and their singular configuration is $\mathrm{II}^{\ast}\mathrm{II}$ or $\mathrm{II}^{\ast}\mathrm{I}_{1}^{2}$. One can have an isotrivial deformation of the latter which degenerates to the former. 
In particular, the periods won't distinguish these two cases. 


 \subsubsection{{\bf Type $\mathrm{III}^{\ast}$}}
 \label{subsubsec:type-iii*-construction}
 A type \(\mathrm{III}^{\ast}\) fibre is the $E_7$ configuration.
 Assume that $Y$ is a rational elliptic surface with a type $\mathrm{III}^{\ast}$ fibre $D$. Sections of $Y$ must intersect a component of $D$ with multiplicity one. One can then iteratively contracts the section, followed by the components with multiplicity $1,2,3,4,2$ (six curves in total) and end up with a smooth projective surface $Y'$ of Picard group rank four. Denote by $C_1,C_2,C_3$ the image of the remaining components of $D$ with multiplicity $1,2,3$ respectively. Recall that $\mathbf{P}^1\times \mathbf{P}^1$ contains no curve with negative self-intersection and $\mathbb{F}_2$ contains a unique one such curve. We can conclude that the minimal model of $Y'$ must be $\mathbf{P}^2$ since $C_1^2=C_2^2=-2$. Then the $(-1)$-curve must intersect $C_1$ otherwise the image of $C_3$ to $\mathbf{P}^2$ would have self-intersection $2$ which is absurd. One may iteratively contracts the sections and the components with multiplicity $1,2,3,4,3,2$ (seven curves in total) from $Y$. Denote the resulting smooth projective surface by $Y''$ and the image of the remaining components of $D$ with multiplicity $i$ by $D_i\subseteq Y''$ for $i=1,2$. From the earlier discussion $D_1$ must intersect a $(-1)$-curve. We claim that $Y''$ then becomes the Hirzebruch surface $\mathbb{F}_1$. Indeed, any irreducible curve in $Y$ has self-intersection at least $-2$, so we may exclude the possibility of $\mathbb{F}_n$, $n\geq 3$. Notice that $D_2^2=1$ and the self-intersection pairing in $\mathbf{P}^1\times \mathbf{P}^1$ or $\mathbb{F}_2$ are even. So the claim is established. From $D_1^2=0$ and Riemann--Roch theorem, $D_1$ must be a fibre. In particular, $D_1$ intersects a $(-1)$-curve. Therefore, after contracting this $(-1)$-curve to $\mathbf{P}^2$, the image of $D$ is a union of a double line and a line. 
 
 To sum up, any rational elliptic surface with an $\mathrm{III}^{\ast}$ fibre can be realized as a blow-up of the base locus of the pencil spanned by a smooth cubic \(\underline{D}\) and a union of a double line $M$ and a line $N$, with the double line intersecting the smooth cubic at its flex point and the other line \(N\) also passes through the flex point of the cubic. 
 

There is another way to construct a rational elliptic
surface with an \(\mathrm{III}^{\ast}\)-fibre which
is easier to calculate the periods. 
Consider a triple line \(\underline{D}=3L\) in \(\mathbf{P}^{2}\).
Take \(C\) to be a smooth cubic which is tangent at \(p\in \underline{D}\)
and intersects transversally at another point \(q\in \underline{D}\).
Blowing up \(p\) and \(q\) yields a rational elliptic surface
with an \(\mathrm{III}^{\ast}\) fibre.
Explicitly, if we denote by \([x\mathpunct{:}y\mathpunct{:}z]\)
the coordinate on \(\mathbf{P}^{2}\),
we can take \(\underline{D}_{\mathrm{red}}
=\{x=0\}\) and \(C\)
to be the plane curve defined by 
\begin{equation*}
y^{2}z + x(z^{2}+xy+a'yz),~a'\in\mathbb{C}^{\ast}.
\end{equation*}
In which case, \(p=[0\mathpunct{:}0\mathpunct{:}1]\)
and \(q=[0\mathpunct{:}1\mathpunct{:}0]\)
One checks that this is smooth 
whenever \(q^{3}\ne -27\). 
Using change of variables, the equation displayed above
can be transformed into 
\begin{equation}
\label{eq:type-iii-*}
y^{2}z + x(z^{2}+axy+yz),~a\in\mathbb{C}^{\ast}.
\end{equation}
We see that \eqref{eq:type-iii-*}
is smooth for general \(a\).
If it happens that \eqref{eq:type-iii-*} is singular, we can 
always add a multiple of \(x^{3}\)
to the equation to make it smooth. In any case,
we obtain a rational elliptic surface with 
singular fibre configuration \(\mathrm{III}^{\ast}\) at infinity.

To obtain cycles in \(\mathrm{H}_{2}(X,\mathbb{Z})\),
let \(T'\) be the tangent line of \(C\) at 
\([0\mathpunct{:}1\mathpunct{:}0]\), i.e., \(T'=\{z=0\}\).
After blowing-ups, the proper transform \(T\) of \(T'\) becomes a 
\((-1)\) curve and therefore it is a section.
Then \(\gamma_{1}:=[T]-[E]\), where \(E\) is the section obtained in the last step of
blow-ups of \(\mathbf{P}^{2}\) at \([0\mathpunct{:}1\mathpunct{:}0]\),
gives an element in \(\mathrm{H}_{2}(X,\mathbb{Z})\).
The fibre class \([f]\) 
gives another element in \(\mathrm{H}_{2}(X,\mathbb{Z})\).
One can check \(\mathcal{B}:=\{\gamma_{1},[f]\}\) is a basis
of \(\mathrm{H}_{2}(X,\mathbb{Z})\).
As before, we shall pick a smooth cubic and a basis of the 
homology of its complement (after blow-ups) as above
to serve our marked ALG pair of type \(\mathrm{III}^{\ast}\).
We denote the pair by \((Y_{\mathfrak{r}},D_{\mathfrak{r}})\)
and the basis by \(\mathcal{B}_{\mathfrak{r}}\).


 We now
 choose a section of \(\Omega_{\mathbf{P}^{2}}(\underline{D})\)
 \begin{equation}
 \Omega:=\frac{x\mathrm{d}y\wedge\mathrm{d}z-y\mathrm{d}x\wedge
 \mathrm{d}z+z\mathrm{d}x\wedge\mathrm{d}y}{x^{3}}.
 \end{equation}
 We shall compute the periods using the two form \(\Omega\).

Let us investigate the blow-ups over \([0\mathpunct{:}1\mathpunct{:}0]\) first.
Using the affine coordinates \(u:=x/y\) and \(v:=z/y\),
the form \(\Omega\) is transformed into
\begin{equation}
\Omega = -\frac{\mathrm{d}u\wedge\mathrm{d}v}{u^{3}}
\end{equation}
and \(C\) is defined by 
\begin{equation}
\{v + u(v^{2}+au+v)=0\}.
\end{equation}
Now we compute the blow-up. Set \(v=us\) (here \(s\) is 
the coordinate on \(\mathbf{P}^{1}\)). We then have
\begin{equation}
\Omega = -\frac{\mathrm{d}u\wedge\mathrm{d}s}{u^{2}}.
\end{equation}
Here \(\{u=0\}\) corresponds to the expectional
divisor (with multiplicity two as expected). In the meanwhile,
the proper transform of \(C\) is 
\begin{equation}
\{s+u^{2}s^{2}+au + us=0\}
\end{equation}
and the proper transform of \(T'\) is \(\{s=0\}\). 
We blow up at \((u,s)=(0,0)\) one more time.
Let \(s=ut\). Then the meromorphic two form
becomes
\begin{equation*}
\Omega = -\frac{\mathrm{d}u\wedge\mathrm{d}t}{u}.
\end{equation*}
The proper transform of \(C\) is 
\begin{equation}
\{t + u^{3}t^{2} + a + ut=0\}
\end{equation}
and the proper transform of \(T'\) is defined by
\(\{t=0\}\). Denote by \(E'\) the exceptional
divisor of the second blow-up. By our convention, \(t\) serves as an affine
coordinate on \(E'\cong \mathbf{P}^{1}\).
In order to achieve \(Y\), we need one more blow up at \((u,t)=(0,-a)\),
the intersection of the proper transform of \(C\) and \(E'\).
Denote by \(E\) the exceptional divisor and by \(T\)
the proper transform of \(T'\). Then 
\(\gamma_{1}:=[E]-[T]\) represents a homology cycle in \(X:=Y\setminus D\).
The cycle \(\gamma_{1}\) together with the fibre class \([f]\) 
form a basis \(\mathcal{B}\) of \(\mathrm{H}_{2}(X,\mathbb{Z})\).
One can compute
\begin{equation}
\int_{[E]-[T]} \Omega = \int_{0}^{-a}\mathrm{d}t = -a.
\end{equation}
By varying \(a\), we have proven that
\begin{equation}
\operatorname{Im}(\mathcal{P})=\{(y_{1},y_{2})~|~y_{2}=0\}
\end{equation}
where \(y_{1}\) (resp.~\(y_{2}\)) is the coordinate corresponding to 
\begin{equation}
\int_{\gamma_{1}}\Omega,~\quad\mbox{(resp.~\(\displaystyle\int_{[f]}\Omega \equiv 0\).)}
\end{equation}

Let us now describe the moduli space of rational elliptic surfaces with a \(\mathrm{III}^{\ast}\)-fibre.
From the classification of the singular configuration of a rational elliptic surface $Y$ containing a type $\mathrm{III}^{\ast}$-fibre, $Y$ must contain an $\mathrm{I}_1$-fibre unless its singular configuration is $\mathrm{III}^{\ast}\mathrm{III}$. 
We have the following two cases:
\begin{itemize}
\item[(a)] The pencil contains a nodal curve \(C\).
Up to the $\mathrm{PGL}(3)$-action on $\mathbf{P}^2$, we may assume that the nodal curve $C$ is of the form $x^{3}+y^{3}+xyz=0$ with a node at \(p=[0\mathpunct{:}0\mathpunct{:}1]\). 
Indeed, if \(C\) is a nodal curve with a node at \(p\), we can always move \(p\) to \([0\mathpunct{:}0\mathpunct{:}1]\).
Let \(F(x,y,z)\) be the defining equation of \(C\) and 
\(f(x,y)=F(x,y,1)\) be the equation of \(C\) on the affine chart \(\{z\ne 0\}\).
We may further use the $\mathrm{PGL}(3)$-action to assume that 
\begin{equation}
f(x,y) = xy + g(x,y)
\end{equation}
where \(g(x,y)\) is homogeneous of degree \(3\). In other words, \(F(x,y,z)=xyz+g(x,y)\).
Now we can use the remaining symmetries to eliminate the \(x^{2}y\) and \(xy^{2}\) terms
in \(g\) as well as adjust the coefficients of \(x^{3}\) and \(y^{3}\).
As a result, we achieve the equation \(x^{3}+y^{3}+xyz=0\).
It is known that there is an isomorphism 
\begin{equation*}
\mathbb{C}^{\ast}\cong C\setminus \{p\},~t\mapsto [t\mathpunct{:}-t^{2}\mathpunct{:}1-t^{3}].
\end{equation*} 
The flex points are located at $[1\mathpunct{:}-1\mathpunct{:}0]$, $[\omega\mathpunct{:}-\omega^{2}\mathpunct{:}0]$, and $[\omega^{2}\mathpunct{:}-\omega\mathpunct{:}0]$ where \(\omega\) is the primitive 3\textsuperscript{rd} root of unity. Moreover, these three flex points are equivalent under the $\mathrm{PGL}(3)$-action. One can easily check that if \(A\in \mathrm{PGL}(3,\mathbb{C})\) leaves \(x^{3}+y^{3}+xyz\) invariant and fixes $[1\mathpunct{:}-1\mathpunct{:}0]$, then either \(A=\mathrm{id}\) or \(A\colon x\mapsto y,~y\mapsto x,~z\mapsto z\). Then $M$ is the tangent line of $C$ at $[1\mathpunct{:}-1\mathpunct{:}0]$ and $N$ can be any line passing through $[1\mathpunct{:}-1\mathpunct{:}0]$. In particular, by rotating \(N\), we obtain a $\mathbf{P}^1$-family of rational elliptic surfaces with a $\mathrm{III}^{\ast}$-fibre. If $N$ meets the node of $C$, then the rational elliptic surface contains an $\mathrm{I}_2$-fibre. If $N$ is tangent to a smooth point of $C$, then the rational elliptic surface contains an $\mathrm{II}$-fibre. 
To sum up, the moduli space of rational elliptic surfaces with singular fibes of \(\mathrm{III}^{\ast}\) and \(\mathrm{I}_{1}\) is \(\mathbb{C}^{\ast}\subset\mathbf{P}^{1}\). The boundary points parameterize the rational elliptic surfaces with singular configuration \(\mathrm{II}^{\ast}\mathrm{I}_{1}^{2}\) and \(\mathrm{III}^{\ast}\mathrm{I}_{2}\mathrm{I}_{1}\).
\item[(b)] The pencil contains a cuspidal curve \(C\). One can use the $\mathrm{PGL}(3)$-action to assume that $C=\{y^2z=x^3\}$ and $p=[0\mathpunct{:}0\mathpunct{:}1]$ is the cusp. It is known that \(C\) the complement \(C\setminus \{p\}\) is isomorphic to \(\mathbb{C}\) as an
additive group via 
\begin{equation*}
\mathbb{C}\cong C\setminus \{p\},~t\mapsto [t\mathpunct{:}1\mathpunct{:}t^{3}]
\end{equation*}
and \(C\setminus\{p\}\) admits a unique flex point. 
(Recall that the group law on \(C\setminus\{p\}\) is defined
in the same manner as the one defined on elliptic curves. For \(P\) and \(Q\) on \(C\setminus \{p\}\), \(P+Q\) is the point \(R\in C\setminus\{p\}\) such that \(P\), \(Q\), and \(R\) are colinear.) Let \(M\) be the tangent of \(C\) at the flex point and \(N\) be a line passing through it. As in the previous case, rotating \(N\) gives rise to a \(\mathbf{P}^{1}\)-family of rational elliptic surfaces. If $N$ passes through smooth points of $C$, we obtain a rational elliptic surface with singular configuration $\mathrm{III}^{\ast}\mathrm{II}~\mathrm{I}_1$. Moreover, any such two lines determine the same rational elliptic surface. When $N$ passes through the cusp of $C$, then the resulting rational elliptic surface has the singular configuration $\mathrm{III}^{\ast} \mathrm{III}$. When \(N\) is also tangent to \(C\), the corresponding rational elliptic surface has the singular configuration \(\mathrm{II}^{\ast}\mathrm{II}\). As a summary, the parameter space of rational elliptic surface with a type \(\mathrm{III}^{\ast}\) fibre is a $\mathbf{P}^1$. The generic point of $\mathbf{P}^1$ parametrizes those with singular configuration $\mathrm{III}^{\ast}\mathrm{I}_1^{3}$ which admits degenerations to $\mathrm{III}^{\ast}\mathrm{II}~\mathrm{I}_1$ and to $\mathrm{II}^{\ast}\mathrm{II}$. 
\end{itemize}
In particular, there are three rational elliptic surfaces with trivial periods, with singular configuration $\mathrm{III}^{\ast}\mathrm{III}$, $\mathrm{III}^{\ast}\mathrm{II}~\mathrm{I}_1$, and $\mathrm{III}^{\ast}\mathrm{I}_2\mathrm{I}_1$.  
 
%
%

 \subsubsection{{\bf Type $\mathrm{IV}^{\ast}$}}
 \label{subsubsec:type-iv*-construction}
 A type \(\mathrm{IV}^{\ast}\) is the $E_6$ configuration.

 To construct the model, we consider \(\underline{D}=\{x^{3}=0\}\).
 Let \(C\) be a cubic of the form
 \begin{equation*}
 yz(y-z)+x(cxy+xz+dyz)~\mbox{with}~c,d\in\mathbb{C}.
 \end{equation*}
 Then \(C\) intersects \(\underline{D}_{\mathrm{red}}\) at \([0\mathpunct{:}0\mathpunct{:}1]\),
 \([0\mathpunct{:}1\mathpunct{:}1]\),
 \([0\mathpunct{:}1\mathpunct{:}0]\) with all multiplicity one.
 Moreover, one can check that for any \(c,d\in\mathbb{C}\),
 the linear system spanned by \(C\) and \(\underline{D}\)
 contains a smooth member. As before, we 
 pick a smooth cubic \(C\) (with constants 
 \(c_{\mathfrak{r}}\) and \(d_{\mathfrak{r}}\) in
 the equation) to build our 
 marked reference ALG pair of type \(\mathrm{IV}^{\ast}\).

 Recall that in the present situation, one can achieve
 a desired pair \((Y_{\mathfrak{r}},D_{\mathfrak{r}})\) by blowing up at 
 those intersections \(C\cap \underline{D}\) (we 
 blow up three times at each intersection point and
 there are nine blow-ups needed in total).
 Denote by \(E_{\mathfrak{r},0}\) and \(E_{\mathfrak{r},\infty}\) the exceptional
 divisor over \([0\mathpunct{:}0\mathpunct{:}1]\)
 and \([0\mathpunct{:}1\mathpunct{:}0]\) from the last (the third) blow-up.
 Let \(T_{\mathfrak{r},0}'\) (resp.~\(T_{\mathfrak{r},\infty}'\)) be the tangent line of \(C\) 
 at \([0\mathpunct{:}0\mathpunct{:}1]\)
 (resp.~\([0\mathpunct{:}1\mathpunct{:}0]\)) and
 \(T_{\mathfrak{r},0}\) (resp.~\(T_{\mathfrak{r},\infty}\)) be the proper 
 transform on \(Y\). It is easy to check that 
 \begin{equation}
 \mathcal{B}_{\mathfrak{r}}:=\{
 [E_{\mathfrak{r},0}]-[T_{\mathfrak{r},0}],~[E_{\mathfrak{r},\infty}]-
 [T_{\mathfrak{r},\infty}],[f]\}
 \end{equation}
 is a basis of \(\mathrm{H}_{2}(X_{\mathfrak{r}},\mathbb{Z})\) 
 where \(X_{\mathfrak{r}}=Y_{\mathfrak{r}}\setminus D_{\mathfrak{r}}\) as before.

 Like in the previous case, we take
 \begin{equation}
 \Omega = \frac{x\mathrm{d}y\wedge\mathrm{d}z-
  y\mathrm{d}x\wedge\mathrm{d}z+z\mathrm{d}x\wedge\mathrm{d}y}{x^{3}}.
 \end{equation}
 For arbitrary \(c,d\in\mathbb{C}\), we can compute (as in the case of type 
 \(\mathrm{III}^{\ast}\))
 \begin{equation*}
 \int_{[E_{0}]-[T_{0}]}\Omega = -c,~\mbox{and}~
 \int_{[E_{\infty}]-[T_{\infty}]}\Omega = -d.
 \end{equation*}
 This shows that
 \begin{equation}
\operatorname{Im}(\mathcal{P})=\{(y_{1},y_{2},y_{3})~|~y_{3}=0\}.
\end{equation}

\begin{figure}
\includegraphics[scale=0.8]{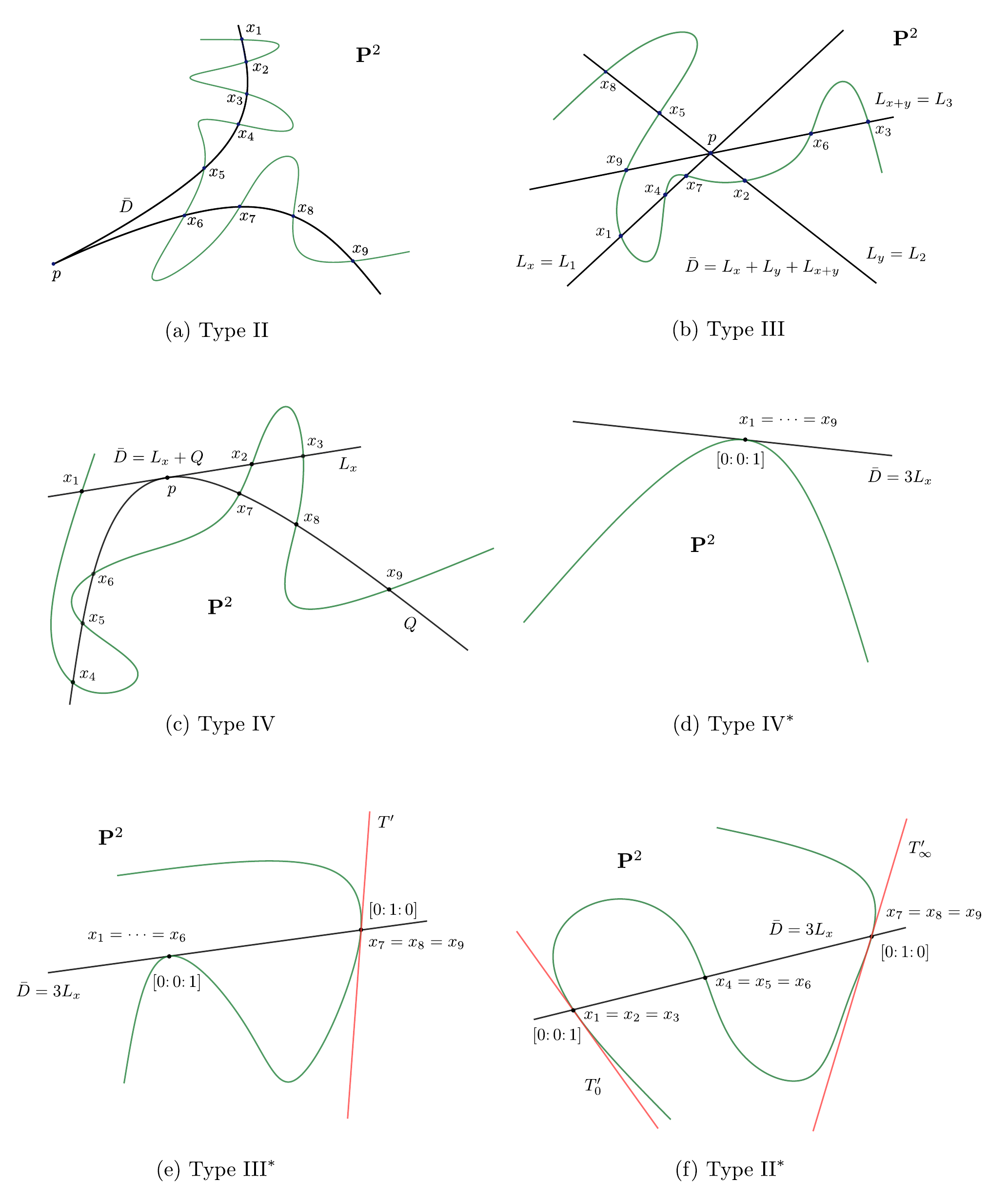}
\caption{The constructions of 
\(\mathrm{ALG}\) pairs of type \(\mathrm{II}\), \(\mathrm{III}\),
\(\mathrm{IV}\), \(\mathrm{IV}^{\ast}\), \(\mathrm{III}^{\ast}\),
and \(\mathrm{II}^{\ast}\). The black lines stand for the singular divisor \(\underline{D}\),
the green curves stand for the cubic \(C\) in the corresponding pencil, and the
red lines are the image of the \((-1)\) curves under \(Y\to\mathbf{P}^{2}\).}
\label{fig:alg-pair}
\end{figure}

\subsubsection{{\bf Type $\mathrm{I}_{0}^{\ast}$}}
\label{subsubsec:type-i0*-construction}
A type \(\mathrm{I}_{0}^{\ast}\) fibre is the \(D_{4}\) configuration.
We begin with \(\underline{D}=\{x^{2}y=0\}\) and consider a cubic \(C\) intersecting
with \(\underline{D}\) at three distinct points on each irreducible component.
Using the \(\mathrm{PGL}(3,\mathbb{C})\)-action on \(\mathbf{P}^{2}\), 
we may assume that 
\begin{itemize}
\item \(C\cap \{x=0\}=
\left\{[0\mathpunct{:}1\mathpunct{:}0], 
[0\mathpunct{:}1\mathpunct{:}1],
[0\mathpunct{:}1\mathpunct{:}a]\right\}\);
\item \(C\cap \{y=0\}=
\left\{[1\mathpunct{:}0\mathpunct{:}0], 
[1\mathpunct{:}0\mathpunct{:}b],
[1\mathpunct{:}0\mathpunct{:}c]\right\}\) with \(b,c\in\mathbb{C}^{\ast}\) distinct;
\end{itemize} 
It follows that \(C\) is 
defined by the following equation
\begin{equation*}
z(y-z)(ay-z)+x\left(dy^{2}-(b+c)z^{2}+fxy+bcxz+hyz\right)=0.
\end{equation*}
We may as well assume that \(f=0\) by adding the
defining equation of \(\underline{D}\). To summarize,
we may assume that \(C\) is given by 
\begin{equation*}
z(y-z)(ay-z)+x\left(dy^{2}-(b+c)z^{2}+bcxz+hyz\right)=0.
\end{equation*}
Picking a smooth cubic \(C\) of the form above and
blowing-up at those points yield a 
ALG pair \((Y,D)\) of type \(\mathrm{I}_{0}^{\ast}\). 
We need to construct a basis of \(\mathrm{H}_{2}(X,\mathbb{Z})\),
i.e., we need to construct five cycles lying in \([D]^{\perp}\)
under the identification.
\begin{itemize}
\item Consider the tangent of \(C\) at \([0\mathpunct{:}1\mathpunct{:}0]\).
Explicitly, it is given by  
\begin{equation*}
dx+az=0
\end{equation*}
which intersects \(\{y=0\}\) at 
\([a\mathpunct{:}0\mathpunct{:}-d]\).
Denote by \(L_{1}\) its proper transform on \(Y\);
\item Consider the tangent of \(C\) at \([0\mathpunct{:}1\mathpunct{:}1]\).
Explicitly, it is given by  
\begin{equation*}
\left(d+h-b-c\right)x-(1-a)y+(1-a)z=0.
\end{equation*}
It intersects \(\{y=0\}\) at 
\begin{equation*}
\left[1-a\mathpunct{:}0\mathpunct{:}b+c-d-h\right].
\end{equation*}
Denote by \(L_{2}\) its proper transform on \(Y\);
\item Consider the tangent of \(C\) at \([0\mathpunct{:}1\mathpunct{:}a]\).
Explicitly, it is given by  
\begin{equation*}
(-a^{2}(b+c)+d+ah)x-a(1-a)(z-ay)=0
\end{equation*}
It intersects \(\{y=0\}\) at 
\begin{equation*}
\left[a(1-a)\mathpunct{:}0\mathpunct{:}-a^{2}(b+c)+d+ah)\right].
\end{equation*}
Denote by \(L_{2}\) its proper transform on \(Y\);

\item Let \(E_{7}\) be the exceptional divisor over 
\([1\mathpunct{:}0\mathpunct{:}b]\) in the second blow-up;
\item Let \(E_{8}\) be the exceptional divisor over 
\([1\mathpunct{:}0\mathpunct{:}c]\);
\item Let \(E_{9}\) be the exceptional divisor over \([1\mathpunct{:}0\mathpunct{:}0]\).
\end{itemize}
Now we can construct our cycles via
\begin{itemize}
\item[(1)] \(\gamma_{1}=L_{1}-E_{9}\);
\item[(2)] \(\gamma_{2}=L_{2}-E_{9}\);
\item[(3)] \(\gamma_{3}=L_{3}-E_{9}\);
\item[(4)] \(\gamma_{4}=E_{9}-E_{7}\);
\item[(5)] \(\gamma_{5}=E_{9}-E_{8}\).
\end{itemize}
One can easily check that each of them lies in \([D]^{\perp}\)
and they form a
basis of \(\mathrm{H}_{2}(X,\mathbb{Z})\). 
Also the fibre class \([f]\) is
given by \(\sum_{i=1}^{5}[\gamma_{i}]\). In the 
present case, we take
\begin{equation*}
\Omega = \frac{x\mathrm{d}y\wedge\mathrm{d}z-
  y\mathrm{d}x\wedge\mathrm{d}z+z\mathrm{d}x\wedge\mathrm{d}y}{x^{2}y}.
\end{equation*}
It is also straightforward to check (parallel
to the computation in previous sections) that the set of period vectors
\begin{equation*}
\left\{(y_{1},\ldots,y_{5})~\Big|~y_{i}:=\int_{[\gamma_{i}]}\Omega,~i=1,\ldots,5\right\}
\end{equation*}
is equal to \(\{(y_{1},\ldots,y_{5})~|~\sum_{i=1}^{5}y_{i}=0\}\).
Indeed, one can check that
\begin{equation*}
\int_{\gamma_{1}}\Omega = \frac{-d}{a},~
\int_{\gamma_{2}}\Omega = \frac{b+c-d-h}{1-a},~
\int_{\gamma_{3}}\Omega = \frac{-a^{2}(b+c)+d+ah}{a(1-a)},~
\int_{\gamma_{4}}\Omega = -b,~\mbox{and}~
\int_{\gamma_{5}}\Omega = -c.
\end{equation*}
This proves the surjectivity of the period
map in this case.

\subsubsection{{\bf Type $\mathrm{I}_{1}^{\ast}$}}
\label{subsubsec:type-i1*-construction}

A type \(\mathrm{I}_{1}^{\ast}\) fibre is the \(D_{6}\) configuration.
We begin with \(\underline{D}=\{x^{2}y=0\}\) and consider a cubic \(C\) intersecting
with \(\underline{D}\) at three distinct points on \(\{y=0\}\) but 
tangent to \(\{x=0\}\).
Using the \(\mathrm{PGL}(3,\mathbb{C})\)-action on \(\mathbf{P}^{2}\), 
we may assume that 
\begin{itemize}
\item \(C\cap \{x=0\}=
\left\{[0\mathpunct{:}1\mathpunct{:}0], 
[0\mathpunct{:}1\mathpunct{:}a]\right\}\)
and \(C\) is tangent to \(\{x=0\}\) at \([0\mathpunct{:}1\mathpunct{:}0]\).
\item \(C\cap \{y=0\}=
\left\{[1\mathpunct{:}0\mathpunct{:}0], 
[1\mathpunct{:}0\mathpunct{:}b],
[1\mathpunct{:}0\mathpunct{:}c]\right\}\) with \(b\ne c\);
\end{itemize} 
It follows that \(C\) is 
defined by the following equation
\begin{equation*}
z^{2}(ay-z)+x\left(dy^{2}+(b+c)z^{2}+fxy-bcxz+hyz\right)=0.
\end{equation*}
We may as well assume that \(f=0\) by adding the
defining equation of \(\underline{D}\). To summarize,
we may assume that \(C\) is given by 
\begin{equation*}
z^{2}(ay-z)+x\left(dy^{2}+(b+c)z^{2}-bcxz+hyz\right)=0.
\end{equation*}
Picking a smooth cubic \(C\) of the form above and
blowing-up at those points yield an
ALG pair \((Y,D)\) of type \(\mathrm{I}_{1}^{\ast}\). 
Note that \(C\) is smooth implies \(d\ne 0\).
We need to construct a basis of \(\mathrm{H}_{2}(X,\mathbb{Z})\),
i.e., we need to construct four cycles lying in \([D]^{\perp}\)
under the identification.
\begin{itemize}
\item Consider the tangent of \(C\) at 
\([0\mathpunct{:}1\mathpunct{:}a]\).
Explicitly, it is given by  
\begin{equation*}
(a^{2}(b+c)+d+ah)x-a^{2}(z-ay)=0.
\end{equation*}
which intersects \(\{y=0\}\) at 
\begin{equation*}
\left[a^{2}\mathpunct{:}0\mathpunct{:}a^{2}(b+c)+d+ah\right].
\end{equation*}
Denote by \(L\) its proper transform on \(Y\);
\item Consider a conic passing through \([1\mathpunct{:}0\mathpunct{:}0]\)
and tangent to \(\{x=0\}\)
at \([0\mathpunct{:}1\mathpunct{:}0]\) such that the intersection at
\([0\mathpunct{:}1\mathpunct{:}0]\) with \(C\) has multiplicity \(3\).
Explicitly, when \(d\ne 0\),
we could take for example
\begin{equation*}
az^{2}+dxy-abxz=0.
\end{equation*}
It intersects \(\{y=0\}\) at 
\begin{equation*}
\left[1\mathpunct{:}0\mathpunct{:}0\right]~\mbox{and}~
\left[1\mathpunct{:}0\mathpunct{:}b\right].
\end{equation*}
Denote by \(Q\) its proper transform on \(Y\);
\(Q\) is a \((-1)\) curve on \(Y\)
and therefore it is a section.
\item Let \(E_{7}\) be the exceptional divisor over 
\([1\mathpunct{:}0\mathpunct{:}b]\);
\item Let \(E_{8}\) be the exceptional divisor over 
\([1\mathpunct{:}0\mathpunct{:}c]\);
\end{itemize}
Let \(E_{8}\) be the exceptional divisor over \([1\mathpunct{:}0\mathpunct{:}c]\)
and \(E_{9}\) be the exceptional divisor over \([1\mathpunct{:}0\mathpunct{:}0]\).
Now we can construct our cycles via
\begin{itemize}
\item[(1)] \(\gamma_{1}=L-E_{9}\sim H-E_{5}-E_{6}-E_{9}\);
\item[(2)] \(\gamma_{2}=Q-E_{4}\sim 2H-(E_{1}+\cdots+E_{4})-E_{7}-E_{9}\);
\item[(3)] \(\gamma_{3}=E_{7}-E_{9}\);
\item[(4)] \(\gamma_{4}=E_{8}-E_{9}\);
\end{itemize}
One can easily check that each of them lies in \([D]^{\perp}\)
and they form a
basis of \(\mathrm{H}_{2}(X,\mathbb{Z})\). 
Also the fibre class \([f]\) is
given by \([\gamma_{1}]+[\gamma_{2}]-[\gamma_{4}]\). In the 
present case, we take
\begin{equation*}
\Omega = \frac{x\mathrm{d}y\wedge\mathrm{d}z-
  y\mathrm{d}x\wedge\mathrm{d}z+z\mathrm{d}x\wedge\mathrm{d}y}{x^{2}y}.
\end{equation*}
It is also straightforward to check (parallel
to the computation in previous sections) that the set of period vectors
\begin{equation*}
\left\{(y_{1},\ldots,y_{4})~\Big|~y_{i}:=\int_{[\gamma_{i}]}\Omega,~i=1,\ldots,4\right\}
\end{equation*}
is equal to \(\{(y_{1},\ldots,y_{4})~|~y_{1}+y_{2}-y_{4}=0\}\).
More accurately, one can check
\begin{equation*}
\int_{\gamma_{1}}\Omega=\frac{a^{2}(b+c)+d+ah}{a^{2}},~
\int_{\gamma_{2}}\Omega=-\frac{d}{a^{2}}-\frac{h}{a}-b,~\mbox{and}
\int_{\gamma_{4}}\Omega=c.
\end{equation*}
This proves the surjectivity of the period
map in this case.

\subsubsection{{\bf Type $\mathrm{I}_{2}^{\ast}$}}
\label{subsubsec:type-i2*-construction}

A type \(\mathrm{I}_{2}^{\ast}\) fibre is the \(D_{7}\) configuration.
We begin with \(\underline{D}=\{x^{2}y=0\}\) and consider a cubic \(C\) intersecting
with \(\underline{D}\) at three distinct points on \(\{y=0\}\) but 
tangent to \(\{x=0\}\). Moreover, we require that \(C\)
passes through the unique singularity in \(\underline{D}_{\mathrm{red}}\).
Using the \(\mathrm{PGL}(3,\mathbb{C})\)-action on \(\mathbf{P}^{2}\), 
we may assume that 
\begin{itemize}
\item \(C\cap \{x=0\}=
\left\{[0\mathpunct{:}1\mathpunct{:}0], 
[0\mathpunct{:}0\mathpunct{:}1]\right\}\)
and \(C\) is tangent to \(\{x=0\}\) at \([0\mathpunct{:}1\mathpunct{:}0]\).
\item \(C\cap \{y=0\}=
\left\{[1\mathpunct{:}0\mathpunct{:}0], 
[1\mathpunct{:}0\mathpunct{:}b],
[0\mathpunct{:}0\mathpunct{:}1]\right\}\);
\end{itemize} 
It follows that \(C\) is 
defined by the following equation
\begin{equation*}
z^{2}y+x\left(cy^{2}+dz^{2}+exy-bdxz+gyz\right)=0.
\end{equation*}
We may as well assume that \(e=0\) as before by adding the
defining equation of \(\underline{D}\). 
\begin{equation*}
z^{2}y+x\left(cy^{2}+dz^{2}-bdxz+gyz\right)=0.
\end{equation*}
Choosing a smooth cubic \(C\) of the form above and
blowing-up at those points yield an 
ALG pair \((Y,D)\) of type \(\mathrm{I}_{2}^{\ast}\). 
Now we need to construct a basis of \(\mathrm{H}_{2}(X,\mathbb{Z})\),
i.e., we need to construct four cycles lying in \([D]^{\perp}\)
under the identification
\([D]^{\perp}\subset\mathrm{H}^{2}(Y,\mathbb{Z})\cong\mathrm{H}_{2}(Y,\mathbb{Z})\).
\begin{itemize}
\item Consider a conic tangent to \(\{x=0\}\) 
at \([0\mathpunct{:}1\mathpunct{:}0]\) 
and meeting \(C\) at 
\([0\mathpunct{:}1\mathpunct{:}0]\)
with multiplicity four and passing through 
\([1\mathpunct{:}0\mathpunct{:}0]\).
Explicitly, it is given by  
\begin{equation*}
z^{2} +cxy+gxz=0.
\end{equation*}
which intersects \(\{y=0\}\) at 
\begin{equation*}
\left[1\mathpunct{:}0\mathpunct{:}0\right]~\mbox{and}~
\left[1\mathpunct{:}0\mathpunct{:}-g\right].
\end{equation*}
One checks the conic intersects
\(C\) at \([0\mathpunct{:}1\mathpunct{:}0]\)
with multiplicity four. Indeed, we can solve \(x\sim z^{2}\)
in the local ring at \([0\mathpunct{:}1\mathpunct{:}0]\).
Denote by \(Q\) its proper transform on \(Y\);
\item Consider the tangent of \(C\)
at \([0\mathpunct{:}0\mathpunct{:}1]\).
Explicitly, we have
\begin{equation*}
y+dx=0.
\end{equation*}
Denote by \(L\) the proper transform on \(Y\).
\end{itemize}
Let \(E_{8}\) be the exceptional divisor over \([1\mathpunct{:}0\mathpunct{:}b]\)
and \(E_{9}\) be the exceptional divisor over \([1\mathpunct{:}0\mathpunct{:}0]\).
Now we can construct our cycles via
\begin{itemize}
\item[(1)] \(\gamma_{1}=L-E_{7}\sim H-E_{5}-E_{6}-E_{7}\);
\item[(2)] \(\gamma_{2}=Q-E_{9}\sim 2H-(E_{1}+\cdots+E_{4})-2E_{9}\);
\item[(3)] \(\gamma_{3}=E_{8}-E_{9}\);
\end{itemize}
One can easily check that each of them lies in \([D]^{\perp}\)
and they form a
basis of \(\mathrm{H}_{2}(X,\mathbb{Z})\). 
Also the fibre class \([f]\) is
given by \([\gamma_{1}]+[\gamma_{2}]-[\gamma_{3}]\). In the 
present case, we take
\begin{equation*}
\Omega = \frac{x\mathrm{d}y\wedge\mathrm{d}z-
  y\mathrm{d}x\wedge\mathrm{d}z+z\mathrm{d}x\wedge\mathrm{d}y}{x^{2}y}.
\end{equation*}

\begin{lem}
We have
\begin{equation*}
\int_{[\gamma_{1}]}\Omega = -(b+g),~
\int_{[\gamma_{2}]}\Omega = g,~\mbox{and}~
\int_{[\gamma_{3}]}\Omega = -b.
\end{equation*}
\end{lem}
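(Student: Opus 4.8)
The plan is to evaluate the three integrals by the same Poincar\'{e}-residue computation used for Types $\mathrm{III}^{\ast}$, $\mathrm{IV}^{\ast}$, $\mathrm{I}_{0}^{\ast}$, and $\mathrm{I}_{1}^{\ast}$. First I would record the residues of $\Omega$. In the affine chart $\{x=1\}$ one has $\Omega = \dfrac{\mathrm{d}y\wedge\mathrm{d}z}{y}$, so $\operatorname{Res}_{\{y=0\}}\Omega=\mathrm{d}z$, where $z$ is the affine coordinate on the line $\{y=0\}$; this residue is unchanged under the blow-ups at points of $\{y=0\}$. Near the singular point $[0\mathpunct{:}0\mathpunct{:}1]$ of $\underline{D}_{\mathrm{red}}$ I would instead work in $\{z=1\}$, where $\Omega=\dfrac{\mathrm{d}x\wedge\mathrm{d}y}{x^{2}y}$, and perform the successive blow-ups resolving the base locus there (writing $y=xs$, then $s=xt$, and so on), at each step lowering the order of pole of the pulled-back form and reading off its residue on the new exceptional divisor; similarly at $[0\mathpunct{:}1\mathpunct{:}0]$, where the proper transform of the conic $Q$ is tangent to $\{x=0\}$ to order four.

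Since $\mathrm{H}^{1}(D,\mathbb{C})=0$ for an $\mathrm{I}_{2}^{\ast}$ configuration, the lift of $[D]^{\perp}\subset\mathrm{H}_{2}(Y,\mathbb{Z})$ to $\mathrm{H}_{2}(X,\mathbb{Z})$ is unique, so each $\int_{[\gamma_{i}]}\Omega$ is a well-defined number; writing $\gamma_{i}=A_{i}-B_{i}$ with $A_{i},B_{i}$ smooth rational curves in $Y$, I would represent $\gamma_{i}$ in $X$ as $(A_{i}\cap X)\cup(\text{a Leray tube } S^{1}\times[0,1])\cup(B_{i}\cap X)$ over a path $\sigma_{i}$ in $D$ joining the points $A_{i}\cap D$ and $B_{i}\cap D$. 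The two curve pieces contribute $0$, because a holomorphic $2$-form restricts to zero on a complex curve, and the tube contributes $\int_{\sigma_{i}}\operatorname{Res}\Omega$ by Fubini (with the normalization of periods fixed as in the previous cases). Concretely: for $\gamma_{3}=E_{8}-E_{9}$ both exceptional divisors meet $D$ along $\{y=0\}$, at $z=b$ and $z=0$, so $\int_{[\gamma_{3}]}\Omega=\pm\int_{0}^{b}\mathrm{d}z=\mp b$; for $\gamma_{2}=Q-E_{9}$ the relevant path, after the blow-ups over $[0\mathpunct{:}1\mathpunct{:}0]$, joins the point cut out by $Q$ (where $z=-g$, resp.\ $z=0$) to the point over $[1\mathpunct{:}0\mathpunct{:}0]$, giving $\pm g$; and for $\gamma_{1}=L-E_{7}$, supported over $[0\mathpunct{:}0\mathpunct{:}1]$, the explicit local blow-up (where the proper transforms of $L=\{y+dx=0\}$ and of $C$ determine the endpoints on the last exceptional divisor, which involve $b$ and $g$ through the equation of $C$) yields $\pm(b+g)$.

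Finally I would fix the orientation on each Leray tube and the ordering conventions in the definitions of $\gamma_{1},\gamma_{2},\gamma_{3}$ so that the signs come out as $-(b+g)$, $g$, and $-b$, and cross-check against the relation $[f]=[\gamma_{1}]+[\gamma_{2}]-[\gamma_{3}]$, which must integrate to $\int_{[f]}\Omega=0$. The main obstacle is purely bookkeeping: following, through the towers of blow-ups over $[0\mathpunct{:}1\mathpunct{:}0]$ and $[0\mathpunct{:}0\mathpunct{:}1]$, exactly which (infinitely near) points of $D$ the curves $L$, $Q$, $E_{7}$, $E_{8}$, $E_{9}$ pass through, and getting every sign right; once the local coordinates are set up the residue integrals themselves are elementary.
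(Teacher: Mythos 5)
Your proposal is correct and follows essentially the same route as the paper: represent each $\gamma_{i}=A_{i}-B_{i}$ (a difference of sections, each meeting $D$ once) by the two curve pieces joined by a Leray tube over a path in $D$, and evaluate the tube contribution by pulling $\Omega$ through the explicit towers of blow-ups and taking Poincar\'{e} residues — exactly the computation the paper carries out in local coordinates for $\gamma_{1}$ (obtaining the one-form $-\tfrac{1}{d}\tfrac{\mathrm{d}\beta}{\beta^{2}}$ on the relevant exceptional divisor) before declaring the other two cases similar. Your identification of the intersection points with $D$ ($z=0,b,-g$ on $\{y=0\}$ for $E_{9},E_{8},Q$, and the local picture over $[0\mathpunct{:}0\mathpunct{:}1]$ for $L-E_{7}$) matches the paper, and deferring the orientation conventions to the consistency check $\int_{[f]}\Omega=\int_{[\gamma_{1}]+[\gamma_{2}]-[\gamma_{3}]}\Omega=0$ is in line with the paper's own level of detail.
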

\begin{proof}
Using the affine coordinates \(u=x/z\) and \(v=y/z\),
we may re-write
\begin{equation*}
\Omega = \frac{\mathrm{d}u\wedge\mathrm{d}v}{u^{2}v}
\end{equation*}
and the equation of \(C\) and the tangent line are
given by
\begin{equation*}
v+du-bdu^{2}+guv+cuv^{2}~\mbox{and}~v+du.
\end{equation*}

Let \((u,v=\alpha u)\) be the coordinates on (an affine chart of) the blow-up (\(\alpha\) 
is the affine coordinate on the exceptional divisor). Then \(\Omega\)
is transformed into
\begin{equation*}
\Omega = \frac{\mathrm{d}u\wedge\mathrm{d}\alpha}{u^{2}\alpha}
\end{equation*}
and the proper transform of \(C\) and the tangent line are
given by
\begin{equation*}
\alpha+d-bdu+g\alpha u+c\alpha^{2}u^{2}~\mbox{and}~\alpha+d.
\end{equation*}
Now we have to blow-up at \(u=0\) and \(\alpha=-d\).
Let \(\alpha'=\alpha+d\). The equations above become
\begin{equation*}
\alpha'-bdu+g(\alpha'-d) u+c(\alpha'-d)^{2}u^{2}~\mbox{and}~\alpha'.
\end{equation*}
Moreover, we have
\begin{equation*}
\Omega = \frac{\mathrm{d}u\wedge\mathrm{d}\alpha'}{u^{2}(\alpha'-d)}.
\end{equation*}
Now we perform the blow-up via \(\alpha'=\alpha'\) and \(u=\alpha'\beta\). 
\begin{equation*}
\Omega = \frac{\mathrm{d}\beta\wedge\mathrm{d}\alpha'}{\alpha'\beta^{2}(\alpha'-d)}.
\end{equation*}
Taking the residue around \(\{\alpha'=0\}\), we obtain a one-form 
\begin{equation*}
-\frac{1}{d}\frac{\mathrm{d}\beta}{\beta^{2}}
\end{equation*}
on \(\mathbf{P}^{1}\)
with a double pole at \(\beta=0\). 
The proper transform of \(C\) becomes
\begin{equation*}
1 - (bd+gd)\beta + \mbox{higher order terms}.
\end{equation*}
Therefore, we have
\begin{equation*}
\int_{[\gamma_{1}]}\Omega = -(b+g).
\end{equation*}
The other cases are similar.
\end{proof}

It is also straightforward to check (parallel
to the computation in previous sections) that the set of period vectors
\begin{equation*}
\left\{(y_{1},y_{2},y_{3})~\Big|~y_{i}:=\int_{[\gamma_{i}]}\Omega,~i=1,2,3\right\}
\end{equation*}
is equal to \(\{(y_{1},y_{2},y_{3})~|~y_{1}+y_{2}-y_{3}=0\}\).
This proves the surjectivity of the period
map in this case.

\subsubsection{{\bf Type $\mathrm{I}_{3}^{\ast}$}}
\label{subsubsec:type-i3*-construction}

A type \(\mathrm{I}_{3}^{\ast}\) fibre is the \(D_{7}\) configuration.
We again begin with \(\underline{D}=\{x^{2}y=0\}\) and consider a cubic \(C\) intersecting
with \(\underline{D}\) as follows.
\begin{itemize}
\item \(C\cap \{x=0\}=
\left\{[0\mathpunct{:}1\mathpunct{:}0], 
[0\mathpunct{:}0\mathpunct{:}1]\right\}\)
and \(C\) is tangent to \(\{x=0\}\) at \([0\mathpunct{:}1\mathpunct{:}0]\).
\item \(C\cap \{y=0\}=
\left\{[1\mathpunct{:}0\mathpunct{:}0], 
[0\mathpunct{:}0\mathpunct{:}1]\right\}\) 
and \(C\) is tangent to \(\{y=0\}\)
at \([0\mathpunct{:}0\mathpunct{:}1]\);
\end{itemize} 
It follows that \(C\) is 
defined by the following equation
\begin{equation*}
z^{2}y+x\left(ay^{2}+dxz+eyz\right)=0.
\end{equation*}
Picking a smooth cubic \(C\) of the form above and
blowing-up at those points yield a 
ALG pair \((Y,D)\) of type \(\mathrm{I}_{3}^{\ast}\). 
We need to construct a basis of \(\mathrm{H}_{2}(X,\mathbb{Z})\),
i.e., we need to construct two cycles lying in \([D]^{\perp}\)
under the identification.
\begin{itemize}
\item Consider a conic intersecting \(C\) at 
\([0\mathpunct{:}1\mathpunct{:}0]\)
with multiplicity four.
Explicitly, we may pick  
\begin{equation*}
axy+z^{2}+exz=0.
\end{equation*}
which intersects \(\{y=0\}\) at 
\(\left[1\mathpunct{:}0\mathpunct{:}0\right]\) and 
\(\left[1\mathpunct{:}0\mathpunct{:}-e\right]\).

Denote by \(Q_{1}\) its proper transform on \(Y\);
\item Consider another conic intersecting 
\(C\) at \([0\mathpunct{:}0\mathpunct{:}1]\) with 
multiplicity greater than or equal to four and passing through 
\([0\mathpunct{:}1\mathpunct{:}0]\).
Explicitly, we can take
\begin{equation*}
dx^{2}+zy+exy=0.
\end{equation*}
Denote by \(Q_{2}\) its proper transform on \(Y\);
\item Let \(L\) be the proper transform of the line
connecting \([1\mathpunct{:}0\mathpunct{:}0]\)
and \([0\mathpunct{:}0\mathpunct{:}1]\).
\end{itemize}
Let \(E_{9}\) be the exceptional divisor over \([1\mathpunct{:}0\mathpunct{:}0]\).
Now we can construct our cycles via
\begin{itemize}
\item[(1)] \(\gamma_{1}=Q_{1}-E_{9}\sim 2H-(E_{1}+\cdots+E_{4})-2E_{9}\);
\item[(2)] \(\gamma_{2}=Q_{2}-L\sim 2H-(E_{5}+\cdots+E_{8})-E_{1}-(H-E_{1}-E_{9})\);
\end{itemize}
One can easily check that each of them lies in \([D]^{\perp}\)
and they form a
basis of \(\mathrm{H}_{2}(X,\mathbb{Z})\). 
Also the fibre class \([f]\) is
given by \([\gamma_{1}]+[\gamma_{2}]\). In the 
present case, we take
\begin{equation*}
\Omega = \frac{x\mathrm{d}y\wedge\mathrm{d}z-
  y\mathrm{d}x\wedge\mathrm{d}z+z\mathrm{d}x\wedge\mathrm{d}y}{x^{2}y}.
\end{equation*}
It is also straightforward to check (parallel
to the computation in previous sections) that the set of period vectors
\begin{equation*}
\left\{(y_{1},y_{2})~\Big|~y_{i}:=\int_{[\gamma_{i}]}\Omega,~i=1,2\right\}
\end{equation*}
is equal to the set \(\{(y_{1},y_{2})~|~y_{1}+y_{2}=0\}\).
This proves the surjectivity of the period
map in this case.

\subsubsection{{\bf Type $\mathrm{I}_{4}^{\ast}$}}
\label{subsubsec:type-i4*-construction}
A type \(\mathrm{I}_{4}^{\ast}\) fibre is the \(D_{8}\) configuration.
Suppose \((Y,D)\) is an \(\mathrm{ALG}^{\ast}\) pair of type \(\mathrm{I}_{4}^{\ast}\).
One easily checks that \(\mathrm{H}_{2}(X,\mathbb{Z})\cong\mathbb{Z}\)
is generated by the homology class of a fibre,
which is represented by a holomorphic curve.
Consequently, similar to the case \(\mathrm{II}^{\ast}\),
the period map must be constant (in fact the zero map).
To make the treatment comprehensive, we
will outline the construction of an \(\mathrm{ALG}^{\ast}\)
pair of type \(\mathrm{I}_{4}^{\ast}\).

We begin with a line and a conic tangent at a point \(p\).
Again we may assume that the line is given by \(\{x=0\}\) and
\(p=[0\mathpunct{:}0\mathpunct{:}1]\).
Consider a smooth cubic which intersects the conic
with multiplicity five. In which case,
the cubic intersects both the line and the conic
at a point other than \(p\). Explicity, we can take for instance
the conic to be
\begin{equation*}
\{xz-y^{2}=0\}
\end{equation*}
and the cubic to be
\begin{equation*}
\{xz^{2}-zy^{2}-x^{3}=0\}.
\end{equation*}
We can blow-up all the intersection points (nine points in total)
to achieve a rational elliptic surface with an \(\mathrm{I}_{4}^{\ast}\)
configuration.
\begin{figure}
\includegraphics[scale=0.8]{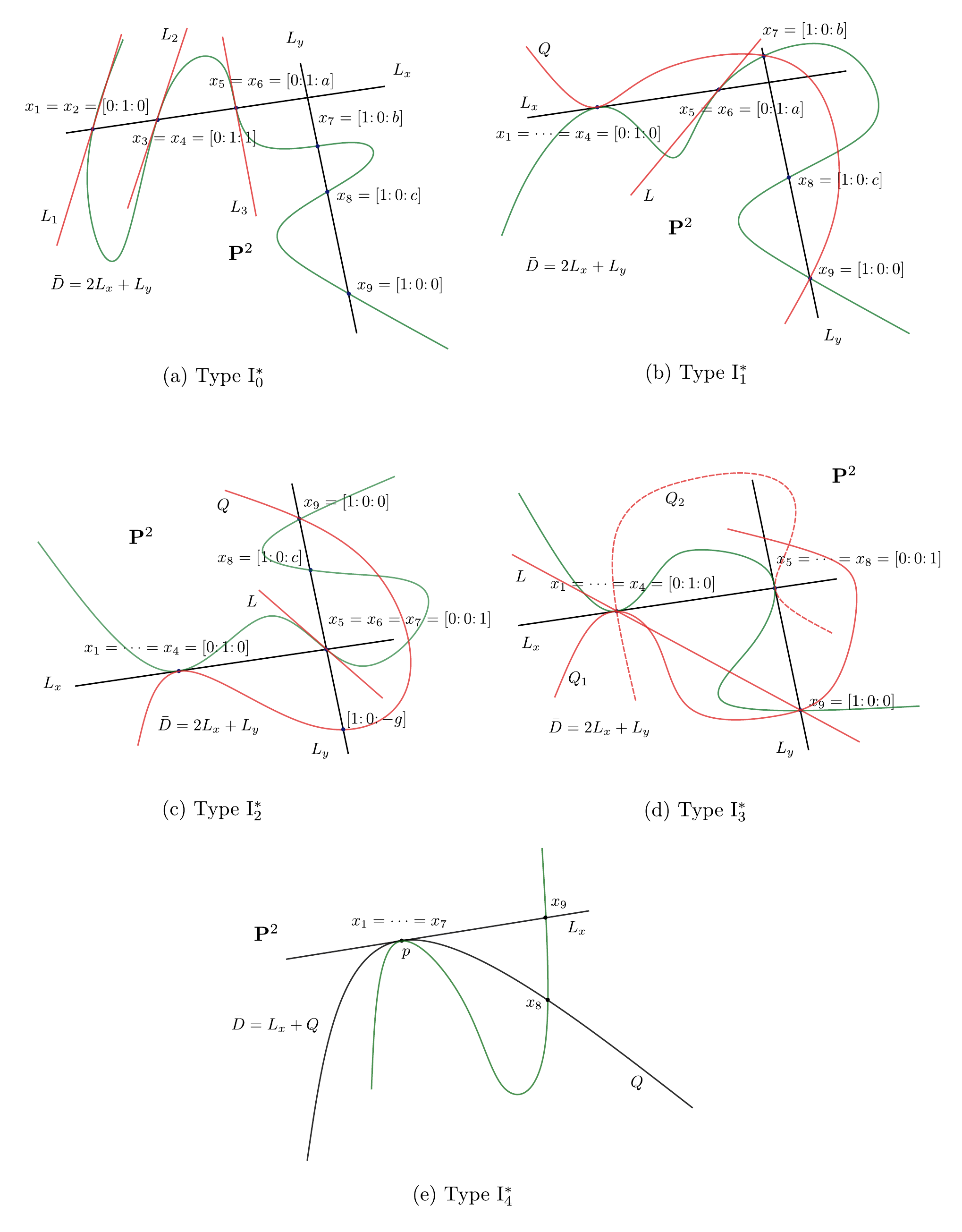}
\caption{The constructions of \(\mathrm{ALG}^{\ast}\)
pairs of type \(\mathrm{I}_{1}^{\ast}\) to
\(\mathrm{I}_{4}^{\ast}\) and ALG pair
of type \(\mathrm{I}_{0}^{\ast}\). 
Here again the black lines stand for the singular divisor \(\underline{D}\),
the green lines stand for the cubic \(C\) in the corresponding pencil, and the
red lines (solid/dashed) indicate the cycles we use to construct \((-1)\) curves on \(Y\).}
\end{figure}

\begin{rk}
\label{rk:std}
Let \((Y,D)\) be an \(\mathrm{ALG}\) or \(\mathrm{ALG}^{\ast}\) pair.
In each case above, our construction gives a deformation family
\(\pi\colon (\mathcal{Y},\mathcal{D})\to \mathcal{M}\) of \((Y,D)\) together with a section of 
\(\pi_{\ast}\Omega^{2}_{\mathcal{Y}\slash\mathcal{M}}(\mathcal{D})\)
with a fixed normalization; it
is the pullback of a fixed section 
\begin{equation}
\Omega\in \mathrm{H}^{0}(\mathbf{P}^{2},\Omega_{\mathbf{P}^{2}}^{2}(\underline{D})) = 
\mathrm{H}^{0}(Y,\Omega_{Y}^{2}(D)).
\end{equation}
This normalization will
become essential in the later subsection when we discuss the
\(\mathrm{ALG}\) or \(\mathrm{ALG}^{\ast}\) gravitational instantons, i.e.,
when metrics are involved.
\end{rk}

\subsection{\texorpdfstring{$\mathrm{ALG}$}{ALG} and \texorpdfstring{$\mathrm{ALG}^{\ast}$}{ALG*} gravitational instantons}
\label{subsec:alg-alg*-gra-ins}
We will start with introducing the models for $\mathrm{ALG}$ and $\mathrm{ALG}^{\ast}$ gravitational instantons. 
A model for $\mathrm{ALG}$ gravitational instantons is determined by $(\beta,\tau,L,R)$, 
where $R,L>0$ and $\beta,\tau$ is chosen from Table \ref{table: ALG} below. 

\begin{table}[htbp!]
\centering
\begin{spacing}{2}
\begin{tabular}{|l|c|c|c|c|c|c|c|} 
	\hline
	$\infty$   &  $\mathrm{I}_0^{\ast}$     &  $\mathrm{II}$      &  $\mathrm{III}$     & $\mathrm{IV}$    & $\mathrm{IV}^{\ast}$ & $\mathrm{III}^{\ast}$ & $\mathrm{II}^{\ast}$ \\
	\hline
	$\beta$    & $1/2$     & $1/6$     & $1/4$     & $1/3$ &  $2/3$ & $3/4$  & $5/6$  \\
	 \hline
	$\tau$   & any     & $e^{2\pi i/3}$    &  $i$  & $e^{2\pi i/3}$ &  $e^{2\pi i/3}$ &  $i$    & $e^{2\pi i/3}$    \\
	\hline
\end{tabular}
\end{spacing}
\caption{\label{table: ALG}}
\end{table}
For each triple $(\beta,\tau,L)$ chosen, denote by $X_{mod}=X_{mod}(\beta,\tau,L,R)$ the complex manifold
  \begin{align*}
     \left\{(u,v)\in \mathbb{C}\oplus \mathbb{C}~|~\operatorname{Arg}(u)\in[0,2\pi \beta]~\mbox{and}~|u|>R \right\}/ \sim,
  \end{align*} where the equivalence relation is given by 
  \begin{align*}
   &  (u,v)\sim (u,v+(m+n\tau)), \mbox{ for } (m,n)\in \mathbb{Z}^2,\\ 
    & (u,v) \sim (e^{2\pi i\beta}u,e^{-2\pi i\beta}v).
  \end{align*} The hyperK\"ahler triple is given by 
  \begin{align*}
     \omega_{mod}= \frac{\sqrt{-1}L^{2}}{2}(\mathrm{d}u\wedge \mathrm{d}\bar{u}+\mathrm{d}v\wedge \mathrm{d}\bar{v}), \hspace{4mm} 
     \Omega_{mod}= L^{2}\mathrm{d}u\wedge \mathrm{d}v.  
  \end{align*} 
By definition, there is a natural elliptic fibration structure $u^{\frac{1}{\beta}}\colon X_{mod}\rightarrow \{z~|~|z|>R^{\frac{1}{\beta}}\}\subseteq\mathbb{C}$ with torus fibres of area $L^2\operatorname{Im}(\tau)$. Moreover, one can fill in the fibre at infinity to partially compactify $X_{mod}$ and the singular configuration of the fibre at infinity is described in Table \ref{table: ALG} and always with monodromy of finite order. 

On the other hand, a model of $\mathrm{ALG}^{\ast}$ gravitational instanton is determined by $\nu\in \mathbb{N}$ and $R,\epsilon>0$. For each pair $(\nu,\epsilon)$, we denote model by $X^*_{mod}=X^*_{mod}(\nu,\epsilon,R)$ as a complex manifold is given by
   \begin{align*}
      \{(u,v)\in \mathbb{C}\oplus \mathbb{C}~|~u\neq 0, |u|<R\}/\sim, 
   \end{align*} where the equivalence equation is given by 
   \begin{align*}
     & (u,v)\sim (u^2,uv) \\
     & (u,v)\sim (u, v+m+n\frac{\nu}{\pi i}\log{u}), \mbox{ for }(m,n)\in \mathbb{Z}^2.  
   \end{align*} The Ricci-flat metric and the corresponding holomorphic volume form is given by 
   \begin{align*}
      &\omega_{mod}^*=i\frac{\nu |\log{|u|}|}{\pi \epsilon}\frac{\mathrm{d}u\wedge \mathrm{d}\bar{u}}{|u|^4}+\frac{i}{2}\frac{\pi \epsilon}{\nu |\log{|u|}|}(\mathrm{d}v-\frac{1}{i}\frac{\mbox{Im}(v)\mathrm{d}u}{u|\log{|u|}|})\wedge (\overline{\mathrm{d}v-\frac{1}{i}\frac{\mbox{Im}(v)\mathrm{d}u}{u|\log{|u|}|}}), \\
      &\Omega_{mod}^*= u^{-2}\mathrm{d}u\wedge \mathrm{d}v.
   \end{align*} There is also an natural elliptic fibration structure $u^{\frac{1}{2}}\colon X^*_{mod}\rightarrow \{ |0<|u|^{\frac{1}{2}}<R^{\frac{1}{2}} \} \subseteq \mathbb{C}$ and one can extends the fibration over the puncture by adding an $\mathrm{I}_{\nu}^*$-fibre.

  \begin{defn}
    We say that a gravitational instanton $(X,\omega,\Omega)$ is of type $\mathrm{ALG}(\beta,\tau,L)$ (or $\mathrm{ALG}$ for simplicity) if the Calabi ansatz $(X_{\mathcal{C}},\omega_{\mathcal{C}},\Omega_{\mathcal{C}})$ in Definition \ref{def: ALH*} is replaced by 
    \begin{equation*}
    (X_{mod}(\beta,\tau,L,R),\omega_{mod},\Omega_{mod})~\mbox{for some}~R>0.
    \end{equation*} 
    We also define marked $\mathrm{ALG}$ gravitational instanton similar to Definition \ref{def: marked ALH*}. We will denote the set of marked $\mathrm{ALG}(\beta,\tau,L)$ gravitational instantons by $\mathrm{mALG}(\beta,\tau,L)$. We will define (marked) $\mathrm{ALG}^{\ast}(\nu,\epsilon)$ gravitational instantons and $\mathrm{mALG}^{\ast}(\nu,\epsilon)$ similarly. 
  \end{defn}
 \begin{rk}
 	Here the definition seems different from the one in \cite{CVZ2} a priori. However, the definitions of $\mathrm{ALG}$ gravitational instantons are equivalent from \cite{Hein}*{(3.10)} and different type of $D$ corresponds different value of choice of $\beta$ in \cite{CVZ2}, which is a discrete parameter. The definitions of $\mathrm{ALG}^{\ast}$ gravitational instantons are equivalent by \cite{CV}*{Proposition 2.3}.
 \end{rk}

With the above definition, there are natural invariants of the $\mathrm{ALG}$ gravitational instantons given by the cohomology classes of the hyperK\"ahler triple. The set of possible cohomology classes are called the period domain of the gravitational instantons. In the cases of $\mathrm{ALG}$ and $\mathrm{ALG}^{\ast}$ gravitational instantons, the period domains are described by Chen--Viaclovsky--Zhang \cite{CVZ2}. For the $\mathrm{ALG}$ case, we first fixed $\beta,\tau,L$ and a reference $\mathrm{ALG}(\beta,\tau,L)$ gravitational instanton $(X_0,\omega_0,\Omega_0)$. The period domain $\mathcal{P}\Omega(\beta,\tau,L)$ is a subset of $\mathrm{H}^2(X_0,\mathbb{R})\times \mathrm{H}^2(X_0,\mathbb{C})$ consisting of pairs $([\omega],[\Omega])$ satisfying the following conditions:
\begin{enumerate}
	\item if $[C]\in \mathrm{H}_2(X_0,\mathbb{Z})$, $[C]^2=-2$, then $|[\omega]\cdot[C]|^2+|[\Omega]\cdot[C]|^2\neq 0$;
	\item $[\Omega]\cdot[F]=0$, where $[F]\in \mathrm{H}_2(X_0,\mathbb{Z})$ is the homology class of the elliptic fibre;
	\item $[\omega]\cdot [F]=L^2\operatorname{Im}(\tau)$. 
\end{enumerate}
The period domain $\mathcal{P}\Omega(\nu,\epsilon)$ of $\mathrm{ALG}^{\ast}(\nu,\epsilon)$ gravitational instantons are defined similarly except the last condition is replaced by $[\omega]\cdot [F]=\epsilon$. Then the period map of marked $\mathrm{ALG}(\beta,\tau,L)$ gravitational instanton is defined by 
\begin{align}
    \mathcal{P}(\beta,\tau,L)\colon \mathrm{mALG}(\beta,\tau,L)&\rightarrow \mathcal{P}\Omega(\beta,\tau,L) \\
     (X,\omega,\Omega,\alpha)  &\mapsto (\alpha^*[\omega],\alpha^*[\Omega]).
\end{align} We define the period map $\mathcal{P}(\nu,\epsilon)$ for $\mathrm{ALG}^{\ast}(\nu,\epsilon)$ gravitational instantons similarly.  

The goal of the section is to prove the surjectivity of the period maps of $\mathrm{ALG}$ and $\mathrm{ALG}^{\ast}$ gravitational instantons, conjectured by Chen--Viaclovsky--Zhang \cite{CVZ2}*{Conjecture 7.8}.
\begin{thm}\label{main2}
	The period maps $\mathcal{P}(\beta,\tau,L)$ and $\mathcal{P}(\nu,\epsilon)$ are surjective. 
\end{thm}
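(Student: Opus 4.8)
The plan is to run the same three-step scheme that proves Theorem \ref{main1}, now over the constructions of \S\ref{subsec:construction} and the existence theory for ALG and ALG* Ricci-flat metrics (Hein \cite{Hein}; Chen--Viaclovsky \cite{CV}). Fix an admissible type $\mathrm{Z}$ for the anti-canonical divisor $D$ and a period point $([\omega],[\Omega])\in\mathcal{P}\Omega(\beta,\tau,L)$ (resp. $\mathcal{P}\Omega(\nu,\epsilon)$), with reference instanton $(X_0,\omega_0,\Omega_0)$ whose $\mathrm{H}_{2}(X_0,\mathbb{Z})$ is identified, via the reference pair $(Y_{\mathfrak{r}},D_{\mathfrak{r}})$ of \S\ref{subsec:construction}, with the distinguished basis $\mathcal{B}_{\mathfrak{r}}$. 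The discrete datum $\beta$ (ALG) is read off from Table \ref{table: ALG}, and $\nu=k$ when $\mathrm{Z}=\mathrm{I}^{\ast}_{k}$ (ALG*). For ALG the modulus $\tau$ is also pinned down by $\mathrm{Z}$, except when $\mathrm{Z}=\mathrm{I}^{\ast}_{0}$, in which case it is a free continuous parameter that we will match below. Since condition (2) of the period domain says $[\Omega]$ lies precisely in the hyperplane $\{\sum_{i} a_{i}y_{i}=0\}$ cut out by the fibre class, Theorem \ref{thm: surj (2,0)-form} provides a marked ALG (resp. ALG*) pair $(Y,D)$ of type $\mathrm{Z}$, a diffeomorphism $\mu\colon X_0\to X:=Y\setminus D$, and a meromorphic $2$-form $\Omega_{Y}$ on $Y$ with a simple pole along $D$ normalized as in Remark \ref{rk:std}, with $\mu^{\ast}[\Omega_{Y}|_{X}]=[\Omega]$; in the $\mathrm{I}^{\ast}_{0}$ case we additionally tune the modulus of $D$ in the construction of \S\ref{subsubsec:type-i0*-construction} so that the asymptotic fibre has the prescribed modulus $\tau$.

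Next I would move the Kähler class into the correct chamber. Write $[\omega_{X}]:=(\mu^{-1})^{\ast}[\omega]$. Every holomorphic curve $C$ in $X$ is a smooth rational $(-2)$-curve of $Y$ (adjunction, since $C\cdot D=0$), and $\int_{C}\Omega_{Y}=0$ by type; thus condition (1) forces $[\omega_{X}]\cdot[C]\neq 0$ for every $(-2)$-curve of $Y$ contained in $X$. If moreover $[\omega_{X}]$ is positive on all of them and restricts from a Kähler class of $Y$ --- the ALG/ALG* analogue of condition $(\dagger)$ --- we may pass directly to the existence step. Otherwise, by Proposition \ref{prop:monodromy} each root reflection attached to a $(-2)$-curve in $X$ is realized by a monodromy of a deformation of $(Y,D)$, and by \cite{Dol}*{Theorem 2.1} the group they generate has a fundamental chamber consisting of classes positive on all such curves; hence some composition of these monodromies yields a diffeomorphism $g\colon X\to X$ with $g^{\ast}[\omega_{X}]$ satisfying $(\dagger)$. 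Each factor of $g$ is a Picard--Lefschetz reflection in a $(-2)$-class $[C]$ with $\int_{C}\Omega_{Y}=0$ and $[C]\cdot[F]=0$ (components of fibres pair trivially with the fibre class $[F]$); therefore $g^{\ast}[\Omega_{Y}|_{X}]=[\Omega_{Y}|_{X}]$ and $g^{\ast}[\omega_{X}]\cdot[F]=[\omega_{X}]\cdot[F]$, so conditions (2) and (3) are preserved.

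Finally I would invoke the existence theorem for ALG and ALG* gravitational instantons: for the pair $(Y,D)$ of type $\mathrm{Z}$ and any class $[\omega']$ on $X$ satisfying $(\dagger)$ with $[\omega']\cdot[F]=L^{2}\operatorname{Im}(\tau)$ (resp. $=\epsilon$), there is a complete Ricci-flat Kähler metric $\omega$ on $X$ with $[\omega]=[\omega']$ and $2\omega^{2}=\Omega\wedge\bar{\Omega}$, where $\Omega$ is the meromorphic form with residue normalized as in Remark \ref{rk:std}, and $(X,\omega,\Omega)$ is ALG of type $(\beta,\tau,L)$ (resp. ALG* of type $(\nu,\epsilon)$), asymptotic to $X_{mod}(\beta,\tau,L,R)$ (resp. $X^{\ast}_{mod}(\nu,\epsilon,R)$) for some $R>0$. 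Applying this with $[\omega']=g^{\ast}[\omega_{X}]$ and marking $g^{-1}\circ\mu\colon X_0\to X$ produces a marked gravitational instanton of the required type with period $([\omega],[\Omega])$, giving surjectivity. For $\mathrm{Z}=\mathrm{II}^{\ast}$ and $\mathrm{Z}=\mathrm{I}^{\ast}_{4}$ one has $\operatorname{rank}\mathrm{H}_{2}(X,\mathbb{Z})=1$, generated by $[F]$, so the period of $\Omega$ vanishes identically and the statement reduces to exhibiting the (essentially unique) such instanton, already furnished by the construction of \S\ref{subsec:construction}.

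The main obstacle I anticipate is the parameter bookkeeping glued to the analytic input: reconciling the residue normalization of $\Omega$ fixed once and for all in Remark \ref{rk:std} with the metric scale $L$ (resp. $\epsilon$) through the constraint $[\omega]\cdot[F]=L^{2}\operatorname{Im}(\tau)$ (resp. $=\epsilon$), and --- only in the case $\mathrm{Z}=\mathrm{I}^{\ast}_{0}$ --- arranging the modulus of $D$ to realize the prescribed $\tau$ while preserving all the properties established in \S\ref{subsubsec:type-i0*-construction} (a smooth member in the pencil, a distinguished basis, and surjectivity of the $(2,0)$-period onto the fibre hyperplane). The hard analytic facts --- existence of the Ricci-flat metric and the identification of its asymptotic model --- are quoted from Hein \cite{Hein} and Chen--Viaclovsky \cite{CV} rather than reproved here.
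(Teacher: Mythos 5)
Your proposal is correct and is essentially the paper's own argument: the proof of Theorem \ref{main2} given in the paper consists precisely of running the scheme of Theorem \ref{main1} with Theorem \ref{thm: surj (2,0)-form} and Theorem \ref{thm: Hein} substituted for Theorem \ref{thm:surjectivity-of-periods} and Theorem \ref{TY}, combined with the monodromy argument of Proposition \ref{prop:monodromy} and \cite{Dol}. The only step you should cite explicitly is Lemma \ref{lifting}, which is what carries you from ``$g^{\ast}[\omega_X]$ is positive on every holomorphic curve in $X$'' to the existence of a K\"ahler class on $Y$ restricting to it, i.e.\ to the hypothesis actually needed for Theorem \ref{thm: Hein}.
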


  Similar to the $\mathrm{ALH}^{\ast}$ gravitational instantons, we have the following 
  uniformization results for $\mathrm{ALG}$ and $\mathrm{ALG}^{\ast}$ gravitational instantons.
  \begin{thm}[{\cite{CC1}*{Theorem 1.2} and \cite{CV}*{Theorem 1.5}}]
  	 Any $\mathrm{ALG}$ (or $\mathrm{ALG}^{\ast}$) gravitational instanton can be compactified 
	 to a rational elliptic surfaces by adding a singular fibre of finite monodromy (or of type $\mathrm{I}_{\nu}^*$). 
  \end{thm}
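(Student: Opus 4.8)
The plan is to realize the gravitational instanton $(X,\omega,\Omega)$ as the complement of a single singular fibre in an elliptic fibration, in three stages: construct a global holomorphic elliptic fibration extending the model fibration at infinity, compactify by filling in the fibre at infinity via Kodaira's theory of degenerations, and identify the resulting compact surface as a rational elliptic surface. To begin, I would endow $X$ with the complex structure for which $\Omega$ is the holomorphic $(2,0)$-form, so that $X$ becomes an open complex surface. By the defining asymptotics there is a diffeomorphism $F\colon X_{mod}\cong X\setminus K$ under which $\omega$ and $\Omega$ converge to $\omega_{mod}$, $\Omega_{mod}$ at a definite rate. On the model end the function $z:=u^{1/\beta}$ (resp.~$u^{1/2}$ in the $\mathrm{ALG}^{\ast}$ case) is genuinely holomorphic with torus level sets, so $F$ transports $z$ to an \emph{approximately holomorphic} proper map on the end of $X$. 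The heart of the matter is to promote this to an honest proper holomorphic map $\pi\colon X\to B$ onto a Riemann surface whose generic fibres are smooth elliptic curves.

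The construction of this global base coordinate is where I expect the main difficulty. The strategy is to set $f:=\chi\cdot F_{\ast}(z)$ with $\chi$ a cutoff supported near infinity, and to correct it by solving $\bar\partial w=-\bar\partial f$ with weighted $L^2$ estimates. Here the completeness of $X$, the $L^2$ bound on the curvature, and the precise decay of the hyperK\"ahler structure to the model are exactly the inputs needed to run a H\"ormander-type estimate and to force the correction $w$ to decay at infinity, so that $\pi:=f+w$ is holomorphic, proper, and asymptotic to the model fibration. One must then verify that $\pi$ has connected elliptic fibres and develops no unexpected critical behaviour outside a compact set: the decay forces the fibres near infinity to be graphs over the model tori, while properness confines any degenerate fibres to a compact part of the base. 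This analytic step, rather than the subsequent algebraic geometry, is the genuine obstacle.

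With $\pi$ in hand, the image of the end is a punctured disc $\Delta^{\ast}$ in the base, and the monodromy of the elliptic family around the puncture is read off from the model gluing. In the $\mathrm{ALG}$ case the identification $(u,v)\sim(e^{2\pi i\beta}u,e^{-2\pi i\beta}v)$ produces a monodromy of finite order determined by $\beta$; in the $\mathrm{ALG}^{\ast}$ case the logarithmic shear $v\mapsto v+\tfrac{\nu}{\pi i}\log u$ combined with the order-two identification of the $u^{1/2}$-fibration yields precisely the monodromy of type $\mathrm{I}^{\ast}_{\nu}$. By Kodaira's classification there is a unique singular fibre $D$ realizing this monodromy, and adjoining it compactifies the fibration to $\bar\pi\colon\bar X=X\cup D\to\mathbb{P}^{1}$, with $D$ sitting over the point at infinity and the interior base compactified to $\mathbb{P}^{1}$.

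Finally, to identify $\bar X$ as a rational elliptic surface I would verify the numerical invariants. Gauss--Bonnet together with the $L^2$ curvature bound gives $\chi_{\mathrm{top}}(\bar X)=12$; the form $\Omega$ extends to a meromorphic $2$-form with a simple pole along $D$, whence $p_g(\bar X)=0$; and $b_1(X)=0$ forces $q(\bar X)=0$. A relatively minimal elliptic surface with $\chi_{\mathrm{top}}=12$ and $q=p_g=0$ is exactly a rational elliptic surface, and the existence of an elliptic fibration over $\mathbb{P}^1$ admitting a section (supplied by the zero of each model torus, which is monodromy-invariant and hence descends to a section over $\Delta^{\ast}$ that extends across $D$) makes $\bar X$ projective and matches the convention for rational elliptic surfaces. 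This exhibits $X=\bar X\setminus D$ as the complement of a singular fibre of the asserted type and completes the uniformization.
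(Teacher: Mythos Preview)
The paper does not prove this theorem at all: it is quoted verbatim as \cite{CC1}*{Theorem 1.2} and \cite{CV}*{Theorem 1.5} and used as a black box, with no argument supplied. So there is nothing in the paper to compare your proposal against.

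Your outline does follow the general shape of the original Chen--Chen and Chen--Viaclovsky arguments (build a holomorphic fibration asymptotic to the model, fill in the Kodaira fibre at infinity, then use surface classification), and the identification of the monodromy from the model is correct. A couple of points would need tightening if you actually carried this out. First, the sentence ``Gauss--Bonnet together with the $L^{2}$ curvature bound gives $\chi_{\mathrm{top}}(\bar X)=12$'' is not how this step goes: the $L^{2}$ curvature hypothesis does not by itself pin down the Euler number of the compactification; rather one computes $\chi_{\mathrm{top}}(X)$ from the model end (or from the known diffeomorphism type) and adds $\chi_{\mathrm{top}}(D)$ from Kodaira's table, and the identification with a rational elliptic surface is then via Castelnuovo's criterion or the classification of relatively minimal elliptic surfaces with the resulting invariants. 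Second, the section you propose (``the zero of each model torus'') lives only over the end; extending it to a global holomorphic section of $\pi$ over all of $B$ is not automatic and is not needed anyway---rationality of $\bar X$ follows from the numerical invariants without first exhibiting a section. These are refinements rather than fatal gaps, but since the paper offers no proof there is no ``paper's approach'' to match.
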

 \begin{rk}
 	From the Persson's classification of singular configurations in rational elliptic surfaces \cite{Per}, 
	one can only have $\nu\leq 4$ for $\mathrm{ALG}^{\ast}$ gravitational instantons \cite{CV}.  
 \end{rk}

Therefore, we will follow the method similar to the proof of the surjectivity of the period map for $\mathrm{ALH}^{\ast}$ gravitational instantons to prove Theorem \ref{main2}. We already proved the surjectivity of the $(2,0)$-form for rational elliptic surfaces with a prescribed fibre with finite monodromy or of type $\mathrm{I}_{\nu}^{\ast}$ in Theorem \ref{thm: surj (2,0)-form}, and we will later prove that every cohomology class of the complement of the prescribed fibre in the rational elliptic surface can support a Ricci-flat metric up to monodromy (see Theorem \ref{thm: Hein} and Lemma \ref{lifting}).

\subsection{Surjectivity of the period maps for \texorpdfstring{$\mathrm{ALG}$}{ALG} and \texorpdfstring{$\mathrm{ALG}^{\ast}$}{} gravitational instantons}
\label{subsec:sur-alg-alg*}
   We next modify a theorem of Hein \cite{Hein}*{Theorem 1.3}. Let $Y$ be a rational elliptic surface with a fibre $D$ of finite order monodromy. Denote by $X=Y\setminus D$ and by $p\colon X\rightarrow B\cong\mathbb{C}$ the restriction of the elliptic fibration structure from $Y$. We fix a holomorphic coordinate $u$ on a neighborhood of the base such that the singular fibre $D$ is located at $u=0$. Finally, let $U_{r}=\{u\in B~|~|u|<r\}$. 
\begin{thm} \label{thm: Hein}
	Let $\omega_0$ be any K\"ahler metric on $X=Y\setminus D$ such that $\int_{X}\omega_0^2<\infty$\footnote{For our purpose, we will only take those K\"ahler forms on $Y$ and restrict to $X$.}. Given $\alpha>0$, there exists a Ricci-flat metric $\omega$ such that $[\omega]=[\omega_0]$ and $\omega^2=\alpha\Omega\wedge \bar{\Omega}$ for a fixed meromorphic volume form $\Omega$ with a simple pole along $D$. Moreover, one has 
	\begin{align*}
	   \| \nabla^k(\omega-\omega_{mod})\|_{g_{mod}}\lesssim O(r^{-k-2})
	\end{align*} for any $k\in \mathbb{N}$. 
\end{thm}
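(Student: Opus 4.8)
The plan is to adapt Hein's construction \cite{Hein}*{Theorem 1.3} to the situation in which the K\"ahler class $[\omega_{0}]$ and the normalization $\alpha$ are prescribed in advance (recall that, by the footnote, we may assume $[\omega_{0}]=[\omega_{Y}]|_{X}$ for a K\"ahler form $\omega_{Y}$ on $Y$). First I would build a background K\"ahler metric $\omega_{b}$ on $X=Y\setminus D$ lying in the class $[\omega_{0}]$ and coinciding with the model metric $\omega_{mod}$ outside a compact set; then I would correct $\omega_{b}$ to a Ricci-flat metric by solving a complex Monge--Amp\`ere equation on the complete manifold $(X,\omega_{b})$ with right-hand side $\alpha\,\Omega\wedge\bar\Omega$; and finally I would bootstrap the resulting potential against the model geometry to obtain $\|\nabla^{k}(\omega-\omega_{mod})\|_{g_{mod}}\lesssim r^{-k-2}$.

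\emph{The background metric.} Over a small punctured disc $\{0<|u|<r\}$ in the base, the elliptic fibration $p$ restricts to a torus fibration, and the semi-flat construction -- a fibrewise-flat metric built from the periods of $\Omega$ and normalized so that each fibre has area $[\omega_{0}]\cdot[F]$ -- reproduces exactly $\omega_{mod}$, whose parameters ($L,\tau,\beta$ in the $\mathrm{ALG}$ case; $\nu,\epsilon$ in the $\mathrm{ALG}^{\ast}$ case) are thereby pinned down by $[\omega_{0}]\cdot[F]$, the monodromy type of $D$, and the residue normalization of $\Omega$, and which satisfies $\omega_{mod}^{2}=\alpha\,\Omega\wedge\bar\Omega$ on this neighbourhood up to lower-order terms. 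After replacing $\omega_{Y}$ by a cohomologous K\"ahler form which is ``fibred'' near $D$, the forms $\omega_{0}|_{V}$ and $\omega_{mod}$ are cohomologous closed $(1,1)$-forms on $V:=p^{-1}(\{0<|u|<r\})$ -- their classes agree on the fibre class, which carries the relevant part of $H^{2}$ of the end -- so a $\partial\bar\partial$-argument produces $\psi$ with $\omega_{0}=\omega_{mod}+i\partial\bar\partial\psi$ on $V$, with controlled $g_{mod}$-growth of $\psi$ and its covariant derivatives. Taking a cutoff $\chi$ equal to $1$ near $D$ and $0$ on the compact core and shrinking $r$ so that $i\partial\bar\partial(\chi\psi)$ stays small in the gluing annulus, $\omega_{b}:=\omega_{0}-i\partial\bar\partial(\chi\psi)$ is a K\"ahler metric on $X$ with $[\omega_{b}]=[\omega_{0}]$, equal to $\omega_{mod}$ on a smaller deleted neighbourhood of $D$ and equal to $\omega_{0}$ on the compact core.

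\emph{The Monge--Amp\`ere step and the main obstacle.} On $(X,\omega_{b})$ one solves
\begin{equation*}
(\omega_{b}+i\partial\bar\partial\varphi)^{2}=e^{F}\omega_{b}^{2},\qquad
F:=\log\!\left(\frac{\alpha\,\Omega\wedge\bar\Omega}{\omega_{b}^{2}}\right),
\end{equation*}
so that $\omega:=\omega_{b}+i\partial\bar\partial\varphi$ is Ricci-flat with $\omega^{2}=\alpha\,\Omega\wedge\bar\Omega$. By construction $F$ vanishes near $D$ up to lower-order terms, hence decays polynomially on the end, at rate $O(r^{-2})$ reflecting both the mismatch between $\omega_{Y}|_{X}$ and the semi-flat model and the residue-level comparison of $\Omega$ with the model form; moreover $(X,\omega_{b})$ has bounded geometry and satisfies the volume-growth and Sobolev-type hypotheses of \cite{Hein}, since it agrees with $\omega_{mod}$ outside a compact set. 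Therefore Hein's existence theory for the complex Monge--Amp\`ere equation on such complete surfaces applies and produces $\varphi\in C^{\infty}(X)$ tending to $0$ at infinity together with weighted estimates on all derivatives; in particular $\omega$ is positive, complete, and lies in $[\omega_{0}]$. The analytic core here is imported from \cite{Hein}; I expect the real obstacle to be the part genuinely specific to this statement, namely verifying that an \emph{arbitrary} class $[\omega_{0}]$ from $Y$ and an \emph{arbitrary} $\alpha>0$ do fit into Hein's framework -- that is, the background construction above together with the identification of the resulting $\omega_{mod}$ -- and tracking the $O(r^{-2})$ decay of $F$ carefully enough to extract the exact decay order in the last step.

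\emph{The decay estimate.} On the end $\omega-\omega_{mod}=i\partial\bar\partial\varphi$, and the Monge--Amp\`ere equation linearizes to an elliptic equation whose leading term is the $g_{mod}$-Laplacian of the torus-fibred model end. Feeding the $O(r^{-2})$ decay of $F$ into the weighted Schauder estimates of \cite{Hein} -- the spectral decomposition along the torus fibres showing that no slower-decaying homogeneous solution intervenes -- one bootstraps to $\|\nabla^{k}(\omega-\omega_{mod})\|_{g_{mod}}=\|\nabla^{k}i\partial\bar\partial\varphi\|_{g_{mod}}\lesssim r^{-k-2}$ for every $k\in\mathbb{N}$, as asserted. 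Together with Theorem \ref{thm: surj (2,0)-form} and the monodromy reduction (Lemma \ref{lifting}), this yields Theorem \ref{main2}, in parallel with the $\mathrm{ALH}^{\ast}$ case.
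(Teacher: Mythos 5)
Your overall architecture (glue a background metric in the class $[\omega_0]$ to the model near $D$, solve a complex Monge--Amp\`ere equation, bootstrap decay) is the same as the paper's, which follows Hein's construction of $\omega_a$ from \cite{Hein}*{Eq.~(3.25)}. But there is a genuine gap at the Monge--Amp\`ere step: you write $(\omega_b+i\partial\bar\partial\varphi)^2=e^F\omega_b^2$ and assert that ``Hein's existence theory applies,'' without addressing the \emph{integrability condition}
\begin{equation*}
\int_X\bigl(\omega_b^2-\alpha\,\Omega\wedge\bar\Omega\bigr)=0 .
\end{equation*}
Since $\omega_b^2=\alpha\,\Omega\wedge\bar\Omega$ on the end, the integrand is compactly supported and the integral is a finite number that has no reason to vanish for an arbitrary prescribed class $[\omega_0]$ and an arbitrary $\alpha>0$; yet Tian--Yau's existence theorem (\cite{TY}*{Theorem 1.1}, which is what the paper invokes) requires exactly this mass balance, and without it one cannot obtain a solution that both stays in $[\omega_0]$ and decays to the model. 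This is not a technicality you can absorb into ``Hein's framework'': it is the one step of the proof that is genuinely specific to the statement, and the paper spends most of its argument on it --- perturbing $\omega_a$ by a pulled-back form $\beta_{r,s}=\tfrac12\chi(|u|)f(|u|)\,\mathrm{d}u\wedge\mathrm{d}\bar u$ supported in an annulus of the base (so the K\"ahler and Bott--Chern classes are unchanged), observing that $\int_X\omega_a\wedge\beta_{r,s}\to\infty$ as $r\to0$, and applying the intermediate value theorem to find $t'\in[-1,1]$ for which $\omega_a+t'\beta_{r,s}$ satisfies the integrability condition. Your proposal has no mechanism playing this role.

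A secondary point: your final decay estimate is claimed uniformly for all fibre types, but the paper only gets the rate $r^{-k-2}$ from \cite{Hein}*{Proposition 2.9} when $D$ is of type $\mathrm{II}$, $\mathrm{III}$, $\mathrm{IV}$, or $\mathrm{I}_\nu^{\ast}$; for $\mathrm{II}^{\ast}$, $\mathrm{III}^{\ast}$, $\mathrm{IV}^{\ast}$ it has to appeal to \cite{CV}*{Proposition 5.1} instead. Your weighted-Schauder sketch glosses over this case distinction.
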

\begin{proof}
	From \cite{Hein}*{Eq.~(3.25)}, Hein constructed a background K\"ahler form $\omega_a$ on $X$ such that 
	\begin{enumerate}
		\item $[\omega_a]=[\omega_0]\in \mathrm{H}^2(X,\mathbb{R})$. 
		\item There exists $0<r_1<r_2$ such that 
		\begin{itemize}
			\item $\omega_a=\omega_0$ in $U_{r_2}^{c}$. 
			\item $\omega_a=T^*\omega_{sf,\epsilon}(\alpha)$ on $U_{r_1}$, where $T$ is the fibrewise translation by a holomorphic section over $U_{r_1}$. In particular, $\omega_a^2=\alpha\Omega\wedge\bar{\Omega}$ on $p^{-1}(U_{r_1})$. 
		\end{itemize}      
	\end{enumerate}
	We will modify $\omega_a$ such that it satisfies the integrability condition 
	\begin{align*}
	\int_{X}\omega_a^2-\alpha\Omega\wedge \bar{\Omega}=0. 
	\end{align*}
	For $0<r<s<r_1$, we define $\beta_{r,s}$ to be a $2$-form on $B$ such that $\beta_{r,s}=\frac{1}{2}\chi(|u|)f(|u|)\mathrm{d}u\wedge \mathrm{d}\bar{u}$, where $\chi\colon\mathbb{R}_+\rightarrow \mathbb{R}_+$ is a cut-off function with $\chi(t)\in [0,1]$ such that $\chi(t)=1$ on $U_{s}\setminus U_{r}$ 
 and 
	\begin{align*}
	    f(t)= \begin{cases}
	    \displaystyle\frac{\nu |\log{t}|}{2\pi \epsilon t^4}, &\mbox{ if $D$ is of type $\mathrm{I}_{\nu}^*, \nu>0$}, \\
	    \\
	    \displaystyle 1/t^{4}, &\mbox{ if $D$ is of finite monodromy.}
	    \end{cases}
	\end{align*}
	
	By a direct calculation, we have $\omega_a\pm \beta_{r,s}$ which is again a K\"ahler form. Notice that by another straightforward calculation,
	$\int_{X}\omega_a\wedge \beta_{r,s}\rightarrow \infty$ as $r\rightarrow 0$. Thus we have 
	\begin{align*}
	\int_{X}(\omega_a+\beta_{r,s})^2-\alpha \Omega\wedge \bar{\Omega}\rightarrow \infty, \mbox{ for $r\to 0$},\\
	\int_{X}(\omega_a^2-\beta_{r,s})^2-\alpha \Omega\wedge \bar{{\Omega}}\rightarrow -\infty, \mbox{ for $r\to 0$}.
	\end{align*} Then there exists $t'\in [-1,1]$ such that $\omega_a+t'\beta_{r,s}$ achieves the integrability condition for some $r$ by intermediate value theorem.    	
	
	With the integrability condition, the existence of the Ricci-flat metric in the same cohomology class (actually in the same Bott--Chern cohomology class) is guaranteed by \cite{TY}*{Theorem 1.1}. 
	Then \cite{Hein}*{Proposition 2.9} provides the decay to the model metrics
		\begin{align*}
	\|\nabla^k(\omega-\omega_{mod})\|_{g_{mod}}\lesssim O(r^{-k-\mathfrak{n}})
	\end{align*} for any $k\in \mathbb{N}$. Here $\mathfrak{n}$ can be taken to be $2$ if $D$ is of type $\mathrm{II}$,
	$\mathrm{III}$, $\mathrm{IV}$, or $\mathrm{I}_{\nu}^{\ast}$ and the theorem is proved. For the case when $D$ is of the type $\mathrm{II}^{\ast}$, $\mathrm{III}^{\ast}$, or $\mathrm{IV}^{\ast}$, \cite{CV}*{Proposition 5.1} showed that there exists a gravitational instanton with hyperK\"ahler triple of the same cohomology class and the required asymptotic. 
\end{proof}	
To prove the surjectivity of the period map (Theorem \ref{main2}), we also need the following lemma.
\begin{lem} \label{lifting}
	Given $[\omega]\in \mathrm{H}^2(X,\mathbb{R})$ such that $[\omega]$ is positive on every holomorphic curve in $X$, then there exists a K\"ahler class $[\omega_Y]\in \mathrm{H}^2(Y,\mathbb{R})$ such that $[\omega_Y]|_X=[\omega]$. 
\end{lem}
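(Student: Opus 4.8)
\textbf{Proof proposal for Lemma \ref{lifting}.}
The plan is to show that the restriction map $\mathrm{H}^2(Y,\mathbb{R})\to \mathrm{H}^2(X,\mathbb{R})$ is surjective with kernel spanned by the components of $D$, and that the hypothesis on $[\omega]$ forces any preimage to be adjustable into the K\"ahler cone of $Y$. First I would record the exact sequence of the pair $(Y,D)$: since $X=Y\setminus D$ is the complement of an anti-canonical divisor in a rational elliptic surface, $\mathrm{H}^2(Y,\mathbb{R})\to\mathrm{H}^2(X,\mathbb{R})$ is surjective and its kernel is the span of the classes $[D_1],\ldots,[D_k]$ of the irreducible components of $D$ (this is dual to the injection $\mathrm{H}_2(X)\hookrightarrow\mathrm{H}_2(Y)$ onto $[D]^\perp$ used throughout \S\ref{subsec:construction}, together with $\mathrm{H}^1(D,\mathbb{R})=0$). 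So a lift exists as a real class; the content is to choose it \emph{K\"ahler}.

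Next I would invoke the numerical characterization of the K\"ahler cone on a rational elliptic surface (a projective surface): by the Nakai--Moishezon / Kleiman criterion a real class $[\omega_Y]$ is K\"ahler iff $[\omega_Y]^2>0$ and $[\omega_Y]\cdot[C]>0$ for every irreducible curve $C\subset Y$. Start from an arbitrary lift $\eta$ of $[\omega]$ and correct it by a combination $\eta + \sum_i t_i[D_i]$, which does not change the restriction to $X$. Every irreducible curve $C$ in $Y$ is either a component $D_i$ of $D$, or a curve meeting $X$; in the latter case $[\omega]\cdot[C_{|X}] = \eta\cdot[C] + \sum_i t_i ([D_i]\cdot[C])$, but on $X$ the class pairs with the curve $C\cap X$ and the hypothesis says this is positive. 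The curves entirely contained in $D$ are the finitely many $D_i$, and also any $(-2)$-curve in a reducible fibre is accounted for; for these I would use that $-K_Y=[D]$ is nef (it is an effective anti-canonical divisor which is a fibre of the elliptic fibration), together with a fixed ample class $[H]$ on $Y$, to push $\eta$ into the cone: for $\varepsilon$ small the class $[\omega_Y]:=\varepsilon[H] + \eta + \sum_i t_i[D_i]$ will satisfy the positivity on the finitely many boundary curves $D_i$ (choosing the $t_i$ so that $[\omega_Y]\cdot[D_i]>0$, which is possible since the intersection matrix of the fibre components, augmented by a section, is nondegenerate on the relevant subspace) while retaining positivity on all curves meeting $X$ by the hypothesis and the smallness of the perturbation.

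Concretely the cleanest route: write $[\omega_Y] = \pi^*[\omega]_{\text{some model}}$ is not available, so instead argue as follows. Pick any lift $\eta$. The curves on which $\eta$ might fail to be positive, or on which $\eta^2\le 0$ might occur, form a bounded region; by a standard openness argument (the K\"ahler cone is open and its closure is the nef cone, cut out by finitely many extremal rays on a surface of Picard rank $\le 10$), it suffices to find \emph{one} class in $\mathrm{H}^2(Y,\mathbb{R})$ restricting to $[\omega]$ that lies in the nef cone and is positive on each $D_i$; then perturb by $\varepsilon[H]$ to land in the open K\"ahler cone. Existence of such a nef lift: decompose $[\omega]$ as restriction of $\eta$ and note $[\omega]\cdot[C_{|X}]>0$ pins down the pairing of $\eta$ with all curves except those inside $D$; the freedom $\sum t_i[D_i]$ exactly lets us prescribe $\eta\cdot[D_i]$ for the components meeting the interior appropriately, using nondegeneracy of the restricted intersection form on $\langle[D_1],\ldots,[D_k]\rangle$ modulo its radical, which is controlled because $(Y,D)$ is log Calabi--Yau with $D$ of the listed Kodaira type.

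\textbf{Main obstacle.} The delicate point is the finitely many curves contained in $D$ (the components $D_i$ and, in the reducible-fibre cases, the $(-2)$-curves among them): the hypothesis only controls pairings with holomorphic curves \emph{in $X$}, and says nothing about $[\omega]\cdot[D_i]$, which is not even defined on $X$. One must use the ambient geometry of the rational elliptic surface --- nefness of $-K_Y=[D]$ (so $D$ is at the boundary of the nef cone, not in its exterior), existence of an ample class, and the structure of the intersection pairing on the fibre lattice --- to show the correction terms $t_i[D_i]$ can be chosen making $[\omega_Y]$ nef without destroying positivity on $X$-curves, and then an arbitrarily small ample perturbation makes it strictly K\"ahler. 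Verifying that the $t_i$ exist amounts to a finite linear-algebra check on the (negative semidefinite) intersection matrix of the singular fibre $D$, type by type; I expect this to be routine but it is where the actual work lies.
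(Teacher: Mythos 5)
Your overall frame (take any lift, correct it by $\sum_i t_i\,\mathrm{PD}([D_i])$, and then verify a Nakai--Moishezon/Demailly--Paun positivity criterion curve by curve) is the right one and matches the paper's strategy, but two steps as written would fail. First, you cannot add $\varepsilon[H]$ for an ample class $[H]$: any such perturbation changes the restriction to $X$, since $[H]|_X\neq 0$ in general, so the resulting class no longer lifts $[\omega]$. The only admissible corrections are combinations of the $\mathrm{PD}([D_i])$ (in particular multiples of $\mathrm{PD}([D])$ itself), and strict positivity must be achieved using only these. Second, and more seriously, your trichotomy of curves is off: an irreducible curve $C\subset Y$ meeting both $X$ and $D$ (equivalently, with $[C]\cdot[D]>0$) is \emph{not} controlled by the hypothesis, because $C\cap X$ is non-compact and $[C]\notin[D]^{\perp}$ does not define a class in $\mathrm{H}_2(X,\mathbb{Z})$; your sentence ``the hypothesis says this is positive'' is exactly where the argument breaks. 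Moreover there are infinitely many such curves (the sections of the elliptic fibration already give infinitely many $(-1)$-curves in general), so the appeal to ``finitely many extremal rays'' is also false for a rational elliptic surface.

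The paper closes both gaps as follows. For the components of $D$ it orders them along the tree structure of the dual complex and solves inductively for $a_i$ so that $\bigl([\omega_Y']+\sum_i a_i\,\mathrm{PD}([D_i])\bigr)\cdot[D_j]=\epsilon>0$ for $j<n$; the last component is then handled by the constraint $[\omega_Y']\cdot[D]=[\omega]\cdot[f]>0$, which holds because a smooth fibre is a compact holomorphic curve contained in $X$, so the hypothesis does apply to it. This is in the spirit of your ``linear algebra modulo the radical'' remark, and the radical constraint is precisely why the positivity on the fibre class is needed. For the curves with $[C]\cdot[D]>0$ it adds $t\,\mathrm{PD}([D])$ with $t\gg 0$ --- this restricts to zero on $X$ and pairs to zero with every $D_i$, so it disturbs nothing already arranged --- and invokes Borcea's description of the effective cone of a rational elliptic surface (extremal rays are rational curves accumulating only at $\mathbb{R}_{+}[D]$) to see that a single uniform $t$ suffices. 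You would need both of these ingredients, or substitutes for them, to complete your argument.
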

\begin{proof}
   From the dual of \eqref{eq:alg-alg*-es}, any two liftings of $[\omega]$ in $\mathrm{H}^2(Y,\mathbb{R})$ are differed by a linear combination of $\mbox{PD}([D_i])$. Recall that a cohomology class $[\omega_Y]\in \mathrm{H}^2(Y,\mathbb{R})$ is K\"ahler if it is positive on every holomorphic curve in $Y$ by \cite{DP}*{Theorem 0.1}.  Holomorphic curves in $Y$ are either those avoid $D$, those has positive intersection with $D$ or the components of $D$. 
   Choose any lifting $[\omega'_Y]\in \mathrm{H}^2(Y,\mathbb{R})$ of $[\omega]$ is positive on the holomorphic curves of the first kind. 
   
   For the case $D$ is not of type $\mathrm{IV}$, the dual intersection complex of $D$ is a tree. We choose a root and label the components of $D$ with respect to the partial ordering given by the distance to the root, say $D_1,\ldots, D_n$. In the case when $D$ is of type $\mathrm{IV}$, we will simply choose arbitrary labeling. Then we can inductive solve $a_i$ such that $\big([\omega'_Y]+\sum_{i} a_i\mbox{PD}([D_i])\big).[D_j]=\epsilon>0$ for $j=1,\ldots, n-1$. Since $[\omega'_Y].[D]>0$, we have $\big([\omega'_Y]+\sum_{i} a_i\mbox{PD}([D_i])\big).[D_{n}]>0$ by choosing $\epsilon$ small enough.  
   
   From \cite{Bo}*{Proposition 6.2}, the cone of effective curves of $Y$ is the convex hull of a set of extremal rays given by rational curves and possibly $[D]$, accumulating at most to $\mathbb{R}_+[D]$.    
   Since $[\omega'_Y]+\sum_i a_i\mbox{PD}([D_i])$ is positive on $[D]$, we have $[\omega_Y]=[\omega'_Y]+\sum_i a_i\mbox{PD}([D_i])+t\mbox{PD}([D])$ is also positive on the curves of the second kind for $t\gg 0$. Thus, $[\omega_Y]$ is a K\"ahler class we are looking for. 
\end{proof}

\begin{proof}[Proof of Theorem \ref{main2}]
   	The proof is similar to the proof of Theorem \ref{main1},
   	where Theorem \ref{thm: surj (2,0)-form}, and Theorem \ref{thm: Hein} are the replacements for Theorem \ref{thm:surjectivity-of-periods}, and Theorem \ref{TY} \cite{TY}.
\end{proof}	

Finally, we comment on a Torelli theorem of the pairs $(Y,D)$. It is known that the periods of the holomorphic $(2,0)$-form on $X=Y\setminus D$ determined the isomorphism class of the pair $(Y,D)$ when $D$ is an $\mathrm{I}_k$-fibre \cite{GHK} and when $D$ is smooth \cite{Ba} (see also Appendix \ref{app}). However, it seems that there is less study when $D$ has components with multiplicities. Here we take the advantage of the Torelli theorem of gravitational instantons of $\mathrm{ALG}$ or $\mathrm{ALG}^{\ast}$ \cite{CVZ2} and give an optimal result when $D$ is not reduced. 
\begin{prop}\label{inj}
	Assume that $D$ is of type $\mathrm{II}$, $\mathrm{III}$, $\mathrm{IV}$ or $\mathrm{I}^{\ast}_{\nu}$ with $\nu\in \{0,1,2,3,4\}$. Let $(Y_1,D)$ and $(Y_2,D)$ be two pairs of rational elliptic surfaces with prescribed singular fibres. Let $\Omega_i$ be the meromorphic $(2,0)$-form on $Y_i$ with as simple pole along $D$ with the residue of $\Omega_i$ being fixed and there exists a diffeomorphism $f\colon X_2\rightarrow X_1$ such that $f^*\Omega_1,\Omega_2$ have the same periods on $X$. Then there exists an isomorphism $(Y_1,D)\cong (Y_2,D)$ as pairs. 
\end{prop}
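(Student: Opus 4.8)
The plan is to upgrade the Torelli theorem for $\mathrm{ALG}$ and $\mathrm{ALG}^{\ast}$ gravitational instantons of Chen--Viaclovsky--Zhang \cite{CVZ2} to a statement about the pairs, by feeding in the existence results proved above. First I would realize each complement $X_i=Y_i\setminus D$ as a gravitational instanton. Since $D$ has type $\mathrm{II},\mathrm{III},\mathrm{IV}$ or $\mathrm{I}^{\ast}_{0}$ --- so $(Y_i,D)$ is an $\mathrm{ALG}$ pair --- or $\mathrm{I}^{\ast}_{\nu}$ with $1\le\nu\le 4$ --- so $(Y_i,D)$ is an $\mathrm{ALG}^{\ast}$ pair --- one may choose a K\"ahler class on $Y_1$ whose restriction $\kappa_1$ to $X_1$ satisfies the fibre-area normalization $\kappa_1\cdot[F]=L^2\operatorname{Im}\tau$ (resp.\ $=\epsilon$) of the corresponding model; Theorem \ref{thm: Hein} then produces a Ricci-flat metric $\omega_1$ on $X_1$ with $[\omega_1]=\kappa_1$ and $\omega_1^2=\alpha\,\Omega_1\wedge\overline{\Omega_1}$ for a prescribed $\alpha>0$, i.e.\ an $\mathrm{ALG}$ (resp.\ $\mathrm{ALG}^{\ast}$) gravitational instanton $(X_1,\omega_1,\Omega_1)$ whose $(2,0)$-form $\Omega_1$ is the given one, with the prescribed residue.

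Next I would transport the K\"ahler class across $f$ and correct it by a monodromy, exactly along the lines of the proof of Theorem \ref{main1}. After replacing $f$ by a diffeomorphism compatible with the elliptic fibrations and orientations if necessary (as is implicit in the period-map framework), the class $f^{\ast}[\omega_1]\in\mathrm{H}^2(X_2,\mathbb{R})$ lies in the positive cone and satisfies $f^{\ast}[\omega_1]\cdot[F]=[\omega_1]\cdot[F]$. By the chamber argument of \cite{Dol}*{Theorem 2.1}, after composing with a finite product $h\colon X_2\to X_2$ of reflections in holomorphic $(-2)$-curves of $X_2$ --- each realized, via Proposition \ref{prop:monodromy}, as the monodromy of a deformation of $(Y_2,D)$ --- the class $(f\circ h)^{\ast}[\omega_1]$ becomes positive on every $(-2)$-curve of $Y_2$ contained in $X_2$; since the only other complete curves in $X_2$ have class $[F]$, on which the pairing is already positive, the argument of Lemma \ref{lifting} supplies a K\"ahler class on $Y_2$ restricting to $(f\circ h)^{\ast}[\omega_1]$. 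A second application of Theorem \ref{thm: Hein}, with the same $\alpha$, then yields a gravitational instanton $(X_2,\omega_2,\Omega_2)$ with $[\omega_2]=(f\circ h)^{\ast}[\omega_1]$. The essential point is that the correction leaves the $(2,0)$-period fixed: every reflecting class is represented by a holomorphic curve in $X_2$, so $\Omega_2$ integrates to zero on it, hence $[\Omega_2]$ is orthogonal to each reflecting class and the Picard--Lefschetz formula gives $h^{\ast}[\Omega_2]=[\Omega_2]$; combined with the hypothesis $f^{\ast}[\Omega_1]=[\Omega_2]$ this gives $(f\circ h)^{\ast}[\Omega_1]=[\Omega_2]$.

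Setting $g:=f\circ h\colon X_2\to X_1$, the two instantons now carry the same periods of the full hyperK\"ahler triple under the marking $g$ (the $[F]$-normalizations agreeing by construction), so the (marked) Torelli theorem of \cite{CVZ2} produces an isometry $\Phi\colon X_2\to X_1$ with $\Phi^{\ast}\omega_1=\omega_2$ and $\Phi^{\ast}\Omega_1=\Omega_2$; in particular $\Phi$ preserves the complex structures, i.e.\ it is a biholomorphism $X_2\xrightarrow{\sim}X_1$. Finally I would extend $\Phi$ across infinity: it is an isometry for metrics asymptotic to the same $\mathrm{ALG}$ (resp.\ $\mathrm{ALG}^{\ast}$) model, hence it matches the natural elliptic fibrations near infinity, hence everywhere, and therefore extends to an isomorphism $Y_2\xrightarrow{\sim}Y_1$ carrying the fibre at infinity to the fibre at infinity; since the fibre types are the prescribed ones, this is the desired isomorphism $(Y_1,D)\cong(Y_2,D)$.

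The step I expect to be the main obstacle is the middle one: one must match the K\"ahler \emph{and} the $(2,0)$ periods under a single marking, which works only because the monodromy correction can be arranged to act trivially on $[\Omega]$ --- and this rests on the fact that, for these fibre types, the $(-2)$-classes that arise are represented by holomorphic (fibre-component) curves, on which $\Omega$ has vanishing period. It is also exactly here that the types $\mathrm{IV}^{\ast},\mathrm{III}^{\ast},\mathrm{II}^{\ast}$ must be excluded: for those the period map is genuinely non-injective, as \S\ref{subsubsec:type-ii*-construction} and \S\ref{subsubsec:type-iii*-construction} already exhibit non-isomorphic (extremal) rational elliptic surfaces with identical, vanishing periods. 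Three subsidiary checks are: that for the $\mathrm{I}^{\ast}_{0}$ case the prescribed residue and the matching $(2,0)$-periods force the two surfaces to have the same fibre modulus $\tau$, so that the instantons are genuinely of the same type; the consistency of the normalizations --- that Theorem \ref{thm: Hein} applies on both sides with a common $\alpha$ and a common fibre area, which holds because these data are carried across by $f$ from the instanton on $X_1$ --- and the extension of a biholomorphism of complements to an isomorphism of pairs, which follows from the uniqueness of the compactification of an $\mathrm{ALG}$ or $\mathrm{ALG}^{\ast}$ gravitational instanton to a rational elliptic surface with the prescribed singular fibre at infinity.
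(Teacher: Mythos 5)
Your proposal follows essentially the same route as the paper: produce Ricci--flat metrics on both complements whose K\"ahler classes correspond under the marking, invoke the Chen--Viaclovsky--Zhang Torelli theorem to upgrade $f$ to a biholomorphism of the complements matching the full hyperK\"ahler triples, and then extend to an isomorphism of pairs. Two sub-steps differ, both to your credit or at least without harm. First, you make explicit the monodromy correction: before Lemma \ref{lifting} can lift $f^{\ast}[\omega_1]$ to a K\"ahler class on $Y_2$ one must arrange positivity on the holomorphic $(-2)$-curves in $X_2$, which you do by composing with reflections realized via Proposition \ref{prop:monodromy} and checking, via Picard--Lefschetz, that this leaves $[\Omega_2]$ untouched; the paper's proof simply asserts the existence of $\omega_i$ with $f^{\ast}[\omega_1]=[\omega_2]$ and leaves this adjustment implicit (it is spelled out only in the proof of Theorem \ref{main1}). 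Second, for the extension across infinity the paper argues purely birationally --- $Y_1$ and $Y_2$ are birational, a common resolution gives two sequences of blow-downs which must undo each other because the boundary divisors are isomorphic --- whereas you argue metrically, using that the isometry is asymptotic to the model and hence respects the elliptic fibrations near infinity; both arguments are sound, the paper's being self-contained in algebraic geometry and yours leaning on the uniqueness of the compactification. Your closing remarks on why the $\mathrm{IV}^{\ast},\mathrm{III}^{\ast},\mathrm{II}^{\ast}$ types must be excluded and on the consistency of the normalizations match the paper's own remark following the proposition.
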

\begin{proof}
	 From Theorem \ref{thm: Hein} and Lemma \ref{lifting}, there exists Ricci-flat metrics $\omega_i$ on $X_i$ such that $f^*[\omega_1]=[\omega_2]$. Then by Torelli theorem of $\mathrm{ALG}$ (or $\mathrm{ALG}^{\ast}$) gravitational instantons \cite{CVZ2}, one may modify the diffeomorphism $f$ such that $f^*\omega_1=\omega_2$ and $f^*\Omega_1=\Omega_2$. In particular, $f$ is a biholomorphism and thus $Y_1$ and $Y_2$ are birational to each other. Therefore, there exist a compact complex surface $Y$ and birational morphisms $f_1\colon Y\rightarrow Y_1$ and $f_2\colon Y\rightarrow Y_2$ such that $f_i$ are compositions of sequences of simple blow-ups. Since $ Y_1\setminus X_1\cong Y_2\setminus X_2$ both biholomorphic to $D$, $f_1$ and $f_2$ must undo each other. In other words, $f\colon X_2\rightarrow X_1$ can be extended to a biholomorphism $Y_2\rightarrow Y_1$, sending the one boundary divisor isomorphically to another. 
\end{proof}

\begin{rk} \begin{enumerate}
		\item  Here the condition fixing the residue of $\Omega_i$ is the substitution of the normalization condition in \cite{Fr}*{p.~22}.
		\item The injectivity of the period map is only true when the metric is asymptotic to the model of order $2$ when $D$ is of 
		type $\mathrm{II}^{\ast}$, $\mathrm{III}^{\ast}$, or $\mathrm{IV}^{\ast}$ \cites{CC3,CV} and thus the argument of the proof for Proposition \ref{inj} breaks down in these cases. This is because that there are isotrivial degenerations of rational elliptic surfaces with such prescribed fibres. 
	\end{enumerate}
\end{rk}	

\appendix

\section{Torelli theorem for log Calabi--Yau surfaces} \label{app}
The following Torelli theorem is implicitly hidden in the work of \cites{M,Fr} and is known to experts. However, the authors cannot find the exact statement in the literature and so we include the proof here to make the article self-contained. 
\begin{thm} \label{Torelli log CY}
	Consider two pairs consisting of a weak del Pezzo surface\footnote{More generally it is true for successive blow-ups of $\mathbb{P}^2$ on a smooth irreducible anti-canonical divisor.} and a smooth anti-canonical divisor $(Y,D)$ and $(Y',D')$. Assume that there exists a deformation family of pairs  $(\mathcal{Y},\mathcal{D})\rightarrow B$ such that both $(Y,D)$ and $(Y',D')$ are fibres. Denote by $\mu\colon \mathrm{H}^2(X,\mathbb{C})\rightarrow \mathrm{H}^2(X',\mathbb{C})$ the isomorphism via some parallel transport, where $X=Y\setminus D$ and $X'=Y'\setminus D'$. 
	If there exist meromorphic volume forms $\Omega$  on $Y$ and $\Omega'$ on $Y'$, with simple poles along $D$ and $D'$ (respectively) such that $\mu([\Omega])=[\Omega']$, then there exists an isomorphism of pairs $f\colon (Y',D')\cong (Y,D)$\footnote{In general, $f^*$ and $\mu$ may differ by reflection of $(-2)$-curves and might not coincide.}   
\end{thm}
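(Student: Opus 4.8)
The plan is to recover the pair $(Y,D)$ from its periods in two stages: first to pin down the boundary elliptic curve $D$ from the residue of $\Omega$, and then to recover the surface from the positions, on $D$, of the points one blows up to build $Y$ --- positions that are precisely the data recorded by the periods of cycles supported over $[D]^{\perp}$. Throughout I would use that, since $(Y,D)$ and $(Y',D')$ lie in one deformation family of pairs, the parallel transport underlying $\mu$ extends to a lattice isometry $\tilde{\mu}\colon \mathrm{H}^{2}(Y,\mathbb{Z})\to\mathrm{H}^{2}(Y',\mathbb{Z})$ carrying $[D]$ to $[D']$ and $K_{Y}$ to $K_{Y'}$, and inducing $\mu$ after the identifications in \eqref{eq:homology-of-x}.

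For the first stage, let $\{\alpha,\beta\}$ be a symplectic basis of $\mathrm{H}_{1}(D,\mathbb{Z})$ transported through the family, and $\delta$ the Leray coboundary of \eqref{eq:delta-s1-bundles}. The residue formula gives $\int_{\delta(\alpha)}\Omega=\int_{\alpha}\operatorname{Res}_{D}\Omega$ and $\int_{\delta(\beta)}\Omega=\int_{\beta}\operatorname{Res}_{D}\Omega$, together with the analogous identities on $(Y',D')$; hence $\mu[\Omega]=[\Omega']$ forces the nowhere-vanishing holomorphic $1$-forms $\operatorname{Res}_{D}\Omega$ and $\operatorname{Res}_{D'}\Omega'$ to have matching periods on the transported bases. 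Since such a form on an elliptic curve is determined up to scale by its period ratio, and here the periods themselves agree, this yields a distinguished isomorphism $\phi\colon D\xrightarrow{\sim}D'$ with $\phi^{*}\operatorname{Res}_{D'}\Omega'=\operatorname{Res}_{D}\Omega$ compatible with the markings; in particular $D\cong D'$, which already settles the case $d=9$.

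For the second stage I would realize $Y$ as an iterated blow-up $\pi\colon Y\to\mathbf{P}^{2}$ at almost-general points $p_{1},\dots,p_{b}$ ($b=9-d$) lying on the smooth cubic $\bar{D}=\pi(D)$, with exceptional classes $E_{1},\dots,E_{b}$ and line class $H$ (the case $d=8'$ being the obvious variant over $\mathbf{P}^{1}\times\mathbf{P}^{1}$); fixing a flex $O\in\bar{D}$ identifies $\operatorname{Pic}^{0}(D)$ with $\mathbb{C}/\Lambda$, where $\Lambda$ is the period lattice of $\operatorname{Res}_{D}\Omega$. By the residue computation behind Lemma~\ref{lem:period-calculation} (gluing $S^{1}$-bundles over arcs in $D$, following the recipe in \cite{Fr}), the period of any lift $\gamma_{c}\in\mathrm{H}_{2}(X,\mathbb{Z})$ of a class $c\in[D]^{\perp}=K_{Y}^{\perp}$ equals, modulo $\Lambda$, the Abel--Jacobi value of the restriction $r_{Y}(c):=c|_{D}\in\operatorname{Pic}^{0}(D)$; for instance $r_{Y}(E_{i}-E_{j})=[p_{i}-p_{j}]$ and $r_{Y}(H-3E_{1})=-3[p_{1}-O]$. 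The same holds on $X'$, and since $\mu$ matches $\gamma_{c}$ with the cycle attached to $\tilde{\mu}(c)$ and $\mu[\Omega]=[\Omega']$, I would conclude $\phi_{*}\circ r_{Y}=r_{Y'}\circ\tilde{\mu}$ on $K_{Y}^{\perp}$: the two marked pairs have the same period point.

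The remaining, decisive step is reconstruction: the marked curve $D$ together with the homomorphism $r_{Y}\colon K_{Y}^{\perp}\to\operatorname{Pic}^{0}(D)$ determines $(Y,D)$ up to the Weyl group $W$ generated by reflections in the $(-2)$-classes of $K_{Y}^{\perp}$. This is the Torelli theorem for anticanonical pairs, implicit in \cite{M} and \cite{Fr} (and parallel, in the cycle case, to \cite{GHK}); concretely one runs the construction of Theorem~\ref{thm:surjectivity-of-periods} in reverse --- from $r_{Y}(E_{i}-E_{j})$ and $r_{Y}(H-3E_{1})$ one recovers $p_{1},\dots,p_{b}$ on $\bar{D}$ up to a translation by a $3$-torsion point (which is induced by an element of $\mathrm{PGL}(3,\mathbb{C})$, since such a translation fixes $3[O]$) and up to the Cremona transformations realizing $W$ --- and by the first stage $\phi$ carries the configuration for $(Y',D')$ to the same one, so an element of $\mathrm{PGL}(3,\mathbb{C})$ intertwines the two presentations and lifts to an isomorphism $f\colon (Y',D')\xrightarrow{\sim}(Y,D)$ taking $D'$ to $D$. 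On the free part of cohomology $f^{*}$ then agrees with $\tilde{\mu}$ up to the element of $W$ used to align the effective cones: when $Y$ carries no $(-2)$-curve this element is trivial, but in general $f^{*}$ and $\mu$ differ by a product of reflections in $(-2)$-curves, which is the content of the footnote. I expect this last step --- the reconstruction of the surface from its period point, including the bookkeeping of the $W$-ambiguity and the non-general-position (weak del Pezzo) cases --- to be the main obstacle; the first two stages only unwind the residue calculus already used in \S\ref{subsec:alh*-period}.
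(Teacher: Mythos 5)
Your proposal follows essentially the same route as the paper's proof: you identify the transcendental periods of $\Omega$ over lifts of classes in $[D]^{\perp}$ with the algebro-geometric period homomorphism $c\mapsto c|_{D}\in\operatorname{Pic}^{0}(D)$ via the residue/$S^1$-bundle computation, recover $D$ itself from the residue periods, and then invoke the McMullen--Friedman reconstruction of the pair from this homomorphism up to the Weyl group action --- which is exactly the paper's key lemma ($\phi_{\Omega}=\phi_{(Y,D)}$) followed by its appeal to Corollary 4.4 and Theorem 6.4 of \cite{M}. The argument is correct, modulo the same reliance on the literature for the reconstruction step and a lighter treatment of the $\mathbf{P}^1\times\mathbf{P}^1$/$\mathbb{F}_2$ case (which the paper handles by blowing up one further point to reduce to degree $7$).
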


First we review some lattice theory. Denote by $\mathbb{Z}^{1,n}$ the lattice generated by $h,e_1,\ldots, e_n$ with the pairing $h^2=1$, $h\cdot e_i=0$, and $e_i\cdot e_j=-\delta_{ij}$. Set $f=3h-\sum_i e_i$, $\alpha_0=e_0-e_1-e_2-e_3$, and $\alpha_i=e_i-e_{i+1}$. Let $\mathbb{L}_n\subseteq \mathbb{Z}^{1,n}$ be the sublattice generated by $\alpha_i$'s. If $Y$ is a blow-up of $\mathbf{P}^2$ at $n$ points, then $\operatorname{Pic}(Y)\cong \mathbb{Z}^{1,n}$. If $D$ is a smooth irreducible anti-canonical divisor of $Y$ and $\Lambda(Y,D)$ denotes the sublattice of $\operatorname{Pic}(Y)$ with zero pairing with $[D]$, then $\Lambda(Y,D)\cong \mathbb{L}_n$. 

Consider the data $(Y,D)$, $\Omega$, and a homology class $\delta \in \mathrm{H}_2(Y,\mathbb{Z})$ such that $\delta\cdot D=0$. From the long exact sequence \eqref{eq:long-exact-sequence}, we can find a representative $\tilde{\delta}$ of $\delta$ contained in $X$ and thus $\int_{\tilde{\delta}}\Omega$ is defined. Again from \eqref{eq:long-exact-sequence} and the residue theorem, we have 
 \begin{align*}
    \int_{\delta}\Omega:=\int_{\tilde{\delta}}\Omega\in \mathbb{C}
 \end{align*} is well-defined. In particular, the complex structure of $D$ is determined by $[\Omega]|_{\operatorname{Im}(\mathrm{H}^1(D,\mathbb{Z}))}$ from the residue formula. 
 The  meromorphic volume form $\Omega$ then determines the period map 
  \begin{align*}
    \phi_{\Omega}:\Lambda(Y,D)\cong \mathrm{H}_2(X,\mathbb{Z})/\operatorname{Im}\mathrm{H}^1(D,\mathbb{Z})  \rightarrow D\cong \operatorname{Pic}^0(D),
  \end{align*} which is similar to the period map of K3 surfaces. Notice that $\phi_{\Omega}$ is independent of the $\mathbb{C}^*$-scaling of $\Omega$. On the other hand, one can have another notion of period $\phi_{(Y,D)}$ in algebraic geometry similar to the one introduced in Gross--Hacking--Keel \cite{GHK},
   \begin{align*}
      \phi_{(Y,D)}\colon  \Lambda(Y,D)&\rightarrow \operatorname{Pic}^0(D) \\  
              L &\mapsto L|_D.
   \end{align*}
   \begin{lem}
   	  The two notions of periods coincide, i.e., $\phi_{\Omega}=\phi_{(Y,D)}$. 
   \end{lem}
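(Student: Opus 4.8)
The plan is to reduce the equality $\phi_{\Omega} = \phi_{(Y,D)}$ to a statement about the coboundary map on first cohomology of $D$, and then identify both sides via the residue formula. Both maps send a class $L\in\Lambda(Y,D)$ to an element of $\operatorname{Pic}^0(D)\cong D$. For $\phi_{(Y,D)}$ this element is literally $L|_D$; for $\phi_{\Omega}$ it is obtained by choosing a lift $\tilde L\in \mathrm{H}_2(X,\mathbb{Z})$ of the Poincar\'e dual of $L$ (possible since $L\cdot D=0$, using the exact sequence \eqref{eq:long-exact-sequence}) and recording $\int_{\tilde L}\Omega\in\mathbb{C}$ modulo the period lattice $\Lambda_{D}=\operatorname{Im}\big(\mathrm{H}^1(D,\mathbb{Z})\to\mathbb{C}\big)$ coming from the residue $\operatorname{Res}_D\Omega$. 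So the first step is to recall that the indeterminacy of the lift $\tilde L$ is exactly $\operatorname{Im}(\mathrm{H}^1(D,\mathbb{Z})\to\mathrm{H}_2(X,\mathbb{Z}))$, so $\int_{\tilde L}\Omega$ is well defined modulo $\int_{\mathrm{H}^1(D,\mathbb{Z})}\operatorname{Res}_D\Omega = \Lambda_D$, which is precisely the lattice defining $\operatorname{Pic}^0(D)=\mathbb{C}/\Lambda_D$.

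Next I would make the identification $\operatorname{Pic}^0(D)\cong D$ explicit on the algebraic side: writing $D=\mathbb{C}/\Lambda_D$ with the trivialization induced by $\operatorname{Res}_D\Omega$ (Abel--Jacobi), a degree-zero line bundle $\mathcal{O}_D(\sum n_i p_i)$ with $\sum n_i=0$ corresponds to $\sum n_i \big(\int_{O}^{p_i}\operatorname{Res}_D\Omega\big)\bmod\Lambda_D$. The key geometric input is then the same residue/Leray-coboundary computation already used in Lemma \ref{lem:period-calculation}: if $L$ is represented by a divisor class such as $H-3E_1$ or $E_i-E_j$, one realizes a lift $\tilde L$ in $X$ by gluing $S^1$-bundles (Leray coboundaries) over paths in $D$ joining the relevant intersection points with $\underline{D}$, and then $\int_{\tilde L}\Omega = \sum \int_{\sigma}\operatorname{Res}_D\Omega$ over those paths. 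This is exactly the Abel--Jacobi sum computing $L|_D$ in $\operatorname{Pic}^0(D)$ — the collinearity relations (three points on a line, etc.) that enter the algebraic computation of $L|_D$ are the same relations used to evaluate the period integral, cf.\ the proof of Lemma \ref{lem:period-calculation}.

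To organize this cleanly I would argue as follows. Both $\phi_\Omega$ and $\phi_{(Y,D)}$ are group homomorphisms $\Lambda(Y,D)\to\operatorname{Pic}^0(D)$, so it suffices to check they agree on a generating set of $\Lambda(Y,D)\cong\mathbb{L}_n$, e.g.\ on $\alpha_0=e_0-e_1-e_2-e_3$ and $\alpha_i=e_i-e_{i+1}$. For $\alpha_i=e_i-e_{i+1}$: on the algebraic side $\mathcal{O}_Y(E_i-E_{i+1})|_D = \mathcal{O}_D(x_{i+1}-x_i)$ where $x_i=E_i\cap D$, corresponding to $\int_{x_i}^{x_{i+1}}\operatorname{Res}_D\Omega$; on the analytic side the Leray coboundary of the path from $x_i$ to $x_{i+1}$ gives the same integral, matching Lemma \ref{lem:period-type-ii}(b) etc. For $\alpha_0 = H-E_1-E_2-E_3$: algebraically $\mathcal{O}_Y(H-E_1-E_2-E_3)|_D=\mathcal{O}_D(q_1+q_2+q_3-x_1-x_2-x_3)$ where $H$ meets $D$ at $q_1,q_2,q_3$ and $x_j=E_j\cap D$, and since $q_1+q_2+q_3$ is the hyperplane section this is $\mathcal{O}_D(-x_1-x_2-x_3)$ up to the fixed flex normalization; analytically the Leray coboundary over paths $x_1\to q_1$, $x_2\to q_2$, $x_3\to q_3$ reproduces this, again as in the computation in the proof of Lemma \ref{lem:period-calculation}. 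The main obstacle is bookkeeping the base points and orientations so that the two Abel--Jacobi descriptions literally coincide (as opposed to agreeing up to a sign or a fixed translation), and checking that the chosen generators indeed span $\mathbb{L}_n$; once the coboundary-equals-residue dictionary is set up carefully, the identity is forced on generators and hence everywhere, so $\phi_\Omega=\phi_{(Y,D)}$.
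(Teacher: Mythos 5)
Your core argument --- both maps are homomorphisms, so check them on generators of $\Lambda(Y,D)$, and match the algebraic restriction $L|_D$ with the period integral via the Leray coboundary ($S^1$-bundle over a path in $D$) and the residue formula --- is exactly the paper's proof for the main case, including the same generators $E_i-E_j$ and $H-E_i-E_j-E_k$ and the same collinearity bookkeeping. So for a pair $(Y,D)$ arising from blowing up $\mathbf{P}^2$ at \emph{distinct} points of a smooth cubic, your argument is the paper's argument.

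What you omit are the degenerate configurations that the lemma must also cover. First, weak del Pezzo pairs include blow-ups at infinitely near points; there the class $E_i-E_{i+1}$ is represented by a $(-2)$-curve and the two intersection points with $D$ coincide, so the ``path from $x_i$ to $x_{i+1}$'' computation does not literally apply. The paper handles this by a continuity argument: both $\phi_\Omega$ and $\phi_{(Y,D)}$ vary continuously in a family of pairs over a base $T$ whose generic member is a blow-up at distinct points, so the identity extends by specialization. Second, the cases $Y\cong\mathbf{P}^1\times\mathbf{P}^1$ (where $\Lambda(Y,D)$ is generated by $F_1-F_2$ for the two rulings, reduced to the previous case by choosing generic fibre representatives) and $Y\cong\mathbb{F}_2$ (where both periods vanish on the unique $(-2)$-curve) are not blow-ups of $\mathbf{P}^2$ in the marking you use and need separate, if easy, treatment. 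None of these is a serious obstacle, but as written your proof covers only the generic stratum of the moduli of pairs.
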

\begin{proof}
	 We first consider the case when $Y$ is obtained by blowing up of a smooth cubic 
	 \(\underline{D}\) at distinct points \(x_{1},\ldots,x_{b}\) 
	 Let $Y\rightarrow \mathbf{P}^2$ be the blow-up and $E_i$ the exceptional divisors. 
	 Denote by $D$ the proper transform of \(\underline{D}\) and $H$ the pullback of the hyperplane class on $\mathbf{P}^2$.  	
	 Then $\Lambda(Y,D)$ is spanned by elements of the form $E_i-E_j$ for and 
	 $H-E_i-E_j-E_k$.
	 It is easy to see that both $\phi_{\Omega}$ and $\phi_{(Y,D)}$ are linear and thus it suffices to prove the two coincide on the generators. 
	 Let $p=E_i\cap D, q=E_j\cap D$. Then $\phi_{(Y,D)}(E_j-E_i)=\mathcal{O}_D(q-p)$. On the other hand, one can find a smooth curve $\gamma$ in $D$ connecting $q$ and $p$ and denote by $C_{\gamma}\subseteq X$   the $S^1$-bundle over $\gamma$. One can glue $C_{\gamma}$ into the complement in $E_j-E_i$ of small discs around $p,q$ to obtain a $2$-cycle in $X$ which is homologous to $E_j-E_i$. Denote the $2$-cycle by $C_{ji}$.	 
	 Then one has 
	 \begin{align*}
	    \phi_{\Omega}(E_j-E_i)=\int_{C_{ji}}\Omega=\int_{\gamma}\mathrm{d}z=q-p=\phi_{(Y,D)}(E_j-E_i),
	 \end{align*} where the second equality comes from the residue and the last equality holds via the identification $D\cong \operatorname{Pic}^0(D)$.  Now we consider the case $Y$ is successive blow up (possibly infinitely near) points on $\mathbb{P}^2$ at the smooth cubic. Notice that give a family of pairs  $(Y_t,D_t)$ of successive blow up (possibly infinitely near) points on $\mathbb{P}^2$ at the smooth cubic over a parameter space $T$, the two periods $\phi_{\Omega}$ and $\phi_{(Y,D)}$ are both continuous with respect to $t\in T$. Since one can take $T$ such that generic points correspond to blow up of $\mathbf{P}^2$ at distinct points on a smooth cubic, this proves the lemma for the case when $Y$ is a blow up of $\mathbf{P}^2$.
 When $Y\cong \mathbf{P}^1\times \mathbf{P}^1$, $\Lambda(Y,D)$ is generated by $F_1-F_2$, where $F_i$ are the fibres of different rulings. By choosing the generic fibre representative, we may assume that $F_i$ intersect $D$ at $p_i,q_i$. Choose smooth curves $\gamma_p$ (and $\gamma_q$) connecting $p_1,p_2$ (and $q_1,q_2$ respectively) without intersection. Then the proof is reduced to the above case. 
	 When $Y\cong \mathbb{F}_2$, then $\Lambda(Y,D)$ is generated by the unique $(-2)$-curve and both periods simply vanish. 
\end{proof}	
\begin{proof}[Proof of Theorem \ref{Torelli log CY}] We will first prove the case when $Y$ is not isomorphic $\mathbf{P}^1\times \mathbf{P}^1$ nor $\mathbb{F}_2$. 
 The marking $\mu$ and the period $\phi_{(Y,D)}$ determines a homomorphism $\mathbb{L}_n\rightarrow  \operatorname{Pic}^0(D)$. Corollary 4.4 \cite{M} implies that it uniquely determines the a homomorphism $\mathbb{Z}^{1,n}\rightarrow \operatorname{Pic}(D)$.  Theorem 6.4 \cite{M} says that such a homomorphism recovers the blow up loci of $Y\rightarrow \mathbf{P}^2$ up to the Weyl group action and thus uniquely determines the pair $(Y,D)$ up to isomorphism. 
 
 Now we will consider the case $Y=Y'=\mathbf{P}^1\times \mathbf{P}^1$ and $[F_1],[F_2]$ are the homology classes of two rulings. Assume that $D, D'$ are smooth anti-canonical divisors in $Y=Y'=\mathbf{P}^1\times \mathbf{P}^1$ such that
  \begin{align*}
   \phi_{(Y,D)}([F_1]-[F_2])=\phi_{(Y',D')}([F_1]-[F_2]). 
  \end{align*}
From the group law on elliptic curves $D$ and $D'$,  there exist $p\in Y$, $p'\in Y'$ such that $\phi_{(\tilde{Y},\tilde{D})}=\phi_{(\tilde{Y}',\tilde{D}')}$, where $\tilde{Y}=\mbox{Bl}_{p}Y$, $\tilde{Y}'=\mbox{Bl}_{p'}Y'$ and $\tilde{D}, \tilde{D}'$ are the corresponding proper transforms. Notice that $\tilde{Y}\cong \tilde{Y}'$ are isomorphic to del Pezzo surface of degree $7$. From the previous part of the proof, we have the isomorphism of the pairs $(\tilde{Y},\tilde{D})\cong (\tilde{Y}',\tilde{D}')$. In a del Pezzo surface of degree $7$ there are three $(-1)$-curves and exactly one of them intersects the other two. Therefore, such $(-1)$-curve in $\tilde{Y}$ is identified with a corresponding $(-1)$-curve in $\tilde{Y}'$ via the isomorphism of the pairs $(\tilde{Y},\tilde{D})\cong (\tilde{Y}',\tilde{D}')$. Blowing down such $(-1)$-curves leads to the isomorphism of the pair $(Y,D)\cong (Y',D')$. 
 The proof of the case $\mathbb{F}_2$ is similar. 
\end{proof}

\end{document}